\newcommand\rurl[1]{%
  \href{https://#1}{\nolinkurl{#1}}%
}
\numberwithin{equation}{section}
\theoremstyle{plain}
\newtheorem{theorem}{Theorem}[section]
\newtheorem*{theorem*}{Theorem}
\theoremstyle{plain}
\newtheorem{corollary}[theorem]{Corollary}
\newtheorem*{corollary*}{Corollary}
\theoremstyle{plain}
\newtheorem{lemma}[theorem]{Lemma}
\newtheorem*{lemma*}{Lemma}
\theoremstyle{plain}
\newtheorem{proposition}[theorem]{Proposition}
\newtheorem*{proposition*}{Proposition}
\theoremstyle{definition}
\theoremstyle{remark}
\newtheorem*{remark}{Remark}
\theoremstyle{remark}
\theoremstyle{remark}
\theoremstyle{definition}
\newtheorem{example}[theorem]{Example}
\newtheorem*{example*}{Example}
\theoremstyle{plain}
\theoremstyle{definition}
\providecommand{\norm}[1]{\left\lVert #1 \right\rVert}
\providecommand{\abs}[1]{\left\lvert #1 \right\rvert}
\newcommand{\R}{\mathbb{R}}
\newcommand{\C}{\mathbb{C}}
\newcommand{\Z}{\mathbb{Z}}
\newcommand{\N}{\mathbb{N}}
\newcommand{\G}{\mathcal{G}}
\newcommand{\indicator}{\raisebox{2pt}{$\chi$}}
\newcommand{\V}{\mathcal{V}}
\renewcommand{\d}{\mu}  
\newcommand{\U}{\mathcal{U}}
\renewcommand{\P}{\mathcal{P}}
\newcommand{\PP}{\mathbf{P}}
\newcommand{\VV}{\mathbf{V}}
\newcommand{\y}{\xi}
\newcommand{\MM}{\mathbf{M}}
\newcommand{\mg}{\mathfrak{mg}}
\DeclareMathOperator{\supp}{supp}
\DeclareMathOperator{\spanSpace}{span}
\DeclareMathOperator{\id}{id}
\newcommand{\sis}{shift-invariant space}
\newcommand{\mgg}{\mg (X,\mu _X)}
\definecolor{light-gray}{gray}{0.93}
\newcommand\thankssymb[1]{\textsuperscript{\@fnsymbol{#1}}}
\begin{document}

\pagestyle{myheadings}

\title[Sampling with periodic exponential B-splines]{Sampling theorems with derivatives in shift-invariant spaces generated by periodic exponential B-splines}

\address{\thankssymb{1}Faculty of Mathematics, University of Vienna, Oskar-Morgenstern-Platz 1, 1090 Vienna, Austria}

\author[\qquad \qquad \qquad \qquad \quad Karlheinz \ Gr\"ochenig]{Karlheinz Gr\"ochenig \thankssymb{1} 
}
\email{karlheinz.groechenig@univie.ac.at}

\author[Irina Shafkulovska\qquad\qquad\qquad ]{Irina
  Shafkulovska\thankssymb{1}}
\email{irina.shafkulovska@univie.ac.at}
\thanks{K. Gr\"ochenig was supported by the Austrian Science Fund (FWF) project P31887-N32. I. Shafkulovska was funded by the Austrian Science Fund (FWF) project P33217 and by the Austrian Academy of Sciences through an internship at the Acoustics Research Institute.}
\keywords{Shift-invariant spaces, splines, Chebyshev B-splines,
  collocation matrix, Schoenberg-Whitney condition, nonuniform sampling, Gabor frames}
\subjclass[2010]{ 42C15 , 41A15,  94A20,  42C40}
\date{}

\begin{abstract}
We derive sufficient conditions for sampling with derivatives in shift-invariant spaces generated by a periodic exponential B-spline. The sufficient conditions are expressed with a new notion of measuring the gap between consecutive samples. These conditions are near optimal, and, in particular, they imply the existence of sampling sets with  lower Beurling density arbitrarily close to the necessary density.  
\end{abstract}

\maketitle

\section{Introduction}

By a sampling problem, we understand the question of whether and how a function in a given space can be recovered from discrete information, which is usually given by point evaluations (samples). The given space is an a priori signal model, and for a century this model has been the Paley-Wiener space of bandlimited functions. The significance of the
Paley-Wiener space and the associated   Shannon-Whittaker-Kotelnikov sampling theorem cannot be overestimated; 
it is at the origin of the  information theory by  Shannon, it  is the technical basis for analog/digital conversion in signal processing, and it informs a whole branch of complex analysis. From the point of view of mathematics, the decisive results are the necessary density conditions of Landau~\cite{Landau1967} and the sufficient conditions for sampling by Beurling~\cite{BeurlingCollected1989}. Their work remains the point of reference for the exploration of sampling problems in analysis and signal processing. See~\cite{AldroubiGroechenig2000,BF01,seip04} for some representative expositions of modern sampling theory.   

In this paper, we study sampling in a \sis\  generated by a periodic exponential $B$-spline (PEB-spline). Given a PEB-spline, or more
generally a generator  $\varphi $ subject to some mild decay
conditions, the \sis\ generated by $\varphi $ in $L^p(\R)$ is defined to be 
\begin{equation}
V^p(\varphi)\coloneqq\Big\lbrace f\in L^p(\R) : f(x) = \sum\limits_{\ell\in\Z} c_\ell \varphi (x-\ell ) ,\ (c_\ell)_{\ell\in\Z}\in\ell^p(\Z) \Big \rbrace \subseteq L^p(\R).
\end{equation}
If $\varphi (x) = \tfrac{\sin \pi x}{\pi x}$, then $V^2(\varphi)= \{ f\in L^2(\R): \supp \hat{f} \subseteq [-1/2,1/2]\}$ is the Paley-Wiener space $PW^2(\R)$ of band-limited functions.

Shift-invariant spaces form a natural class of  \emph{signal models} in sampling theory, as they  generalize the Paley-Wiener space and are useful for practical and numerical reasons. The generator $\varphi $ can be considered a parameter of the signal model that can be tuned to the specifics of the data acquisition. For instance, if $\varphi $ is compactly supported, then every point evaluation can be computed as a finite sum, and a sample $f(x)$ affects only the coefficients $c_k$ in a neighbourhood of $x$. Additional local properties of $f\in V^p(\varphi)$, such as differentiability and analyticity, can be implemented by imposing them on the generator $\varphi$. By contrast, the Paley-Wiener space consists of entire functions, and every sample of $f$ has a long-range effect on the values of $f$. Shift-invariant spaces are central in approximation theory~\cite{BVR94,JetterZhou1995,Kyriazis1995, Ron2001} and are increasingly used in signal processing. 

For the mathematical formulation of the sampling problem, we assume that the signal space $V^p(\varphi )$ is known and that $f\in V^p(\varphi )$ is sampled on a set $X$ together with  the first derivatives $f^{(s)}(x), s=0, \dots , \mu (x),$ for some multiplicity function $\mu $. Whereas the standard sampling problem only takes samples on a discrete subset of $\R $, the inclusion of derivatives has become important in several applications. First derivatives model trends, and higher derivatives indicate convexity properties. In higher dimensional spaces, i.e., for images or physical fields, one speaks of ``gradient-augmented measurements''~\cite{AdcockSui2019}, in event-based sampling ``information is transmitted  only when a significant change in the signal occurs, justifying the acquisition of a new sample''~\cite{Pawlowski10}, in other words, the value of the first derivative determines the position of the next sample. 

For the Paley-Wiener space, sampling with derivatives was studied early on~\cite{Gro92b,rawn89,Razafinjatovo1995}, the problem was taken up again recently with an emphasis on \sis s in~\cite{AGH17,GSelvan23,GRS20,SelvanRiya2022}.

The main objective  is to recover a function from its samples and derivatives on a
set $X= (x_j)_{j\in \Z}$. We indicate the number of derivatives at every point with a \emph{multiplicity function}
$\mu_X:X\longrightarrow\lbrace 0,\dots,S\rbrace$. The goal is to derive  a sampling  inequality of type
\begin{equation} \label{in11}
A_p \norm{f}_p^p \leq  \sum\limits_{x\in
  X}\sum\limits_{s=0}^{\mu_X(x)}\abs{f^{(s)}(x)}^p \leq B_p
\norm{f}_p^p \qquad \text{for all
}f\in  V^p(\varphi). 
\end{equation}
Note that the number of derivatives depends on the sampling point $x$. If \eqref{in11} is satisfied, we call $(X,\mu _X)$ a sampling set
(or a set of stable sampling) for $V^p(\varphi )$. Once a sampling inequality is available, one can use standard reconstruction algorithms from frame theory to reconstruction $f$ from $(X,\mu _X)$. 

Our final choice is a class of \emph{generators} for the \sis . We suggest the use of PEB-splines as suitable and practical generators. These have compact support, they are well understood in approximation theory~\cite{deBoor78,Schumaker2007}
and in numerics. In sampling theory, they lead to
$\mathcal{O}(N)$ algorithms, when $N$ is the number of available data~\cite{GS03}. Most importantly and in contrast to other classes of generators, for PEB-spline generators one can prove optimal results. This is precisely the objective of this paper.

Before proving anything at all, one should understand what is possible. How much information is necessary to recover a function from its samples? This question is answered by a necessary
density condition. For \sis s the relevant quantity is the weighted Beurling density defined as
\begin{equation}
D^{-}(X,\mu_X):=\liminf\limits_{R\rightarrow\infty}\inf\limits_{y\in\R}\frac{1}{2R}\sum\limits_{x\in
  X\cap  [y-R, y+R]} (1+\mu_X(x)). 
\end{equation}
If $\mu \equiv 0$, then $D^-(X,\mu _X) = D^-(X)$ is just the usual lower Beurling density. The weighted Beurling density counts the average number of data per unit interval. The necessary density condition, first proved by Landau~\cite{Landau67} for bandlimited functions in full generality (with arbitrary spectra and in higher dimensions), turns out to be a universal information-theoretic bound for sampling. In the context of sampling with derivatives in \sis s  it can be expressed as follows~\cite[Prop.~3.7.]{GroechenigEtAl2019}. 

\begin{theorem} Assume that $\mu _X$ is bounded by $M$ and  $\varphi ,  \varphi ' , \dots ,  \varphi ^{(M)}$ satisfies a mild decay condition, e.g., $|\varphi ^s (x)| \leq C(1+|x|)^{-1-\varepsilon}$ for some $\varepsilon >0$.

If $(X,\mu)$ is a sampling set for $V^p(\varphi )$, then
\begin{equation} \label{eq:ce5}
  D^-(X,\mu _X) \geq 1 \, .  
\end{equation}
\end{theorem}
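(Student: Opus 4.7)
The plan is to reduce the sampling-with-derivatives problem to a classical derivative-free sampling problem by a small perturbation of the sampling set, and then invoke the standard Landau-type density theorem for shift-invariant spaces (which does not allow derivatives) already available in the literature cited in the introduction.

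First, I would exploit the heuristic that, for each $x\in X$ with multiplicity $\mu_X(x)=m$, knowing $f(x),f'(x),\dots,f^{(m)}(x)$ is essentially equivalent to knowing $f$ at $m+1$ distinct nearby points $x+\varepsilon t_0,\dots,x+\varepsilon t_m$, via the invertibility of the associated Vandermonde/divided-difference system. After fixing once and for all distinct nodes $t_0,\dots,t_M\in[0,1]$, I form the discrete set
\begin{equation}
X_\varepsilon\ :=\ \bigcup_{x\in X}\bigl\{x+\varepsilon t_0,\ x+\varepsilon t_1,\ \dots,\ x+\varepsilon t_{\mu_X(x)}\bigr\}.
\end{equation}
For $\varepsilon$ sufficiently small the points of $X_\varepsilon$ remain pairwise distinct and the counting function of $X_\varepsilon$ on $[y-R,y+R]$ agrees, up to a $O(1)$ edge effect, with $\sum_{x\in X\cap[y-R,y+R]}(1+\mu_X(x))$. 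Taking $R\to\infty$ gives $D^-(X_\varepsilon)=D^-(X,\mu_X)$.

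Second, I would show that $X_\varepsilon$ is itself a (derivative-free) sampling set for $V^p(\varphi)$ whenever $\varepsilon$ is small enough, by comparing $\sum_{x\in X}\sum_{s=0}^{\mu_X(x)}|f^{(s)}(x)|^p$ to a weighted sum $\sum_{y\in X_\varepsilon}|f(y)|^p$. The comparison is provided by Taylor expansion: each $f^{(s)}(x)$ can be written as a linear combination of the samples $f(x+\varepsilon t_i)$ (a divided-difference formula with coefficients of size $\varepsilon^{-s}$) up to a remainder controlled by $\varepsilon \sup_{|z-x|\le\varepsilon}|f^{(\mu_X(x)+1)}(z)|$. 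The decay hypothesis $|\varphi^{(s)}(x)|\le C(1+|x|)^{-1-\varepsilon}$ for $s\le M+1$ transfers through the expansion $f^{(s)}=\sum_\ell c_\ell\varphi^{(s)}(\cdot-\ell)$ (with $(c_\ell)\in\ell^p$) to a Wiener-amalgam bound on $f^{(M+1)}$ and hence a uniform bound on the Taylor remainders for all $f\in V^p(\varphi)$ with $\|f\|_p=1$. This gives a two-sided inequality between the derivative sums and $\sum_{y\in X_\varepsilon}|f(y)|^p$, from which stable sampling of $f$ on $X_\varepsilon$ follows.

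Third, I would apply the known derivative-free density theorem for shift-invariant spaces (the SIS analogue of Landau proved e.g.\ in \cite{AldroubiGroechenig2000,GroechenigEtAl2019}) to the set $X_\varepsilon$, obtaining $D^-(X_\varepsilon)\ge 1$, and then let $\varepsilon\to 0^+$ to conclude $D^-(X,\mu_X)\ge 1$. The principal obstacle is the uniformity in the second step: the coefficients of the divided-difference representation blow up like $\varepsilon^{-M}$, and these must be absorbed by the smallness of the Taylor remainder uniformly over the unit sphere of $V^p(\varphi)$. This is precisely where the assumed decay of $\varphi,\varphi',\dots,\varphi^{(M)}$ enters in an essential way, via the Wiener-amalgam estimates on the derivatives of arbitrary $f\in V^p(\varphi)$.
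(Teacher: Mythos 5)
The statement you are proving is not actually proved in the paper: it is quoted from \cite[Prop.~3.7]{GroechenigEtAl2019}, whose argument (a comparison/homogeneous-approximation-property type proof for vector-valued shift-invariant spaces) is quite different from your perturbation scheme. Your overall plan --- replace the derivative data at $x$ by function samples at $x+\varepsilon t_0,\dots,x+\varepsilon t_{\mu_X(x)}$, check $D^-(X_\varepsilon)\le D^-(X,\mu_X)$, show $X_\varepsilon$ is a derivative-free sampling set for small $\varepsilon$, and invoke the derivative-free density theorem --- is a legitimate reduction in principle, and the density bookkeeping and the final appeal to the $\mu\equiv 0$ case are unobjectionable.

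The genuine gap is in your second step, and you have in fact flagged it yourself by writing the decay hypothesis ``for $s\le M+1$'': the theorem only assumes existence and decay of $\varphi,\varphi',\dots,\varphi^{(M)}$, not of $\varphi^{(M+1)}$. The divided-difference recovery of $f^{(s)}(x)$ from $\mu_X(x)+1$ samples in an $\varepsilon$-neighbourhood has, at the top order $s=\mu_X(x)$, a remainder governed not by $\varepsilon\,\sup|f^{(\mu_X(x)+1)}|$ (which need not exist) but by the oscillation of $f^{(\mu_X(x))}$ on an interval of length $O(\varepsilon)$; making this uniformly small over the unit ball of $V^p(\varphi)$ requires equi-uniform continuity of $f^{(M)}$, i.e.\ uniform continuity of $\varphi^{(M)}$ with amalgam control, which is again not part of the hypothesis. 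This is not a cosmetic issue in the present paper: for a PEB-spline of order $m$ sampled with multiplicities up to $M=m-1$, the derivative $\varphi^{(m-1)}$ has jump discontinuities at the knots and $\varphi^{(m)}$ does not exist as a function, so if the points with $\mu_X(x)=m-1$ cluster near $\Z+\mathcal{J}$ (nothing in the hypotheses forbids this), the total remainder is comparable to $\|f\|_p^p$ with a constant that does not shrink as $\varepsilon\to 0$, and it cannot be absorbed into the lower sampling bound. Thus the two-sided comparison between $\sum_{x,s}|f^{(s)}(x)|^p$ and $\sum_{y\in X_\varepsilon}|f(y)|^p$, on which the whole reduction rests, fails under the stated assumptions; your argument proves the theorem only under the strictly stronger hypothesis that $\varphi^{(M+1)}$ (or at least a uniform modulus of continuity for $\varphi^{(M)}$, together with a separation from the jump set as in \eqref{eq:dist_requirement}) is available, which excludes precisely the top-multiplicity spline case the paper needs.
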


The  problem of sufficient conditions that guarantee the reconstruction of $f\in V^p(\varphi )$ from its samples $(X,\mu_X)$  is more interesting and also more difficult. Under minimal conditions on the smoothness and decay of $\varphi $ one can always show  with elementary tools that any sufficiently dense set is sampling~\cite{AF98}. 
For some types of generators, the density condition~\eqref{eq:ce5} is (almost) a
characterization. If $\varphi $ is either the cardinal sine
$\frac{\sin \pi x}{\pi x}$, whence $V^2(\varphi ) = PW$, or the Gaussian or some related function, then $D^-(X,\mu )>1$ is a sufficient condition for
sampling~\cite{BeurlingCollected1989,GroechenigEtAl2019}. For other generators, \eqref{eq:ce5} may not provide a characterization, in particular, if $\varphi $ has compact support, then the maximal gap between sampling points is limited by the size of $\supp \varphi $, whereas for given Beurling density large gaps are permitted. Therefore one often uses different notions to measure the density of a set. 

\vspace{3mm}

\textbf{Contributions.} 
For PEB-splines as generators,  we will derive a set of conditions that allow the construction of sampling sets arbitrarily close to the necessary condition.

Our first theorem is somewhat technical because  the sufficient conditions are expressed in terms of local Schoenberg-Whitney conditions that are
well-known in spline theory. These conditions come close to a characterization of sampling sets with derivatives but are not complete. We postpone the precise formulation to Section~\ref{sec:main1}, and state the most important consequence.

\begin{theorem}\label{thm:existencei}
Let $\varphi $ be a PEB-spline of order $m$ with support in $[0,m]$. For every $\varepsilon >0$  and given multiplicity sequence  
$\mu = (\mu _j)_{j\in \Z}$ with values in $\{0,\dots m-1\}$, there exists a set $X= (x_j)_{j\in \Z}$, such that
$(X,\mu_X)$ is sampling for $V^p(\varphi)$, $\mu_X(x_j)=\mu_j$ for all $j\in\Z$, and $D^-(X,\mu_X) < 1+\varepsilon $.
\end{theorem}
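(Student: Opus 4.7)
The plan is to deduce Theorem~\ref{thm:existencei} from the technical Schoenberg--Whitney type sufficient condition for sampling announced in Section~\ref{sec:main1}. I anticipate that this theorem requires (a) a uniform upper bound on consecutive gaps $x_{j+1}-x_j$ by the support length $m$, and (b) in every window of the form $[k,k+m]$, $k\in\Z$, a local Schoenberg--Whitney interlacing condition between $(X\cap [k,k+m],\mu_X)$ and the integer knots of $\varphi$, quantified by a uniform lower bound on the associated Hermite collocation determinants. Once such a theorem is available, the existence statement in Theorem~\ref{thm:existencei} reduces to constructing a set $X$ with prescribed multiplicities, fulfilling (a) and (b), and having weighted density below $1+\varepsilon$.

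For the construction, fix $\varepsilon>0$ and choose $\lambda:=(1+\varepsilon/2)^{-1}\in(1/(1+\varepsilon),1)$. Define $\delta_j:=\lambda(1+\mu_j)$ and set $x_j:=\sum_{k=0}^{j-1}\delta_k$ for $j\ge 0$, extended symmetrically for $j<0$. Then $\mu_X(x_j)=\mu_j$, and since $\mu_j\le m-1$ we have $\delta_j\le\lambda m<m$, so (a) holds. For the weighted density, any window $[y-R,y+R]$ containing the samples indexed by a set $J$ satisfies
\begin{equation}
\sum_{j\in J}(1+\mu_j)\;=\;\lambda^{-1}\sum_{j\in J}\delta_j\;=\;\lambda^{-1}\bigl(2R+O(\lambda m)\bigr),
\end{equation}
uniformly in $y$, so $D^-(X,\mu_X)=1/\lambda=1+\varepsilon/2<1+\varepsilon$.

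The main obstacle is securing condition (b). The baseline construction above may, for some multiplicity patterns, produce sample points that fall exactly on an integer knot or that line up in a degenerate configuration with the knot lattice. I would therefore apply a perturbation argument. The local Schoenberg--Whitney conditions are \emph{open}: the Hermite collocation determinants are real-analytic functions of the sample positions, and the total positivity of PEB-splines together with the slack $\mu_j\le m-1$ (which leaves room for a non-degenerate Hermite configuration in each window) ensures that these determinants are not identically zero on the region respecting (a). A uniformly bounded, arbitrarily small perturbation $x_j\mapsto x_j+\eta_j$ preserves the multiplicities, the gap bound, and the density estimate, while a generic $(\eta_j)$ moves the configuration into the open region where (b) holds. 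The delicate point is \emph{uniformity} of the lower bound on the determinants across all windows; I would obtain this either by a compactness argument applied to the finitely many window-type configurations (modulo integer shifts, only finitely many multiplicity patterns of total weight $\le m$ can occur inside a window of length $m$), or by imposing a periodic structure on the perturbation so that the verification reduces to finitely many cases. Combined with the theorem from Section~\ref{sec:main1}, this yields the claim.
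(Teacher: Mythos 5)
There is a genuine gap, and it sits exactly where the paper's proof does its real work. Your argument defers to an anticipated technical theorem whose hypotheses you never verify, and your anticipated hypotheses do not match the actual ones: Theorem~\ref{thm:main_compact} requires a global partition of $\R$ into intervals $I_{M,L}(k)$ of a fixed length $L$ and, inside each interval, a \emph{selection} of points with submultiplicities $\d_j\leq\mu_X(x_j)$ whose total count is exactly $L+m-1$ (condition \eqref{eq:thm_compact_cond3.2}), satisfying the Schoenberg--Whitney conditions \eqref{eq:thm_compact_cond3.3} with a uniform margin $\varepsilon$. Your construction with gaps $\delta_j=\lambda(1+\mu_j)$, $\lambda<1$, retains \emph{all} the data, and then the obstruction to the Schoenberg--Whitney conditions is combinatorial, not a measure-zero degeneracy: the cumulative weighted count of data in $[x_1,x_j]$ equals $(x_j-x_1)/\lambda$, so over a stretch of length $L$ you have about $L/\lambda>L+m-1$ data, the surplus grows linearly in $L$, and no uniformly bounded perturbation $x_j\mapsto x_j+\eta_j$ can place the $i$-th datum inside the support of the $i$-th active B-spline shift (indeed, with all data kept there is not even a canonical square collocation matrix in your ``windows of length $m$'', since such a window contains more than $m$ data). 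The only way out is to discard data at rate roughly $1-\lambda$, i.e.\ to pass to submultiplicities block by block — which is precisely the paper's construction (blocks of total weight $L+m-1$, with only the last multiplicity of each block reduced, and points then placed to track the running count $\sum_{n<j}(1+\tilde\d_n)$) — and your proposal never does this; your openness/total-positivity remark addresses only degenerate coincidences with the knots, not the count mismatch.

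Two further points. First, your uniformity argument is unfounded as stated: ``only finitely many window-type configurations modulo integer shifts'' fails for arbitrary real positions and an arbitrary (non-periodic) multiplicity sequence $\mu$, and making the \emph{perturbation} periodic does not make the configurations periodic. The paper obtains uniform lower bounds differently: either by forcing the points onto the lattice $\tfrac{1}{m+1}\Z$ inside the $\varepsilon$-shrunk Schoenberg--Whitney windows (proof of Theorem~\ref{thm:existence}), or, in Theorem~\ref{thm:main_compact} itself, by compactness of the configuration sets $\Omega_\d$ (finitely many admissible submultiplicity patterns $\d\in\mathcal{M}$, closed bounded position sets thanks to the margin $\varepsilon$) together with continuity of $z\mapsto \MM(z,\d,-m+1)^{-1}$. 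Second, a minor but real issue: since $\mu_j$ may equal $m-1$ and $\varphi^{(m-1)}$ jumps at the integer knots, points of maximal multiplicity must avoid (or be handled at) the knots — the paper's half-open case in \eqref{eq:x_pos_condition} and the $\varepsilon$-margin take care of this, whereas your baseline points can land on integers and your genericity claim is the only safeguard. Your density computation ($D^-(X,\mu_X)=1/\lambda=1+\varepsilon/2$) is fine, but without the selection-of-submultiplicities step and a concrete uniform bound on the resulting collocation matrices, the proof of the sampling inequality is missing.
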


Our second result uses the  maximum gap between consecutive samples, also called the mesh width or the covering density. The maximum gap of a set $X = (x_j)_{j\in \Z}\subseteq \R $ 
consisting of consecutive samples  is defined as
\begin{equation}
\mg(X)= \sup_{j\in\Z} (x_{j+1}-x_{j}) \, ,
\end{equation}
and is used frequently to formulate conditions for sampling, e.g., in~\cite{AF98}. 
Our new contribution is a weighted maximum gap that takes into consideration the number of derivatives at each point. It is defined as
\begin{equation}
\mg(X,\mu_X)=\max\left\lbrace  \sup\limits_{j\in\Z} \frac{x_{j+1}-x_{j}}{1+\mu_X(x_j)}\, ,\, \sup\limits_{j\in\Z} \frac{x_{j+1}-x_{j}}{1+\mu_X(x_{j+1})}\right\rbrace 
\end{equation}
The rationale for this definition is that several derivatives at a point $x$, say $f^{(s)}(x), s= 0, \dots , \mu _X (x)$, amount to $\mu _X(x)+1$ data, so the next data point should be at distance $<\mu _X(x)+1$ from $x$, in other words,
$\frac{x_{j+1}-x_j}{\mu _X(x)+1}<1$. The precise formulation is as follows.
\begin{theorem}[{\hyperref[thm:max_gap]{Maximum Gap Theorem}}]
Let $\varphi$ be a PEB-spline of order $m$
and let $X\subseteq\R$ be a separated set with multiplicity function $\mu_X:X\to \lbrace 0,\dots, m-2\}$. If the weighted maximum gap of $(X, \mu_X)$ satisfies
\begin{equation} \label{opt?}
\mg(X,\mu_X)  <1,
\end{equation}
then $(X, \mu_X)$ is a sampling set for $V^p(\varphi)$.
\end{theorem}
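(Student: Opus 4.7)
The plan is to derive the Maximum Gap Theorem from the local Schoenberg--Whitney (SW) characterization of sampling sets announced for Section~\ref{sec:main1}. Granting that characterization, which reduces the sampling inequality \eqref{in11} to a collection of finite-dimensional Hermite interleaving conditions of the data $(X,\mu_X)$ against the integer knots of the PEB-spline $\varphi$, the proof becomes local and combinatorial: it suffices to verify that $\mg(X,\mu_X)<1$ forces the SW interleaving on every window of length $m$.

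First I would unpack the hypothesis. For any two consecutive samples $x_j,x_{j+1}$, the condition $\mg(X,\mu_X)<1$ gives simultaneously
\[
 x_{j+1}-x_j < 1+\mu_X(x_j) \quad \text{and} \quad x_{j+1}-x_j < 1+\mu_X(x_{j+1}).
\]
A telescoping estimate over any interval $I$ of length $m$ then shows that the weighted count $\sum_{x_j\in I}(1+\mu_X(x_j))\geq m$, which matches the cardinality required for unisolvent Hermite interpolation by $m$ consecutive translates $\varphi(\cdot-k),\dots,\varphi(\cdot-k-m+1)$. This already implies $D^-(X,\mu_X)\geq 1$, so the hypothesis is consistent with the necessary density condition \eqref{eq:ce5}.

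The main obstacle is the fine interleaving. Listing the data in a window $(k,k+m)$ with multiplicities $\mu_X(x_j)+1$ as a non-decreasing sequence $t_1\leq t_2\leq \dots \leq t_N$, the Hermite SW condition requires $t_i$ to lie strictly inside the knot interval $(k+i-1,k+i+m-1)$. The two quotients in the definition of $\mg(X,\mu_X)$ are calibrated for precisely this control: the first bounds how far to the right the cluster of $\mu_X(x_j)+1$ repetitions at $x_j$ can extend before the next knot, the second bounds the corresponding reach to the left emanating from $x_{j+1}$. I would carry out a direct bookkeeping argument by induction on $j$, using the strict inequality $\mg(X,\mu_X)<1$ to secure a positive margin that controls the delicate boundary cases where a sample coincides with, or lies very close to, an integer knot. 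The hypothesis $\mu_X\leq m-2$ enters here by ensuring that no single point absorbs an entire knot window, which would make the neighbouring sample inequalities trivially incompatible with $\mg<1$.

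With the local SW conditions verified uniformly in $k\in\Z$, the sampling inequality \eqref{in11} for every $p\in[1,\infty]$ follows from the quantitative statements of Section~\ref{sec:main1}; the upper bound in \eqref{in11} is immediate from the compact support of $\varphi$ and its derivatives together with the separation of $X$, while the lower bound is the substantive content delivered by SW. I expect the verification of the strict interleaving to be the only non-routine step, since the algebraic reformulation of $\mg(X,\mu_X)<1$ into the knot-by-knot inequalities $t_i\in(k+i-1,k+i+m-1)$ requires a careful case split depending on whether the relevant cluster sits in the left half or the right half of its knot interval.
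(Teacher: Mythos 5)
Your proposal hinges on deducing the theorem from the uniform local Schoenberg--Whitney conditions of Section~\ref{sec:main1}, i.e.\ from Theorem~\ref{thm:main_compact}, by claiming that $\mg(X,\mu_X)<1$ forces those conditions ``on every window of length $m$''. This is where the argument breaks. First, Theorem~\ref{thm:main_compact} is not a characterization: it requires a \emph{globally fixed} partition $I_{M,L}(k)$ together with a uniform security margin $\varepsilon>0$ in \eqref{eq:thm_compact_cond3.3}, and that margin is exactly what produces uniform lower bounds for the collocation matrices (the compactness argument in Step~2 of its proof). The weighted gap condition supplies neither: Example~\ref{ex:gap_vs_compact} exhibits a separated set with constant multiplicity and $\mg(X,\mu_X)<1$ which violates the hypotheses of Theorem~\ref{thm:main_compact} for \emph{every} choice of $M$, $L$, $\varepsilon$, so the route ``gap condition $\Rightarrow$ conditions of Theorem~\ref{thm:main_compact} $\Rightarrow$ sampling'' cannot work. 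Second, your preliminary bookkeeping is quantitatively wrong: it is not true that every interval of length $m$ carries weighted count at least $m$ (take $m=5$, all multiplicities equal to $3$, points spaced $3.9$ apart, so $\mg(X,\mu_X)=0.975$; the interval $(4,9)$ contains a single point of weight $4$). Moreover, the restriction of $f$ to a window of length $L$ involves $L+m-1$ active translates, so the relevant interpolation problems live on adaptively chosen intervals of variable length with carefully reduced submultiplicities (the dimension count $\sum_j(1+\d_j)=L+m-1$ forces one to lower the multiplicity at the last selected point), not on fixed windows of length $m$ with $m$ translates. Even where the strict inequality $\mg(X,\mu_X)<1$ does give a ``positive margin'', that margin controls the gaps, not the distance of the sampling points to the boundaries of the knot intervals, so no uniform invertibility bound follows.

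For comparison, the paper's proof splits the task: the Combinatorial Lemma~\ref{lem:maxgap_SW_conditions} shows that $\mg(X,\mu_X)<1$ allows one to select, around any $x\in X$, points and submultiplicities on an interval of variable length satisfying the exact dimension count and the (non-uniform) Schoenberg--Whitney conditions; by Corollary~\ref{cor:matrix_multiplicities} and an overlapping-interval induction this yields only that $(X,\mu_X)$ is a \emph{uniqueness} set (Theorem~\ref{thm:combinatorial_USet}). The upgrade from uniqueness to the stable sampling inequality is then done by Beurling's weak-limit method: the weighted maximum gap does not increase under weak limits of integer translates (Lemma~\ref{lemma:MG_weak_preserved}), and Corollary~\ref{cor:sampling_weak_mult} converts ``every weak limit is a uniqueness set for $V^\infty(\varphi)$'' into the sampling inequality for all $p\in[1,\infty]$; the hypothesis $\mu_X\le m-2$ keeps the discontinuous derivative $\varphi^{(m-1)}$ out of the data so that this corollary applies without further conditions. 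To repair your proposal you would have to either extract uniform SW margins on a fixed partition from $\mg(X,\mu_X)\le 1-\eta$ --- which Example~\ref{ex:gap_vs_compact} rules out --- or supply a substitute for the compactness step, which is precisely the role of the weak-limit machinery you omit.
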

In contrast to the more technical theorem, the maximum gap condition
is easy to understand and to verify.

As a special case, we mention sampling with the same number of derivatives at every point. 

\begin{corollary} \label{allequalintro}
Let $\varphi$ be a PEB-spline of order $m\geq 2$
and let $X\subseteq\R$ be a separated set. Assume that $\mu _X$ is constant, $\mu _X(x) = s < m-1$ for all $x\in X$. If 
\begin{equation}
\mg(X)< s+1,
\end{equation}
then $(X, \mu_X)$ is a sampling set for $V^p(\varphi)$, $1\leq p\leq\infty$.

In particular, a lattice  $X=\alpha \Z $ is a 
sampling set for $V^p(\varphi), 1\leq p\leq \infty $, if and only if $\alpha < s+1$. 
\end{corollary}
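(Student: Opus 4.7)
The first assertion is immediate from the Maximum Gap Theorem. With $\mu_X \equiv s$, both suprema in the definition of the weighted maximum gap collapse to the same expression, yielding
\begin{equation}
\mg(X,\mu_X) \;=\; \sup_{j\in\Z} \frac{x_{j+1}-x_j}{s+1} \;=\; \frac{\mg(X)}{s+1}.
\end{equation}
Hence $\mg(X) < s+1$ is equivalent to $\mg(X,\mu_X) < 1$, and the Maximum Gap Theorem applies. The assumption $s < m-1$ matches exactly the admissible range $\mu_X(x) \in \{0,\dots,m-2\}$ required there.

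For the lattice $X = \alpha \Z$, the direction $\alpha < s+1 \Rightarrow $ sampling is a direct special case of the first assertion, since $\mg(\alpha\Z) = \alpha$. For the converse, assume that $\alpha\Z$ with constant multiplicity $s$ is a sampling set. The necessary density theorem stated in the introduction yields $D^{-}(\alpha\Z,\mu_X) \geq 1$, and a direct count gives $D^{-}(\alpha\Z,\mu_X) = (s+1)/\alpha$. Therefore $\alpha \leq s+1$.

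The principal obstacle is to rule out the equality $\alpha = s+1$ at the critical density. The strategy I would pursue is to produce a nonzero $f \in V^p(\varphi)$ with $f^{(j)}((s+1)k) = 0$ for every $k \in \Z$ and $0 \leq j \leq s$, which contradicts the lower sampling inequality $A_p > 0$. Writing $f(x) = \sum_k c_k \varphi(x-k)$, the conditions $f^{(j)}((s+1)k)=0$ become an infinite-dimensional linear system on the coefficient sequence $(c_k)$, which is block-Toeplitz because the sampling set is a lattice. The diagonalization of such systems by Fourier transform on $\Z$ reduces the solvability to the non-vanishing of a family of $(s+1)\times m$ collocation determinants built from the values and derivatives of $\varphi$ at the nodes inside a period of length $s+1$; the critical density $\alpha = s+1$ is precisely the threshold at which these collocation determinants degenerate, so a bounded quasi-periodic sequence $(c_k)$ producing a nonzero $f$ with vanishing samples can be constructed. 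Making this construction precise and verifying $(c_k)\in \ell^p$ (or, when $p<\infty$, using duality with $\ell^\infty$ to refute uniform stability) is where the specific algebraic structure of the PEB-spline, and the techniques of Schoenberg--Whitney type already developed earlier in the paper, must be invoked.
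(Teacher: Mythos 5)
Your proof of the main assertion is exactly the paper's: for constant $\mu_X\equiv s$ both suprema in the weighted gap coincide, $\mg(X,\mu_X)=\mg(X)/(s+1)$, and since $s\leq m-2$ the set $\{x\in X:\mu_X(x)=m-1\}$ is empty, so the hypotheses of Theorem~\ref{thm:max_gap} are satisfied; this is precisely how Corollary~\ref{allequal} is obtained. Your lattice sufficiency via $\mg(\alpha\Z)=\alpha$ and the exclusion of $\alpha>s+1$ via the necessary density condition $D^-(\alpha\Z,\mu_X)=(s+1)/\alpha\geq 1$ also coincide with the paper's discussion following Corollary~\ref{allequal}.

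The gap is the critical case $\alpha=s+1$, which you leave as a sketch, and here the route you propose would in fact fail. The degeneration you are counting on --- that at the critical density the periodized collocation (block-Toeplitz) symbol always acquires a zero, so that a bounded quasi-periodic coefficient sequence yields a nonzero $f\in V^\infty(\varphi)$ with vanishing data --- is not a general feature of PEB-splines. For an order-$2$ PEB-spline (e.g.\ the hat function) with $s=0$ one has $f(k)=c_{k-1}\varphi(1)$ with $\varphi(1)>0$, so $\Z=(s+1)\Z$ \emph{is} a sampling set at the critical value $\alpha=s+1$; similarly for the cubic B-spline ($m=4$, $s=0$) the integer-sample symbol $\tfrac16\left(z+4z^2+z^3\right)$ has no zero on the torus, so $\Z$ is again sampling. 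Whether the critical lattice fails depends on the generator and on the offset of the lattice relative to the knots; this is exactly why the paper itself only argues the weaker facts: non-sampling of $\alpha\Z$ for $\alpha>s+1$ by the density theorem, and, in a footnote, that \emph{some translate} $x_0+(s+1)\Z$ fails to be sampling (a sharpness statement about the gap condition, not a statement about $\alpha\Z$ itself). So no completion of your construction can work for all PEB-splines, and neither your argument nor the paper's actually establishes the ``only if'' at $\alpha=s+1$ as literally stated; what is provable, and what the body of the paper proves, is the version with $\alpha>s+1$ together with the translated-lattice counterexample at the critical gap.
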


When using $\mg (X,\mu _X)$ as the relevant density and  PEB-splines as generators, then the weighted gap condition~\eqref{opt?} is best possible in the following sense. One can show that for $a>1$ and given multiplicity sequence $(\mu _j)_{j\in \Z }$ there exist sets $X= (x_j)$, such that $\mg (X,\mu ) = a$, but $(X,\mu )$ is not a sampling set. 

It is easy to see that  $D^-(X,\mu _X) \geq \mg (X,\mu _X)^{-1}$. This explains why the maximum gap  condition is often problematic. So far there is only a handful of generators~\cite{AldroubiGroechenig2000,SR17a, GroechenigEtAl2017,GroechenigEtAl2019} for which the condition $\mg (X) <1$ or $\mg (X, \mu _X) < 1$ has been proved to be sufficient for sampling. In a surprisingly large number of publications, a
sufficient condition $\mg (X) \leq \delta _0$  has been derived for unknown or small $\delta_0$. Since the comparison to the necessary condition is missing, more research is required to understand the relevance of such results.

In the last section, we exploit a connection between sampling in \sis s and the theory of Gabor frames and derive some new results about Gabor frames with PEB windows. 

\vspace{3mm}

\textbf{Methods.} 
Following~\cite{AldroubiGroechenig2000} we partition $\R $ into intervals of finite length and then study the local problem. Since a PEB-spline of order $m\in\N$ has support in the  interval $[0,m]$, the restriction of a function $f$ in $V^p(\varphi )$ to an interval $[a,b), a,b\in \Z$, sees only the finite sum 
$$
f(x) = \sum _{\ell = a-m+1}^{b-1} c_{\ell} \varphi (x-\ell ) \qquad \text{ for } x\in [a,b) \, .
$$
This is then a finite-dimensional problem, and the intervening matrix is the \emph{collocation matrix} associated to the PEB-spline $\varphi 
$. Its invertibility is well understood and can be expressed with the help of the Schoenberg-Whitney conditions on the sampling points in
$[a,b]$. The characterization of invertible collocation matrices is one of the cornerstones of spline theory~\cite{Schumaker2007}, and our proofs pay homage to this fundamental result. We
develop two approaches centred around the collocation matrix. In one approach we force uniform bounds on the collocation matrix and then glue together the local reconstructions. In an alternative approach, we use Beurling's technique of weak limits to arrive at a sampling theorem. 

\vspace{3mm}

The paper is organized as follows. Section \ref{sec:SI-spaces} introduces shift-invariant spaces and sampling sets and prepares some technical tools for sampling  with derivatives. 
Section \hyperref[sec:ECC]{3} is dedicated to the basic theory of ECC-systems and B-splines with a special focus on collocation matrices for sampling with derivatives. 
The first main result and the existence of sampling sets near the optimal density are proven in Section \ref{sec:main1}. Section \ref{sec:weak_limits} is devoted to Beurling's technique adapted to sampling in shift-invariant spaces. 
With this at hand, we prove the main sampling with the maximum gap condition in Section \ref{sec:main2}. 
We conclude with Section \ref{sec:discussion} with a discussion of the two methods and Section \ref{sec:gabor}, where we derive the implications of the two sampling theorems for Gabor systems. A technical result is postponed to \hyperref[sec:appendix]{the appendix}.

\section{Shift-invariant spaces and sampling}\label{sec:SI-spaces}
We begin with properties of \sis s and various formulations of the  sampling problem. 
\subsection{(Vector-valued) shift-invariant spaces}
Let us consider a shift-invariant space $V^p(\varphi)$. Since each element $f\in V^p(\varphi)$ is represented as $f = \sum_{\ell\in\Z}c_\ell T_\ell\varphi$ for some sequence $\left(c_\ell\right)_{\ell\in\Z}\in\ell^p(\Z)$, the desirable generators are those which come with the norm equivalence
\begin{equation}\label{eq:stable_def}
A_p \norm{c}_{\ell^p} \leq \Big\lVert\,\sum_{\ell\in\Z}c_\ell
T_\ell\varphi\,\Big\rVert_{L^p}\leq B_p \norm{c}_{\ell^p}, \qquad c\in\ell^p(\Z),
\end{equation}
where the constants $A_p,B_p>0$ depend only on $p$. If this holds, we say that $\varphi$ has \textit{$p$-stable integer translates (shifts)} and denote \eqref{eq:stable_def} as 
$\norm{f}_{L^p} \asymp \norm{c}_{\ell^p} $. 
If norm equivalence holds for all $p\in[1,\infty]$, we omit the reference to $p$ and speak of \textit{stable integer translates} and \textit{a stable generator}. 

The following theorem summarizes some well-known conditions for
stability. 
\begin{theorem}\label{thm:stable_shifts} Let $\varphi:\R\to\C$ be a bounded, compactly supported function. Then the following statements are equivalent:
\begin{enumerate}[(i)]
\item The generator $\varphi$ has stable translates.
\item The generator $\varphi$ has stable $\infty$-translates.
\item The translates $\lbrace T_\ell\,\varphi : \ell\in\Z\rbrace$ are $\ell^\infty$-independent, i.e., 
\begin{equation}
\sum\limits_{\ell\in\Z} c_\ell\,T_\ell\, \varphi \not\equiv 0 \qquad\text{ for all }c\in\ell^\infty(\Z)\setminus\lbrace 0\rbrace.
\end{equation}
\item The Fourier transform $\widehat{\varphi}$ does not have a real $1$-periodic zero, i.e.,
\begin{equation}
0< \sum\limits_{\ell\in\Z} \abs{\hat{\varphi}(\omega+\ell)}^2 \qquad\text{ for all }\omega\in\R.
\end{equation}
\end{enumerate}
\end{theorem}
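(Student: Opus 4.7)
The plan is to close the cycle $(i) \Rightarrow (ii) \Rightarrow (iii) \Rightarrow (iv) \Rightarrow (i)$. First I would observe that the upper bound $\|\sum c_\ell T_\ell \varphi\|_{L^p} \leq B_p \|c\|_{\ell^p}$ is automatic for a bounded, compactly supported $\varphi$ and every $p \in [1,\infty]$, because at each point $x \in \R$ the sum involves at most $N \coloneqq \lceil \mathrm{diam}(\supp \varphi)\rceil + 1$ nonzero terms, so a pointwise $\ell^1$/$\ell^\infty$ estimate followed by an interpolation (or a direct Minkowski argument on the disjoint unit intervals) gives the bound. Thus only the lower bound in \eqref{eq:stable_def} is at stake.

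The implication $(i) \Rightarrow (ii)$ is just specialization to $p=\infty$. For $(ii) \Rightarrow (iii)$: if $c \in \ell^\infty(\Z)\setminus\{0\}$ satisfies $\sum_\ell c_\ell T_\ell \varphi \equiv 0$, then the lower bound in \eqref{eq:stable_def} with $p=\infty$ would force $\|c\|_{\ell^\infty}=0$, a contradiction.

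For $(iii) \Rightarrow (iv)$ I would prove the contrapositive. Suppose $\hat{\varphi}(\omega_0+k)=0$ for some $\omega_0 \in \R$ and every $k\in\Z$. Take the bounded sequence $c_\ell = e^{2\pi i \ell \omega_0}$ and form
\begin{equation}
g(x) = \sum_{\ell\in\Z} e^{2\pi i \ell \omega_0}\, \varphi(x-\ell),
\end{equation}
which is a locally finite sum by compact support. The function $h(x) \coloneqq e^{-2\pi i \omega_0 x} g(x) = \sum_\ell e^{-2\pi i \omega_0(x-\ell)}\varphi(x-\ell)$ is $1$-periodic, and a direct computation of its Fourier coefficients gives $\hat{h}(k) = \hat{\varphi}(\omega_0 + k) = 0$ for every $k$. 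Hence $h\equiv 0$, so $g\equiv 0$, contradicting $\ell^\infty$-independence.

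The hard part is $(iv) \Rightarrow (i)$. My plan is to reduce to the classical Jia--Micchelli characterization: for a compactly supported $\varphi \in L^\infty(\R)$, the nonvanishing of $\sum_k |\hat{\varphi}(\omega+k)|^2$ is equivalent to $p$-stability of $\{T_\ell\varphi\}$ simultaneously for all $p \in [1,\infty]$. One route, which I would sketch but not carry out in detail, is to first establish the case $p=2$ via the Plancherel/Zak-type identity
\begin{equation}
\Big\| \sum_\ell c_\ell T_\ell \varphi\Big\|_{L^2}^2 = \int_0^1 \Big| \sum_\ell c_\ell e^{-2\pi i \ell \omega}\Big|^2 \Big( \sum_k |\hat{\varphi}(\omega+k)|^2\Big) \, d\omega,
\end{equation}
where the continuous, $1$-periodic bracketed factor is bounded above and, by (iv), away from zero. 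To transfer to general $p$, I would use compact support: restricting to any interval $[n,n+1)$ the sum involves only finitely many coefficients $c_{n-N+1},\dots,c_n$, and the $p=2$ inequality combined with the equivalence of norms on the resulting finite-dimensional coefficient block (with uniform constants coming from translation invariance) yields the $\ell^p$--$L^p$ equivalence for every $p$. The main obstacle is precisely the bridge from $p=2$ to general $p$; the standard way around it is to invoke Jia--Micchelli (or equivalently the de~Boor--DeVore--Ron theory of local linear independence for compactly supported generators), which packages exactly this transfer.
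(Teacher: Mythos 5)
Your plan is correct and is, at its core, the same as the paper's treatment: the paper offers no proof of its own but simply cites Jia--Micchelli and Ron for the substantive content, and your argument likewise delegates the only hard implication, $(iv)\Rightarrow(i)$, to the Jia--Micchelli characterization, while your elementary arguments for the upper bound and for $(i)\Rightarrow(ii)\Rightarrow(iii)\Rightarrow(iv)$ (periodization $h=\sum_\ell T_\ell\bigl(e^{-2\pi i\omega_0\,\cdot}\varphi\bigr)$ with vanishing Fourier coefficients $\hat{h}(k)=\hat{\varphi}(\omega_0+k)$) are standard and sound. One caution: the sketched bridge from $p=2$ to general $p$ by restricting to unit intervals does not work as stated, since the global $L^2$ lower bound does not localize to a lower bound on each coefficient block without local linear independence of the translates; but as you ultimately invoke Jia--Micchelli for exactly this transfer, the final argument has no gap.
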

We refer to Ron's survey article~\cite{Ron2001}. The equivalences follow from the results in \cite[Thm.~3.5]{JiaMicchelli1991} and \cite[Thm.~29]{Ron2001}. For further references, the reader can consult \cite[Eq.~(9)]{Mallat1989},
\cite[Thm.~3.3]{JiaMicchelli1991}. For sampling in shift-invariant spaces the  stability condition is explained and used in \cite{AldroubiGroechenig2000,
  AldroubiGroechenig2001} and \cite[Sec.~2]{GroechenigEtAl2017}.

For the special case of a bounded, compactly supported  generator $\varphi$ above with $\mathrm{supp}(\varphi) \subseteq [-L,L]$, the important lower inequality is almost trivial: 
\begin{equation}
\norm{f}_{L^p}^p \leq \int_{\R}\sum\limits_{\ell\in\Z} \abs{c_\ell
T_\ell \varphi(t)}^p\, dt  \leq \sum\limits_{k\in\Z}\int\limits_{K}^{K+1}\sum\limits_{\ell = K-2L+1}^{L} \abs{c_\ell}^p \norm{\varphi}_{\infty}\, dt = 4L \norm{\varphi}_\infty \norm{c}_p^p, 
\end{equation}
if $p<\infty$, with the analogous estimate for $p=\infty$. 

Following \cite[Sec.~2]{GroechenigEtAl2019}, we further consider vector-valued shift-invariant spaces
\begin{equation}
V^p(\Phi) :=\left\lbrace\sum\limits_{\ell\in\Z}c_\ell \, T_\ell\,\Phi : c\in\ell^p(\Z)\right\rbrace \subseteq (L^p(\R))^{S+1}
\end{equation} 
generated by a vector $\Phi = \left(\Phi^0,\dots, \Phi^S\right)^t\in\left(L^1(\R)\right)^{S+1}$ with norm
\begin{equation}
\norm{F}_p =\left(\sum\limits_{s=0}^S \norm{F^s}_{L^p}^p\right)^{1/p},
\end{equation}
with the usual modification for $p=\infty$. We will assume that $\Phi$ has stable integer shifts, i.e., 
\begin{equation}\label{c1}
\Big\lVert\,\sum\limits_{\ell\in\Z}c_\ell \, T_\ell\,\Phi\,\Big\rVert_p \asymp \norm{c}_p \qquad\text{ for all }c\in \ell^p(\Z).
\end{equation}

\subsection{Sampling sets}\label{sec:sampling_sets}
To accommodate derivatives and  multiplicities in sampling, we consider tuples of sets $\overrightarrow{X}=\left(X^0,\dots, X^S\right)$ with $X^s\subseteq\R$, $0\leq s\leq S$. Assume that the generator has stable integer shifts and that the shift-invariant space $V^p(\Phi)$  contains tuples of functions $F=(F_0,\dots, F^S)$ which are pointwise well-defined. In the following, $V^p(\Phi^s)$ contains only continuous or piecewise continuous functions with well-defined one-sided limits at the finitely many discontinuities. 
We say that $\overrightarrow{X}$ is a \emph{sampling set} for $V^p(\Phi)$, $p\in[1,\infty]$, if 
\begin{equation}\label{eq:sampling_vectors_def}
\sum\limits_{s=0}^S \sum\limits_{x\in X^s} \abs{F^s(x)}^p \asymp \norm{F}_p^p \qquad\text{for all }F\in V^p(\Phi),
\end{equation}
where the inequality constants depend only on $p$ and $\Phi$ (with the usual adaptation for $p=\infty$). 
A weaker notion is that of  a uniqueness set. We call $\overrightarrow{X}$ a \emph{uniqueness set} if the linear operator $F\mapsto (F^0|_{X^0},\dots, F^s|_{X^s})$ is injective on $V^p(\Phi)$, that is,
\begin{equation}
{F^s}\big|_{{X^s}} \equiv 0 \ \ \text{ for all } s\in\{ 0,\dots, S\} \qquad \Leftrightarrow \qquad F\equiv 0.
\end{equation}

The case $S=0$ is the standard case of sampling sets. As usual, the upper bound for sampling is easy to obtain in a general setting, whereas, for the lower bound, the specific features of the generator have to be exploited.

\begin{theorem}\label{thm:upper_bound_W_jumps}
Let $\varphi$ be a compactly supported, piecewise continuous function with finitely many jump discontinuities and assume that $\varphi$ has stable integer translates. Then 
\begin{enumerate}[(i)]
\item All functions in $V^p(\varphi)$ are piecewise continuous. Furthermore, the set of discontinuities is separated, consisting only of jump discontinuities. 
\item The sampling operator $f\to f|_{X}$ is bounded on $V^p(\varphi)$.
Here the samples at discontinuities are to be understood as samples of the right-sided limits.
\end{enumerate}
The analogous statement holds with left-sided limits.
\end{theorem}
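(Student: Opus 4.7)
The plan is to exploit the compact support of $\varphi$ to localize every pointwise computation: since $\supp\varphi \subseteq [-L,L]$, for any $x \in \R$ only the finitely many indices $\ell$ with $|x-\ell| \leq L$ contribute to $f(x) = \sum_{\ell} c_\ell \, T_\ell \varphi(x)$, and the number of such indices is at most $2L+1$ uniformly in $x$. Both assertions (i) and (ii) then reduce to statements about this uniformly bounded finite sum of translates of $\varphi$.

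For (i), let $t_1, \dots, t_N \in [-L,L]$ denote the jump points of $\varphi$ and put $D = \bigcup_{i=1}^N (t_i + \Z)$. The first step I would carry out is to verify that $D$ is separated: the distance between two distinct elements $t_i+n$ and $t_j+m$ depends only on the finitely many values $t_i - t_j \bmod 1$, so the infimum of positive gaps is strictly positive (coincidences $t_i - t_j \in \Z$ merely merge cosets and do not hurt). Fixing $x \notin D$, I pick a neighbourhood $U \subseteq \R \setminus D$; on $U$ the relevant finitely many translates $T_\ell \varphi$ are all continuous, hence so is $f$. At $x\in D$ the same finite sum has well-defined one-sided limits because each summand does, so $f$ is piecewise continuous with jumps contained in the separated set $D$. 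The only slightly delicate point here is the verification of separatedness of $D$; everything else is a direct consequence of the finite-sum localization.

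For (ii), the pointwise estimate
\begin{equation}
|f(x)| \leq \|\varphi\|_\infty \sum_{\ell : |x-\ell|\leq L} |c_\ell|
\end{equation}
combined with H\"older's inequality yields $|f(x)| \leq C \|c\|_{\ell^p}$, and stability \eqref{eq:stable_def} converts the right-hand side into $C' \|f\|_{L^p}$. When $X$ is separated, the standard double-counting
\begin{equation}
\sum_{x \in X} |f(x)|^p \,\leq\, \|\varphi\|_\infty^p (2L+1)^{p-1} \sum_{\ell \in \Z} |c_\ell|^p \,\#\{x \in X : |x-\ell|\leq L\}
\end{equation}
gives $\|f|_X\|_{\ell^p} \lesssim \|f\|_{L^p}$, since the cardinality is bounded uniformly in $\ell$ by the separation constant of $X$. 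At the jump points one replaces $T_\ell\varphi(x)$ by its right-sided limit, which still satisfies $|T_\ell \varphi(x+)| \leq \|\varphi\|_\infty$, so all estimates persist verbatim; the left-sided analogue follows by symmetry.
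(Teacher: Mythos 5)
Your proof is correct and takes essentially the paper's route: part (i) is the same locally-finite-sum argument (you additionally make explicit the easy check that the jump set $\bigcup_i(t_i+\Z)$ is separated), and part (ii), which the paper simply delegates to \cite[Thm.~3.1(iii)]{AldroubiGroechenig2001}, is exactly that amalgam-type estimate specialized to a compactly supported generator and a separated set $X$ (the paper's standing assumption), with the stability bound $\norm{c}_{\ell^p}\lesssim\norm{f}_{L^p}$ finishing the argument. No gap; the only cosmetic remarks are that the count $\#\{x\in X:|x-\ell|\le L\}$ is bounded by roughly $2L/\delta+1$ with $\delta$ the separation constant, and that the case $p=\infty$ follows directly from your pointwise bound without the double counting.
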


\begin{proof}
\textit{(i)} A series $\sum_{\ell\in\Z}c_\ell T_\ell \varphi$ is locally finite, so it inherits the properties of the generator. 

Concerning \textit{(ii)}, we refer the reader to
\cite[Thm.~3.1(iii)]{AldroubiGroechenig2001} for the proof~\footnote{Note that
  the proof does not require the continuity of $\varphi $.}.
\end{proof}

To relate sampling in vector-valued spaces to sampling with derivatives with a  multiplicity function $\mu_X: X\to\{0,\dots, S\}$, 
we set $\Phi\coloneqq (\varphi,\varphi^{(1)},\dots,\varphi^{(S)})$ and 
\begin{equation}
X^s\coloneqq \{ x\in X : \mu_X(x)\geq s\},\qquad 1\leq s\leq S.
\end{equation} 
In this case, the inequality \eqref{eq:sampling_vectors_def} for vector-valued functions is the same as the sampling inequality 
\begin{equation} 
\sum\limits_{x\in X} \sum\limits_{s=0}^{\mu_X(x)}\abs{f^{(s)}(x)}^p
\asymp \norm{f}_p^p  \qquad \text{for all }f\in  V^p(\varphi).
\end{equation}

\section{Some Spline Theory}\label{sec:ECCa}
In this section, we introduce ECC-systems and Chebyshev B-splines and cite and prove  statements about collocation matrices arising in the sampling problem. We follow  the terminology in \cite{Schumaker2007}. 

\subsection{ECC-systems and Chebyshev B-splines}\label{sec:ECC}
We call a system of functions $u_\ell:[a,b]\longrightarrow\C$, $1\leq \ell\leq m$,  \textit{a Chebyshev system}, if the collocation matrix with entries 
\begin{equation}
\left(\, u_\ell\,(t_i)\,\right)_{1\leq i,\ell\leq m}
\end{equation}
has a strictly positive determinant for all $a\leq t_1<\dots <t_m\leq b$. It follows from the definition that a Chebyshev system is linearly independent. The span of $\left(u_1, \dots, u_m \right)$ is called a Chebyshev space. 

The definition of a Chebyshev system implies immediately the following property of the zero set. One of the main properties of these spaces is the number of solutions
of $f(x)=0$. 
\begin{lemma}\label{thm:Tzeros}
If $\left(u_1,\dots,u_m\right)$ is a Chebyshev system, then either $(c_\ell)_{1\leq \ell\leq m}=0$ or
\begin{equation}
\#\bigg\lbrace t\in[a,b]: \sum\limits_{\ell=1}^m c_\ell u_\ell(t)=0\bigg\rbrace\leq m-1.
\end{equation}
\end{lemma}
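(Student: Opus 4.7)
The plan is to argue by contradiction, turning the zero-count statement into a question about the solvability of a homogeneous linear system whose coefficient matrix is precisely a collocation matrix of the Chebyshev system.

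Suppose $(c_\ell)_{1\leq \ell \leq m}$ is a nonzero coefficient vector and set $f(t) = \sum_{\ell=1}^m c_\ell u_\ell(t)$. Assume, toward a contradiction, that $f$ has at least $m$ distinct zeros in $[a,b]$, and pick $m$ of them, say $a \leq t_1 < t_2 < \dots < t_m \leq b$. The equations $f(t_i) = 0$ for $i = 1, \dots, m$ translate into the homogeneous linear system
\begin{equation}
\bigl(u_\ell(t_i)\bigr)_{1\leq i,\ell \leq m}\,(c_1,\dots,c_m)^t = 0.
\end{equation}
Since $(c_\ell)_{1\leq \ell \leq m}$ is nontrivial, this matrix must be singular, i.e., its determinant vanishes.

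However, by the defining property of a Chebyshev system applied to the admissible tuple $t_1 < \dots < t_m$ in $[a,b]$, the determinant of $\bigl(u_\ell(t_i)\bigr)_{1\leq i,\ell \leq m}$ is strictly positive. This contradicts the previous paragraph, so $f$ can have at most $m-1$ zeros in $[a,b]$, completing the proof.

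There is essentially no obstacle here: the lemma is an immediate dualization of the non-vanishing of collocation determinants, and the only subtlety to keep in mind is being explicit that the nontrivial solvability of the linear system forces the determinant to be zero, which is incompatible with the strict positivity guaranteed by the Chebyshev hypothesis.
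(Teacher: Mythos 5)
Your argument is correct and is precisely the "immediate" argument the paper has in mind (the paper states the lemma without proof, noting it follows directly from the definition): a nonzero coefficient vector annihilated by the collocation matrix at $m$ distinct zeros would force that matrix to be singular, contradicting the strict positivity of its determinant guaranteed by the Chebyshev property. Nothing further is needed.
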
 
Moving further towards splines, we choose positive weight functions $w_s\in C^{m-s}[a,b]$ on an interval $[a,b]\subseteq\R$ and define
\begin{align}
u_1(x)& = w_1(x),\\
u_2(x)& = w_1(x)\int\limits_a^x w_2(x_2)\,dx_2,\\
\vdots&\\
u_m(x)& = w_1(x)\int\limits_a^x w_2(x_2)\int\limits^{x_2}_a w_3(x_3) \dots \int\limits_a^{x_{m-1}} w_m(x_m)\, d x_m \dots\,dx_2.
\end{align}
These are functions in $\C^{m-1}[a,b]$ and form a so-called \emph{extended complete Chebyshev (ECC) system} on $[a,b]$. This means that for all $1\leq D\leq m$ and all $t_1\leq\dots\leq t_D\in[a,b]$ the \emph{collocation matrix of Hermite interpolation}, i.e., the matrix with entries 
\begin{equation}
\big(u_\ell^{(d_i)}\,(t_i)\big)_{1\leq i,\ell\leq D},
\end{equation}
where
\begin{equation}
d_i:=\max\left\lbrace \ell : t_i =\dots = t_{i-\ell}\right\rbrace,\qquad 1\leq i\leq D,
\end{equation}
has a positive determinant. This is the matrix that arises in the Hermite interpolation problem
\begin{equation}
f=\sum\limits_{\ell=1}^D c_\ell u_\ell,\quad  f^{(d_i)}(t_i) = \xi_i,\qquad 1\leq i\leq D.
\end{equation}
The associated ECC-space is given by $\U_m:=\spanSpace_\C{(u_1,\dots, u_m)}.$
As the simplest example (and the motivation), we consider $w_1(x)=1$ and $w_s = s-1$, $2\leq s\leq m$. In this case, we obtain the polynomials $u_s=(x-a)^{s-1}$, $1\leq s\leq m$, i.e., the ECC-space $\U_m = \mathcal{P}_{m}$ is the space of polynomials of degree at most $m-1$. 
One of the particularly useful properties of polynomial spaces is the self-containment 
with respect to the derivatives, i.e., 
\begin{equation}
p^{(s)} \in \mathcal{P}_{m-s},\qquad 0\leq s\leq m-1,\quad p\in\mathcal{P}_m.
\end{equation}
To preserve this property for ECC-systems, one replaces the usual derivatives with related differential operators. We define $D_0\,f=f$
and  
\begin{equation} 
D_s\, f:= \tfrac{d}{d x}\, \tfrac{f}{w_s},\quad L_s:= D_s D_{s-1}\dots D_0,\quad 0\leq s\leq  m.
\end{equation}
By Leibniz's rule, $L_s$ is a differential operator of order $s$ with
variable coefficients. 

Just as with the polynomials and the well-known B-splines, we can now consider Chebyshev splines. For an ECC-system $(u_1,\dots, u_m)$ on
$[a,b]$ and knots $a = y_1 <\dots <y_{D+m}= b$, a \emph{spline} with these knots is a  function $B\in C^{m-2}(\R)$ satisfying 
\begin{equation} 
B\vert_{(y_\ell,y_{\ell+1})}\in\U_m,\qquad 1\leq \ell\leq D+m-1.
\end{equation}
This condition implies that $B$ is piecewise $C^{m-1}$ and at the knots $B^{(m-1)}$ has jump discontinuities and only one-sided
derivatives exist. The integer $m$ is the order of the spline. 
In this case  we define $L_{m-1}B$ at the  knots $y_i$ to be the right-sided limit 
\begin{equation}
L_{m-1}B(y_i) \coloneqq \lim\limits_{x\searrow y_i}L_{m-1}B(x).
\end{equation}  

A major result in spline theory is the existence of  Chebyshev B-splines (CB-splines) associated to knots $a\leq y_1 <\dots < y_{m+D}\leq b$.  Precisely, the CB-splines  are the  splines $\big(B_m^\ell\big)_{1\leq \ell\leq D}$ with controlled support 
\begin{equation}
\supp(B_m^\ell) =[y_\ell,y_{\ell+m}], \qquad 1\leq \ell\leq D.
\end{equation}
The CB-splines $B_m^\ell $ are unique up to normalization. The traditional construction is based on divided differences and can be found in full detail in Schumaker's book \cite[Sec.~9.4]{Schumaker2007}.

We conclude by considering matrices associated to CB-splines. The definition of ECC-systems calls for the collocation matrix with the standard derivatives. We adapt this to the new differential operators $L_s$. Given $t_1\leq t_2\leq \dots\leq t_{D}$, let
\begin{equation}
d_i:=\max\left\lbrace \ell : t_i =\dots = t_{i-\ell}\right\rbrace\leq m-1,\qquad 1\leq i\leq D.
\end{equation} 
Let $(B_m^1,\dots, B_m^D)$ be CB-splines associated with the knots $y_1<y_2<\dots < y_{m+D}$. We consider the collocation matrix
\begin{equation}\label{eq:CB_L_Matrix_Def}
\big( L_{d_i}B_m^\ell(t_i)\big)_{1\leq i,\ell\leq D}.
\end{equation}
This is the matrix tied to the Hermite interpolation problem
\begin{equation}
f=\sum\limits_{\ell=1}^D c_\ell B^\ell_m,\quad L_{d_i} f(t_i) = \zeta _i,\qquad 1\leq i\leq D.
\end{equation}
We can determine exactly when this matrix is invertible \cite[Thm.~9.33]{Schumaker2007}, cf. \cite[Thm.~4.67]{Schumaker2007}.
\begin{theorem}[Interlacing property, Schoenberg-Whitney conditions]\label{thm:SW-cond_OV} 
The determinant of the collocation matrix in \eqref{eq:CB_L_Matrix_Def} is nonnegative. It is positive if and only if 
\begin{equation}
t_i\in \begin{cases}
(y_i,y_{i+m}), & \text{ if }  d_i<m-1 \\
[y_i,y_{i+m}), & \text{ if } d_i = m-1,
\end{cases}
\qquad 1\leq i\leq D.
\end{equation}
\end{theorem}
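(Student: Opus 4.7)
The plan is to follow the classical spline-theoretic argument (as in Schumaker~\cite{Schumaker2007}, Thm.~9.33), broken into three ingredients: nonnegativity, necessity of the Schoenberg–Whitney (SW) conditions, and sufficiency via a zero-counting argument for Chebyshev splines.

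First I would establish nonnegativity. The CB-spline collocation matrix inherits total nonnegativity from the fact that any finite collection of CB-splines can be written as a product of nonnegative factors coming from the underlying ECC-system and the knot insertion/divided-difference construction. Concretely, restricted to a single knot interval each $B_m^\ell$ lies in $\U_m$, and the operators $L_{d_i}$ behave on $\U_m$ like the usual derivatives on polynomials. Using the ECC collocation matrix (whose determinant is positive by the very definition of an ECC-system) as a local building block, and interlacing via the smoothness $C^{m-2}$ of the splines, one obtains $\det \geq 0$ by a Cauchy–Binet style expansion.

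Second I would handle necessity. Suppose $t_i \notin [y_i, y_{i+m})$ or $t_i \notin (y_i, y_{i+m})$ with $d_i < m-1$. Using the support identity $\supp B_m^\ell = [y_\ell, y_{\ell+m}]$ together with the convention that $L_{d_i} B_m^\ell(t_i)$ is the right-sided value, one of the following staircase patterns appears: if $t_i < y_i$ (or $t_i = y_i$ and $d_i<m-1$), then $L_{d_i} B_m^\ell(t_i) = 0$ for every $\ell \geq i$, since $t_i$ lies strictly to the left of the support, and in the boundary case $t_i = y_i$ one needs $d_i = m-1$ to retain a nonzero value because only the $(m-1)$-st derivative of $B_m^i$ is nonzero at its left endpoint. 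By monotonicity in $i$, the rows $i, i+1, \dots, D$ then share a common $(D-i+1)\times (D-i+1)$ lower-right zero block, forcing the determinant to vanish. The symmetric argument handles $t_i \geq y_{i+m}$.

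Finally, and this is where the substance lies, sufficiency: assuming the SW conditions hold I must show the determinant is strictly positive. Equivalently, I need that the Hermite interpolation problem $f = \sum_\ell c_\ell B_m^\ell$, $L_{d_i} f(t_i) = 0$ for $1 \leq i \leq D$, has only the trivial solution. The key tool here is a zero-counting theorem for Chebyshev splines, the spline analogue of Lemma~\ref{thm:Tzeros}: a nontrivial $f \in \spanSpace(B_m^1,\dots,B_m^D)$ has at most $D-1$ Hermite-type zeros (counted with multiplicities of the $L_s$), provided the zeros are appropriately separated from the ends of the spline's total support. Assuming $c \neq 0$, let $[y_a, y_{b+m}]$ be the smallest knot interval containing $\supp f$. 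The SW conditions $t_i \in (y_i, y_{i+m})$ (with the boundary relaxation for $d_i = m-1$) force each $t_i$ prescribed inside this support to be a genuine interior zero of the appropriate order, yielding more zeros than the zero bound permits; hence $c = 0$. The main obstacle is precisely proving this zero-counting statement in the Chebyshev-spline setting: the polynomial/ECC proof via Rolle's theorem transfers, but one must carefully track how the operators $D_s$ interact with the breakpoints and how multiple zeros at a single $t_i$ (coming from $d_i > 0$) reduce to simple zeros after applying successive $D_s$ and a Rolle-type step across knots. Once that combinatorial zero bound is in hand, the SW conditions translate directly into an overdetermined count and the result follows.
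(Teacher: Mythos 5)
First, note that the paper does not prove this statement at all: it is quoted as a known result from spline theory, with the proof delegated to Schumaker \cite[Thm.~9.33]{Schumaker2007} (cf.\ \cite[Thm.~4.67]{Schumaker2007}). So there is no internal argument to compare against; your proposal has to be judged as a self-contained proof of the classical theorem, and as such it has genuine gaps. The decisive one is in the sufficiency step: you reduce everything to a zero-counting bound for Chebyshev splines (``a nontrivial $f\in\spanSpace(B_m^1,\dots,B_m^D)$ has at most $D-1$ Hermite-type zeros'') and then explicitly leave that lemma unproven, calling it the main obstacle. That lemma is precisely where the substance of the theorem lives, and moreover, as you state it, it is false without a careful zero-counting convention: a nontrivial element of the span can vanish identically on whole knot intervals (set some intermediate $c_\ell=0$), so one must either count interval zeros and knot zeros with the correct capped multiplicities (as in Schumaker's treatment of zeros of Tchebycheffian splines) or argue by induction/local linear independence to reduce to pieces where the spline has no interval zeros. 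Your remark that the zeros should be ``appropriately separated from the ends of the support'' does not address this; the problem is interior interval zeros, and also that not every $t_i$ satisfying the Schoenberg--Whitney condition need lie in $\supp f$ when only some coefficients are nonzero, so the count of usable zeros versus the dimension has to be done blockwise, not globally.

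The other two ingredients are also not in proof shape. The nonnegativity argument is only gestured at: ``total nonnegativity from nonnegative factors coming from knot insertion'' and a ``Cauchy--Binet style expansion'' are named but no factorization is exhibited, and the positivity of the ECC collocation determinants on a single knot interval does not by itself yield nonnegativity of the spline collocation determinant \eqref{eq:CB_L_Matrix_Def} across knots (this is exactly the content of the total-positivity theory for B-spline collocation matrices that one would have to develop or cite). Finally, in the necessity step the block structure is garbled: if $t_i<y_i$ (or $t_i=y_i$ with $d_i<m-1$), then because $t_j\le t_i$ for $j\le i$ it is the first $i$ \emph{rows} that vanish in columns $i,\dots,D$, i.e.\ an upper-right zero block showing that $i$ rows lie in an $(i-1)$-dimensional coordinate subspace; the lower-left block occurs in the symmetric case $t_i\ge y_{i+m}$. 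The underlying idea is correct and this slip is repairable, but as written the case analysis does not match the support geometry. In sum, the proposal is an outline of the classical route, not a proof: to close it you would need to either supply the Chebyshevian zero-counting theorem with its multiplicity conventions (and the blockwise reduction handling vanishing coefficients) or simply cite \cite{Schumaker2007} as the paper does.
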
  
The theorem says that it suffices for $t_i$ to be in the support of $B_m^i$. 
The invertibility of the collocation matrix is independent of the
normalization of the CB-splines, as the  normalization corresponds to multiplying the collocation matrix with an invertible diagonal matrix.

\begin{example}
As a small  example, we consider CB-splines of order 5. Let the knots be $0,1,\dots, 10$. Then the splines $B^0_5,\dots,B^5_5$ are supported on 
\begin{equation}
(0,5),\quad (1,6),\quad (2,7),\quad (3,8),\quad (4,9),\quad \text{ and }\quad (5,10),
\end{equation}
respectively. If we choose $t_1=t_2=0.5$, $t_3=4$, $t_4=t_5=t_6 = 7.7$, then the corresponding multiplicities are
\begin{equation}
d =\begin{pmatrix}
0 &1 & 0 & 0 & 1 & 2
\end{pmatrix}.
\end{equation}
The matrix is not invertible because $t_2 = 0.5$ is not in $(1, 6)$. Visually, the issue is clear:
\begin{align}
&\begin{pmatrix}
B_5^0(0.5) & B_5^1(0.5) & B_5^2(0.5) & B_5^3(0.5) & B_5^4(0.5) & B_5^5(0.5) \\[1ex]
L_1 B_5^0(0.5) & L_1 B_5^1(0.5) & L_1 B_5^2(0.5) & L_1 B_5^3(0.5) & L_1 B_5^4(0.5) & L_1 B_5^5(0.5) \\[1ex]
B_5^0(4) & B_5^1(4) & B_5^2(4) & B_5^3(4) & B_5^4(4) & B_5^5(4) \\[1ex]
B_5^0(7.7) & B_5^1(7.7) & B_5^2(7.7) & B_5^3(7.7) & B_5^4(7.7) & B_5^5(7.7) \\[1ex]
L_1 B_5^0(7.7) & L_1 B_5^1(7.7) & L_1 B_5^2(7.7) & L_1 B_5^3(7.7) & L_1 B_5^4(7.7) & L_1 B_5^5(7.7) \\[1ex]
L_2 B_5^0(7.7) & L_1 B_5^1(7.7) & L_2 B_5^2(7.7) & L_2 B_5^3(7.7) & L_2 B_5^4(7.7) & L_2 B_5^5(7.7)
\end{pmatrix} \\[2ex]
=& \begin{pmatrix}
B_5^0(0.5) & 0 & 0 & 0 & 0 & 0 \\[1ex]
L_1 B_5^0(0.5) & 0 & 0 & 0 & 0 & 0 \\[1ex]
B_5^0(4) & B_5^1(4) & B_5^2(4) & B_5^3(4) & 0 & 0 \\[1ex]
0 & 0 & 0 & B_5^3(7.7) & B_5^4(7.7) & B_5^5(7.7) \\[1ex]
0 & 0 & 0 & L_1 B_5^3(7.7) & L_1 B_5^4(7.7) & L_1 B_5^5(7.7) \\[1ex]
0 & 0 & 0 & L_2 B_5^3(7.7) & L_2 B_5^4(7.7) & L_2 B_5^5(7.7)
\end{pmatrix} \\
=& \begin{pmatrix}
\ * & 0 & 0 & 0 & 0 & 0\ \\[1ex]
\ * & 0 & 0 & 0 & 0 & 0\ \\[1ex]
\ * & * & * & * & 0 & 0\ \\[1ex]
\ 0 & 0 & 0 & * & * & *\ \\[1ex]
\ 0 & 0 & 0 & * & * & *\ \\[1ex]
\ 0 & 0 & 0 & * & * & *\
\end{pmatrix}
\end{align}
If instead we  sample at $t_1=t_2 = 1.5\in (1,5)$, $t_3=4$, $t_4=t_5=t_6 = 7.7$, all points are positioned correctly: $4$ is in $(2,7)$ and $7.7$ lies in $(3,8)$, $(4,9)$ and $(5,10)$. In this case, the collocation matrix is invertible.
\end{example}

\subsection{Periodic exponential B-splines}\label{subs:PEB-splines}
We have now gathered all the necessary background on CB-splines and continue with the special case of CB-splines which are shifts of a single CB-spline. For this, we require the following structure:
\begin{enumerate}[(S1)]
\item the knots are the integers $y_i= i-1$, $i\in\Z,$\footnote{The choice $y_i = i-1$ instead of $y_i=i$ is for the support condition \eqref{eq:ShiftInvariance_PEB} and consistency of Theorem \ref{thm:SW-cond_OV} with later applications.}
\item the weights $w_s$, $1\leq s\leq m$ are given by
\begin{equation}
w_s(x) = e^{\gamma_s x}r_s(x),\qquad \gamma_s\in\R, \qquad T_{\ell} r_s = r_s>0\quad \text{for all } \ell\in\Z.
\end{equation}
\end{enumerate}
CB-splines in this form are called \textit{periodic exponential
  B-splines} (PEB-splines) introduced in ~\cite{Ron88b, Ron88}. If $r_s\equiv 1$ for all $1\leq s\leq m$, then we refer to them as \textit{exponential B-splines} (EB-splines). The benefit of (S1) and (S2) is the fact that the CB-spline associated with the knots $y_\ell,\dots,y_{\ell+m}$ is (up to scaling) uniquely determined by any other of the B-splines, e.g., by $\varphi\coloneqq B_m^1$. 
It is clear that any shift $T_\ell \varphi$ is a Chebyshev spline with
\begin{equation}\label{eq:ShiftInvariance_PEB}
\supp(T_\ell \varphi) =[0+\ell,m+\ell],  \qquad \ell\in\Z.
\end{equation}
By the uniqueness, $(T_\ell \varphi)_{\ell\in\Z}$ differs up to positive factors $\left(\theta_\ell\right)_{\ell\in\Z}$ from the classically constructed CB-splines with divided differences $\left(B_m^\ell\right)_{\ell\in\Z}$ \cite[Sec.~9.4]{Schumaker2007}. The invertibility of the collocation matrix in Theorem \ref{thm:SW-cond_OV} is now tied to the integers. We take a closer look at the associated differential operators. For ease of reading, we define for all $1\leq s\leq m$ the functions
\begin{equation}
q_s(x)\coloneqq \frac{1}{r_s(x)}, \quad \text{so that} \quad w_s^{-1}(x) = e^{-\gamma_s x}q_s(x).
\end{equation}
Since both $q_s$ and $w_s^{-1}$ are strictly positive functions, they are in the same differentiability class. 
Moreover, $q_s$ is periodic. A simple calculation shows
\begin{equation}\label{eq:D_s_exact}
\begin{split}
D_s f(x) &= \tfrac{d}{d x}\left[ e^{-\gamma_s x}q_s(x)\,\cdot f(x)\right] \\
&= -\gamma_s e^{-\gamma_s x}q_s(x)\,\cdot f(x)  + e^{-\gamma_s x}q_s'(x)\,\cdot f(x) + e^{-\gamma_s x}q_s(x)\,\cdot f'(x) \\
&= \tfrac{1}{w_s(x)} \,\Big( \Big(-\gamma_s +\tfrac{q_s'(x)}{q_s(x)}\Big)\cdot \mathrm{id} +\tfrac{d}{d x}\Big)f(x)
\end{split}
\end{equation}
and
\begin{align}
D_s\, T_\ell\, f(x) &= \tfrac{d}{d x}\left[ \ e^{-\gamma_s x}q_s(x) f(x-\ell)\right] \\
&= \tfrac{d}{d x}\left[ e^{-\gamma_s \ell}\cdot e^{-\gamma_s (x-\ell)}q_s(x-\ell) f(x-\ell)\right]\\ 
&= e^{-\gamma_s \ell}\ T_\ell\, D_s\, f (x),\qquad \ell\in\Z.
\end{align}
We obtain the commutator rule 
\begin{equation}\label{eq:L_sT_l}
L_s\, T_\ell f = e^{-\ell\,\sum_{k=1}^s\gamma_k}\, T_\ell \,L_s\, f.
\end{equation}
The differential operators $L_s$ are natural in the context of ECC-systems, but in practice, information usually comes as samples of standard derivatives. 
Therefore, in the following statements  we derive the relation between the standard derivatives
and the  differential operators associated with an ECC-system.  

\begin{lemma}\label{lemma:induction_Ls_Dj}
Let $r_s\in C^{m-s}(\R)$, $1\leq s\leq m$, be $1$-periodic, strictly positive functions and $\gamma_1,\dots, \gamma_m\in\R$. Let $L_1,\dots, L_m$ be the differential operators associated with the ECC-system induced by
\begin{equation}
w_s(x) = e^{\gamma_s x}r_s(x),\qquad x\in\R.
\end{equation}
Set for $0\leq s\leq m$
\begin{equation}
v_s(x) = \prod\limits_{k=1}^{s-1} \frac{1}{w_k(x)},\qquad x\in\R.
\end{equation}
For each $0\leq s\leq m$ there exist periodic functions $p_{s,k}\in C^{m-1-s}(\R)$ satisfying
\begin{equation}
L_s f(x) = v_s (x)\cdot \bigg(f^{(s)}(x)+\sum\limits_{k=0}^{s-1} p_{s,k}(x)\cdot f^{(k)}(x)\bigg).
\end{equation}
In a matrix notation, this can be written as
\begin{equation}\label{eq:Ls_matrix_relation}
\big( L_0,\ L_1,\ L_2\ ,\dots,\ L_{s}\big)^t = \V_s \cdot \P_s\cdot 
\Big( \id, \ \tfrac{d}{d x},\ 
\tfrac{d^2}{d x^2},\ 
\dots,\ 
\tfrac{d^s}{d x^s}\Big)^t,
\end{equation}
where $\V_s:\R\to \R^{(s+1)\times (s+1)}$ is
the diagonal matrix  
\begin{equation}
\V_s(x)=\mathrm{diag}(v_0(x),\dots, v_{s}(x))
\end{equation}
and $\P_s:\R\to \R^{(s+1)\times (s+1)}$ is the periodic lower triangular matrix
\begin{equation}
\P_s(x) = \begin{pmatrix}
    1   &   0  & 0  & \cdots  & 0\\[1ex]
p_{1,0}(x) &   1  & 0   & \cdots    & 0\\[1ex]
p_{2,0}(x) & p_{2,1}(x)   & 1  & \cdots  & 0\\[1ex]
\vdots  & \vdots & \vdots  & \ddots   & \vdots\\[1ex]
p_{s,0}(x) & p_{s,1}(x)  & \cdots  & p_{s,s-1}(x) & 1
\end{pmatrix},\qquad x\in\R.
\end{equation}
In particular, $\V_s(x)$ and $\P_s(x)$ are invertible for all $x\in\R$ and are continuous on $\R$.
\end{lemma}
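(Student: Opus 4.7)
The plan is to proceed by induction on $s$, using the recursion $L_{s+1} = D_{s+1}\,L_s$ together with the explicit formula \eqref{eq:D_s_exact}. The base case $s=0$ is trivial since $L_0 = \mathrm{id}$ matches $v_0 = 1$ (empty product) and empty sum. For the inductive step, suppose $L_s f = v_s \bigl(f^{(s)} + \sum_{k=0}^{s-1} p_{s,k} f^{(k)}\bigr)$ with periodic $p_{s,k}\in C^{m-1-s}$. Writing $\tilde f_s := f^{(s)} + \sum_{k=0}^{s-1} p_{s,k} f^{(k)}$ and applying $D_{s+1} g = \tfrac{1}{w_{s+1}}\bigl(\alpha_{s+1} g + g'\bigr)$ with $\alpha_{s+1} := -\gamma_{s+1} + q_{s+1}'/q_{s+1}$, the Leibniz rule yields
\begin{equation}
L_{s+1} f \;=\; \frac{v_s}{w_{s+1}}\left( \tilde f_s' + \Bigl(\alpha_{s+1} + \frac{v_s'}{v_s}\Bigr) \tilde f_s \right).
\end{equation}
Expanding $\tilde f_s'$ and collecting by order of derivative of $f$ exhibits $L_{s+1} f$ in the required form, with leading coefficient $v_s/w_{s+1} = v_{s+1}$, new top coefficient $p_{s+1,s} = \alpha_{s+1} + v_s'/v_s + p_{s,s-1}$ (with $p_{s,-1}\equiv 0$), and the recursion $p_{s+1,k} = p_{s,k-1} + p_{s,k}' + (\alpha_{s+1} + v_s'/v_s)\,p_{s,k}$ for $0\le k\le s-1$.

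The substantive points to verify are periodicity and smoothness of the $p_{s+1,k}$. For periodicity, I would use that $w_k(x) = e^{\gamma_k x} r_k(x)$ gives the logarithmic derivative $w_k'/w_k = \gamma_k + r_k'/r_k$, which is $1$-periodic since a constant is periodic of every period and $r_k'/r_k$ is periodic. Similarly $\alpha_{s+1} = -\gamma_{s+1} + q_{s+1}'/q_{s+1}$ is periodic because $q_{s+1} = 1/r_{s+1}$ is. Since $v_s'/v_s$ is (up to sign) a sum of terms $w_k'/w_k$, it is periodic as well. Thus the coefficients appearing in the recursion for $p_{s+1,k}$ are periodic, and the inductive hypothesis carries $p_{s,k}$'s periodicity into $p_{s+1,k}$. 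For smoothness, $r_k\in C^{m-k}$ implies $w_k'/w_k \in C^{m-k-1}$ and $\alpha_{s+1}\in C^{m-s-2}$; combined with the inductive hypothesis $p_{s,k}\in C^{m-1-s}$, the recursion (which takes one derivative of $p_{s,k}$) produces coefficients in $C^{m-2-s} = C^{m-1-(s+1)}$, as required.

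The matrix formulation then follows by listing the identities for $s=0,1,\dots,s$ as rows, factoring out the diagonal of leading coefficients as $\V_s(x) = \mathrm{diag}(v_0,\dots,v_s)$, and observing that what remains is the action on $(f,f',\dots,f^{(s)})^t$ of the lower triangular matrix $\P_s(x)$ with $1$'s on the diagonal and off-diagonal entries $p_{s,k}(x)$. Invertibility and continuity are then immediate: $\V_s(x)$ is diagonal with strictly positive entries since $w_k = e^{\gamma_k x} r_k > 0$, so $v_k > 0$; $\P_s(x)$ is lower triangular with unit diagonal, so $\det \P_s \equiv 1$; continuity of both matrices on $\R$ is inherited from the $C^{m-1-s}\subseteq C^0$ regularity of their entries.

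The main obstacle I anticipate is the bookkeeping for periodicity: a priori the coefficients contain factors like $\gamma_k$ and $r_k'/r_k$ separately, and one must observe that constants do not spoil $1$-periodicity and that every non-periodic contribution from the exponential factors $e^{\gamma_k x}$ cancels internally in the logarithmic derivatives. The derivative-order counting and the factorization identity $v_{s+1} = v_s/w_{s+1}$ are otherwise mechanical.
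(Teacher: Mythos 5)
Your proposal is correct and follows essentially the same route as the paper: induction on $s$ via $L_{s+1}=D_{s+1}L_s$ and \eqref{eq:D_s_exact}, using $v_{s+1}=v_s/w_{s+1}$ and the identity $v_s'/v_s=\sum_{n}\bigl(-\gamma_n+\tfrac{q_n'}{q_n}\bigr)$ to arrive at the same coefficient recursion $p_{s+1,k}=p_{s,k-1}+p_{s,k}'+p_{s,k}\sum_{n=1}^{s+1}\bigl(-\gamma_n+\tfrac{q_n'}{q_n}\bigr)$. Your explicit bookkeeping of periodicity and of the regularity count $p_{s,k}\in C^{m-1-s}$, and the determinant argument for $\P_s$, only spell out details the paper leaves implicit.
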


\begin{proof}
We prove this formula by induction. Clearly,  $f^{(0)} = L_0 f$. For the induction step we use \eqref{eq:D_s_exact} and $w_k^{-1}(x) = e^{-\gamma_k x} q_k(x)$. 
\begin{align}
L_{s+1} f &= D_{s+1} L_s f  \overset{\scriptscriptstyle \eqref{eq:D_s_exact}}{=}
\frac{1}{w_{s+1}} \,\bigg( \bigg(-\gamma_{s+1} +\frac{q_{s+1}'}{q_{s+1}}\bigg)\cdot L_s f + (L_s f)'\bigg) \\[.75ex]
 \underset{\scriptscriptstyle \text{step}}{\overset{\scriptscriptstyle \text{Induction}}{\textcolor{white}{=}}}&= \frac{1}{w_{s+1}} \,\bigg\{ \bigg(-\gamma_{s+1} +\frac{q_{s+1}'}{q_{s+1}}\bigg)\cdot v_s \cdot\sum\limits_{k=0}^s p_{s,k}\cdot f^{(k)} \\[.75ex]
& \qquad\quad+ v_s \cdot  \bigg(\,\sum\limits_{k=0}^s p_{s,k}\cdot f^{(k)}\bigg)' + v_s'\cdot\sum\limits_{k=0}^s p_{s,k}\cdot f^{(k)} \bigg)\\[.75ex]
&= \frac{1}{w_{s+1}} \, \bigg(-\gamma_{s+1} +\frac{q_{s+1}'}{q_{s+1}}\bigg)\cdot v_s \cdot\sum\limits_{k=0}^s p_{s,k}\cdot f^{(k)} \bigg\}\\[.75ex]
& \qquad\quad
+ \frac{v_s}{w_{s+1}} \cdot \sum\limits_{k=0}^s p_{s,k}'\cdot f^{(k)} +\frac{v_s}{w_{s+1}} \cdot \sum\limits_{k=0}^s p_{s,k}\cdot f^{(k+1)}\\
&\quad\quad\quad+ \frac{v_s}{w_{s+1}} \cdot\sum\limits_{n=1}^s\bigg(-\gamma_n+\frac{q_n'}{q_n}\bigg)\cdot\sum\limits_{k=0}^s p_{s,k}\cdot f^{(k)},
\end{align}
where we used
\begin{equation}
v_s'= \sum\limits_{n=1}^s {w_n}\cdot v_s\cdot\, \left(w_n^{-1}\right)' = \sum\limits_{n=1}^s {w_n}\cdot v_s\cdot\, w_n^{-1}\bigg(-\gamma_n +\frac{q'_{n}}{q_{n}}\bigg) = v_s\cdot \sum\limits_{n=1}^s \bigg(-\gamma_n +\frac{q'_{n}}{q_{n}}\bigg).
\end{equation}
for the last equality. We recall that $v_{s+1} = \frac{v_s}{w_{s+1}}$ and $p_{s,s} =1$ by the induction hypothesis. We set $p_{s,-1}\coloneqq 0$ to obtain
\begin{align}
L_{s+1} f
&= v_{s+1}
\cdot\bigg( f^{(s+1)}+\sum\limits_{k=0}^s \left(\left(-\gamma_{s+1} +\frac{q_{s+1}'}{q_{s+1}}\right)\cdot p_{s,k} +p_{s,k}' + p_{s,k-1}\right)\cdot f^{(k)} \bigg)\\[.75ex]
&\quad\quad\quad+ v_{s+1} 
\sum\limits_{n=1}^s \left(-\gamma_n+\frac{q_n'}{q_n}\right) \cdot\sum\limits_{k=0}^s p_{s,k}\cdot f^{(k)} \\[.75ex]
&= v_{s+1}\cdot f^{(s+1)}+ v_{s+1} \cdot \sum\limits_{k=0}^s \underbrace{\left(p_{s,k}' + p_{s,k-1}+p_{s,k} \sum\limits_{n=1}^{s+1} \left(-\gamma_{n} +\frac{q_{n}'}{q_{n}}\right)\,\right)}_{=:p_{s+1,k}}\cdot f^{(k)},
\end{align}
concluding the proof.
\end{proof}

\begin{lemma}\label{lemma:M_invertible} 
   Let the knots  $y_i= i-1 , i=1, \dots , D$ and the weights $w_s$, $1\leq s \leq m$ be given and let $B_m^1,\dots, B_m^D=T_{D-1}B_m^1$ be the associated PEB-splines. 
   Then the determinant of the collocation matrix 
 \begin{equation}\label{eq:CB_Matrix_Def}
M \begin{pmatrix}
t_1,\dots, t_D \\
B_m^1,\dots, B_m^D
\end{pmatrix} \coloneqq 
\Big( (B_m^\ell)^{(d_i)}(t_i)\Big)_{1\leq i,\ell\leq D} = \Big( T_{\ell-1}(B_m^1)^{(d_i)}(t_i)\Big)_{1\leq i,\ell\leq D}
\end{equation}
is nonnegative. It is positive if and only if 
\begin{equation}
t_i\in \begin{cases}
(i-1,i+m-1), & d_i<m-1 \\
[i-1,i+m-1), & d_i = m-1,
\end{cases}
\qquad 1\leq i\leq D.
\end{equation}
\end{lemma}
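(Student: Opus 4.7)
My plan is to reduce the statement to Theorem~\ref{thm:SW-cond_OV}, which characterizes invertibility of the $L_s$-based collocation matrix $\bigl(L_{d_i} B_m^\ell(t_i)\bigr)_{i,\ell}$, by converting between the two matrix forms via the change-of-basis identity from Lemma~\ref{lemma:induction_Ls_Dj}. Since PEB-splines coincide, up to positive normalization constants, with the CB-splines associated to the integer knots $y_i=i-1$, and since invertibility of a collocation matrix is unaffected by column rescaling, Theorem~\ref{thm:SW-cond_OV} will apply to the $L_s$-collocation matrix with the interlacing condition $t_i\in(i-1,i+m-1)$ (or $[i-1,i+m-1)$ if $d_i=m-1$), which matches exactly the condition claimed here.

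\textbf{Key steps.} First I would partition $\{1,\dots,D\}$ into maximal clusters $\{i,i+1,\dots,i+k\}$ of consecutive indices with equal sampling points; by the definition of $d_i$, such a cluster is characterized by $t_i=\dots=t_{i+k}$ and $d_{i+j}=j$ for $0\le j\le k$. Fixing a column $\ell$ and applying Lemma~\ref{lemma:induction_Ls_Dj} to $f=B_m^\ell$ at $x=t_i$, the matrix identity~\eqref{eq:Ls_matrix_relation} yields
\begin{equation}
\begin{pmatrix} L_0 B_m^\ell(t_i) \\ \vdots \\ L_k B_m^\ell(t_i) \end{pmatrix}
= \V_k(t_i)\,\P_k(t_i)\,
\begin{pmatrix} B_m^\ell(t_i) \\ \vdots \\ (B_m^\ell)^{(k)}(t_i) \end{pmatrix},
\end{equation}
so that within the cluster the $L_s$-rows are obtained from the standard-derivative rows by left-multiplication with $\V_k(t_i)\P_k(t_i)$. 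Assembling these per-cluster identities across all clusters produces a factorization $M_L = T \cdot M_{\mathrm{std}}$, where $M_{\mathrm{std}}$ denotes the matrix $M\binom{t_1,\dots,t_D}{B_m^1,\dots,B_m^D}$ of interest, $M_L$ denotes the $L_s$-collocation matrix, and $T$ is block-diagonal with one block $\V_k(t_i)\P_k(t_i)$ per cluster.

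\textbf{Conclusion and main obstacle.} Each diagonal block of $T$ is lower triangular (since $\P_k$ is lower triangular with $1$'s on the diagonal) with strictly positive diagonal entries $v_0(t_i),\dots,v_k(t_i)$, so $\det T>0$. Therefore $\det M_{\mathrm{std}}$ and $\det M_L$ agree in sign, and the Schoenberg-Whitney characterization from Theorem~\ref{thm:SW-cond_OV} transfers without change. The main subtlety I expect is at integer knots $t_i\in\Z$, where $(B_m^\ell)^{(m-1)}$ and consequently $L_{m-1}B_m^\ell$ have jump discontinuities; I would adopt on both sides the right-sided-limit convention used for $L_{m-1}B$ in Section~\ref{sec:ECC}, and since the matrix-valued functions $\V_k$ and $\P_k$ are continuous, the transformation respects this convention automatically. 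Beyond this technicality, the argument is essentially bookkeeping.
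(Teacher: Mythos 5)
Your proposal is correct and follows essentially the same route as the paper: both factor the standard-derivative collocation matrix against the $L_s$-collocation matrix via the block-diagonal change-of-basis matrices $\V_{d_i}(t_i)\P_{d_i}(t_i)$ from Lemma~\ref{lemma:induction_Ls_Dj} (one block per cluster of repeated points), observe that this factor has positive determinant, and then invoke Theorem~\ref{thm:SW-cond_OV} with $y_i=i-1$. Your extra remarks on the positive-normalization issue and the right-sided-limit convention at integer knots are consistent with what the paper states elsewhere (after Theorem~\ref{thm:SW-cond_OV} and in Section~\ref{sec:ECC}) and do not change the argument.
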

\begin{proof}
We first address the second equality in the definition of the matrix $M$. As already noted in \eqref{eq:ShiftInvariance_PEB}, the PEB-splines  for equispaced knots are shifts of a single PEB-spline, namely, $B_m^\ell = T_{\ell - 1} B_m^1$. Since differentiation and translation operators commute, we have $(B_m^\ell)^{(d_i)}(t_i) = T_{\ell-1}(B_m^1)^{(d_i)}(t_i)$. 

Let as before $d_i$ count the previous occurrences of the point $t_i$, i.e.,
\begin{equation}
d_i=\max\left\lbrace \ell : t_i =\dots = t_{i-\ell}\right\rbrace\leq m-1,\qquad 1\leq i\leq D.
\end{equation} 
Then $t = (t_1,t_2, t_3, \dots, t_D)$ can be rewritten as 
$$
  t = \Big( \underbrace{t_1,\dots, t_1}_{{d_1+1} \text{-times}},
  \underbrace{t_{d_1+2},\dots, t_{d_1+2}}_{d_2+1  \text{-times}}, \underbrace{t_{d_1+d_2+3},\dots, t_{d_1+d_2+3}}_{d_3+1  \text{-times}} , \dots, \underbrace{t_D,\dots, t_D}_{{d_N+1} \text{-times}}\Big).
$$
We define the block-diagonal matrices 
\begin{equation}
\PP(t):= \begin{pmatrix}
    \P_{d_1}(t_1)   &   0  & 0  & \cdots  & 0\\[1ex]
0 &   \P_{d_2}(t_{d_1+2})  & 0  & \cdots  & 0\\[1ex]
0 & 0   & \P_{d_3}(t_{d_1+d_2+3})  & \cdots  & 0\\[1ex]
\vdots  & \vdots  & \vdots  & \ddots  & \vdots\\[1ex]
0 & 0   & \cdots  & 0 & \P_{d_N}(t_D)
  \end{pmatrix}\in \R^{D\times D},
\end{equation}
\begin{equation}
\VV(t):= \begin{pmatrix}
    \V_{d_1}(t_1)   &   0  & 0  & \cdots  & 0\\[1ex]
0 &   \V_{d_2}(t_{d_1+2})  & 0  & \cdots  & 0\\[1ex]
0 & 0  & \V_{d_3}(t_{d_1+d_2+3})  & \cdots & 0\\[1ex]
\vdots  & \vdots  & \vdots  & \ddots  & \vdots\\[1ex]
0 & 0   & \cdots  & 0 & \V_{d_N}(t_D)
  \end{pmatrix} \in \R^{D\times D}.
\end{equation}
Since each $\P_{d_i}$ has determinant $1$,  the block  matrix $\PP(t)$ also has determinant $1$. By construction, the matrix $\VV(t)$ is a diagonal matrix with strictly positive diagonal entries. 
By Lemma \ref{lemma:induction_Ls_Dj}, the collocation matrix with the associated differential operators and the collocation matrix with the standard derivatives are related by 
\begin{equation}
    \left( L_{d_i}B_m^\ell(t_i)\right)_{1\leq i,\ell\leq D} = \VV(t)\cdot\PP(t)\cdot M \begin{pmatrix}
t_1,\dots, t_D \\
B_m^1,\dots, B_m^D
\end{pmatrix}.
\end{equation}
The claim follows from the multiplicativity of the determinant and Theorem \ref{thm:SW-cond_OV} with $y_i=i-1$.
\end{proof}

We finally relate the collocation matrix to sampling with multiplicities.

\begin{corollary}\label{cor:matrix_multiplicities}
Let $\varphi$ be a PEB-spline of order $m$ supported on the interval $[0,m]$. Further let $x= \left(x_1,x_2,\dots,x_N\right)\in\R^N$ be a vector of pairwise distinct sampling points, $x_1 <\dots < x_N$, with multiplicities $\mu = \left(\mu_1,\dots, \mu_N\right)\in\{0,\dots, m-1\}^N$. Set $D = \sum_{j=1}^N (1+\mu_j)$. The Hermite interpolation problem
\begin{equation}\label{eq:Hermite_to_translate}
    f=\sum\limits_{\ell=\ell_0}^{\ell_0+D-1} c_\ell T_\ell \varphi,\quad f^{(s)}(x_j) = \y_{j,s},\quad \ 0\leq s\leq \mu_j,\ 1\leq j\leq N
\end{equation}
has a unique solution if and only if 
\begin{equation}\label{eq:x_pos_condition}     \begin{split}
    x_j &\in (\mu_j+\ell_0, m+ \ell_0) +\sum\limits_{n=1}^{j-1} \left(1+\mu_n\right),\qquad \text{if}\ \ \mu_j <m-1, \\[.5ex]
    x_j& \in [\mu_j+\ell_0, m+ \ell_0) +\sum\limits_{n=1}^{j-1} \left(1+\mu_n\right),\qquad \text{if}\ \ \mu_j = m-1. 
\end{split}
\end{equation}

In this case, the solution depends linearly on the interpolation data.
\end{corollary}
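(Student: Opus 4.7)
The plan is to unfold the Hermite interpolation problem into a square linear system of size $D$ and recognize its matrix as the collocation matrix analyzed in Lemma~\ref{lemma:M_invertible}. First, I reduce to the case $\ell_0 = 0$: setting $g(y) \coloneqq f(y + \ell_0)$ and $\tilde x_j \coloneqq x_j - \ell_0$, the expansion becomes $g = \sum_{k=1}^{D} c_{\ell_0+k-1}\, T_{k-1}\varphi$, with interpolation conditions $g^{(s)}(\tilde x_j) = f^{(s)}(x_j) = \y_{j,s}$. By the structural assumptions (S1)--(S2) on $\varphi$, each shift $T_{k-1}\varphi$ is precisely the PEB-spline $B_m^k$ appearing in Lemma~\ref{lemma:M_invertible}.

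Next, I list the sample locations with their multiplicities as a single $D$-vector, namely
\begin{equation}
t = \bigl(\underbrace{\tilde x_1, \ldots, \tilde x_1}_{1+\mu_1},\ \underbrace{\tilde x_2, \ldots, \tilde x_2}_{1+\mu_2},\ \ldots,\ \underbrace{\tilde x_N, \ldots, \tilde x_N}_{1+\mu_N}\bigr).
\end{equation}
With this ordering, the counter $d_i$ cycles through $0, 1, \ldots, \mu_j$ inside the block associated with $\tilde x_j$, so the matrix of the Hermite system coincides with the collocation matrix of Lemma~\ref{lemma:M_invertible}. Unique solvability is therefore equivalent to invertibility of that matrix, which, again by Lemma~\ref{lemma:M_invertible}, is equivalent to the Schoenberg--Whitney conditions $t_i \in (i-1, i+m-1)$ when $d_i < m-1$ and $t_i \in [i-1, i+m-1)$ when $d_i = m-1$.

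Finally, I translate these per-index conditions into the per-sample conditions of the statement. Within the block for $\tilde x_j$, write $\Sigma_j \coloneqq \sum_{n=1}^{j-1}(1+\mu_n)$; the indices are $i(s) = 1 + s + \Sigma_j$ for $s = 0, 1, \ldots, \mu_j$, with $d_{i(s)} = s$, $i(s)-1 = s + \Sigma_j$ and $i(s)+m-1 = m + s + \Sigma_j$. Intersecting the open intervals $(s + \Sigma_j,\, s + m + \Sigma_j)$ over admissible $s$, and including the left endpoint only when $s = m-1$ occurs (that is, when $\mu_j = m-1$), the tightest lower bound comes from $s = \mu_j$ and the tightest upper bound from $s = 0$, giving \eqref{eq:x_pos_condition} after undoing the shift by $\ell_0$. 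When these conditions hold, the coefficient vector is the image of the data under the inverse of the collocation matrix, so the solution depends linearly on $(\y_{j,s})$. I expect no real obstacle: the argument is essentially bookkeeping, and the only delicate point is combining the open and closed endpoints at the last index of each block, which the intersection above handles.
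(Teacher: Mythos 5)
Your proposal is correct and follows essentially the same route as the paper: both list the sample points with their multiplicities as a $D$-vector, identify the system matrix with the collocation matrix of Lemma~\ref{lemma:M_invertible}, and obtain \eqref{eq:x_pos_condition} by intersecting the per-index Schoenberg--Whitney intervals over each block (your preliminary shift to $\ell_0=0$ is only a cosmetic difference from the paper's handling of $\ell_0$ inside the matrix $\MM(x,\mu,\ell_0)$).
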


If the points satisfy \eqref{eq:x_pos_condition}, we say that they \emph{satisfy the Schoenberg-Whitney conditions}.
\begin{proof}
To apply the interlacing property to the interpolation problem \eqref{eq:Hermite_to_translate}, we set 
\begin{equation}
t = \Big( \underbrace{x_1,\dots, x_1}_{{\mu_1+1} \text{-times}}, \underbrace{x_2,\dots, x_2}_{{\mu_2+1} \text{-times}},\dots, \underbrace{x_N,\dots, x_N}_{{\mu_N+1} \text{-times}}\Big)\in \R^D, 
\end{equation}
to be precise,
\begin{equation}
t_i = x_j \qquad \text{ if } \qquad \sum\limits_{n = 1}^{j-1} (1+\mu_n) < i\leq \sum\limits_{n= 1}^{j} (1+\mu_n).
\end{equation} 
Then the vector of multiplicities is given by
\begin{equation}
d = \Big( 0, 1, \dots, \mu_1, 0, 1,\dots, \mu_2, \dots, 0,1,\dots,\mu_N\Big)\in \big\lbrace 0,\dots, m-1\big\rbrace^D.
\end{equation}
Accordingly, we set $\y\in\C^D$ to be the list of the samples $\y = (\y _i)_{i=1, \dots , D}$, 
\begin{equation}
\y = \Big( \y_{1,0},\, \y_{1,1},\, \y_{1,2},\, \dots,\,
\y_{1,\mu_1},\, \y_{2,0},\, \y_{2,1},\,\y_{2,2},\,\dots,\,
\y_{2,\mu_2},\, \dots,\, \y_{N,0},\,\y_{N,1},\,
\y_{N,2}\dots,\,\y_{N,\mu_n}\Big).
\end{equation}
With this notation, the interpolation problem \eqref{eq:Hermite_to_translate} can be recast as
\begin{equation}
f=\sum\limits_{\ell=\ell_0}^{\ell_0+D-1} c_\ell\, T_\ell \varphi,\quad f^{(d_i)}(t_i) = \y_i,\qquad 1\leq i\leq D.
\end{equation}
The corresponding collocation matrix is 
\begin{equation} 
M \begin{pmatrix}
t_1,\dots, t_D \\
T_{\ell_0}\varphi,\dots, T_{\ell_0+D-1}\varphi
\end{pmatrix} = \Big( T_{\ell-1}\varphi^{(d_i)}(t_i-\ell_0)\Big)_{1\leq i,\ell\leq D}.
\end{equation}
For further reference, we denote it by 
\begin{equation}\label{eq:M_corollary_def}
\MM(x,\mu,\ell_0) \coloneqq M 
\begin{pmatrix}
t_1,\dots, t_D \\
T_{\ell_0}\varphi,\dots, T_{\ell_0+D-1}\varphi
\end{pmatrix} . 
\end{equation}
By Lemma \ref{lemma:M_invertible}, the collocation matrix is
invertible, and thus a unique solution of \eqref{eq:Hermite_to_translate} exists, if and only if $t_i -\ell_0\in (i-1,i+m-1)$ for all $1\leq i\leq D$ (with the modification $t_i-\ell_0\in [i-1,i+m-1)$ if $d_i=m-1$). Expressed with the sampling points $x_j$, the collocation matrix is invertible if and only if
\begin{align}
x_j \ &\in \ \bigcap_{s = 0}^{\mu_j} \
\Big(\left(s, s+m\right)\ +\ell_0+\sum\limits_{n=1}^{j-1} \left(1+\mu_n\right)\Big)
= (\mu_j+\ell_0, m+\ell_0)+\sum\limits_{n=1}^{j-1} \left(1+\mu_n\right). 
\end{align}
for all $1\leq j \leq N$. The second condition in \eqref{eq:x_pos_condition} is the adaptation for $\mu_j=m-1$.  
\end{proof}

\subsection{Stability of PEB-Splines}\label{subsec:stability_PEB} 
To use PEB-splines as generators of \sis s, we need to verify the stability of PEB-splines and their derivatives.

\begin{lemma}\label{lemma:PEB_stable_shifts}
Let $\varphi$ be a PEB-spline of order $m$. Then 
\begin{enumerate}[(i)]
\item The generator $\varphi$ has stable integer shifts.
\item For all $0\leq s\leq m-1$, the tuple $\left(\varphi, \dots, \varphi^{(s)}\right)$ has stable integer shifts.
\end{enumerate}
\end{lemma}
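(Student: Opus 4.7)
The plan is to reduce part (i) to Theorem \ref{thm:stable_shifts} by verifying $\ell^\infty$-independence of the integer translates of $\varphi$, and then to bootstrap part (ii) from part (i) by observing that $\varphi$ itself appears as the first component of $\Phi = (\varphi,\varphi',\dots,\varphi^{(s)})$.

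For (i), since a PEB-spline $\varphi$ of order $m$ is bounded with $\supp\varphi\subseteq [0,m]$, Theorem \ref{thm:stable_shifts} reduces the stability claim to showing that $\sum_{\ell\in\Z}c_\ell\, T_\ell\,\varphi\equiv 0$ with $c\in\ell^\infty(\Z)\setminus\{0\}$ is impossible. I would argue locally: fix $k\in\Z$ and restrict the (locally finite) sum to $[k,k+1)$. Only the translates $T_{k-m+1}\varphi,\dots,T_k\varphi$ contribute there, so the hypothesis forces
\begin{equation}
\sum_{\ell=k-m+1}^{k} c_\ell\, T_\ell\,\varphi(t)=0\qquad\text{for every }t\in(k,k+1).
\end{equation}
Choose any $m$ distinct points $t_1<\dots<t_m$ in $(k,k+1)$ and apply Lemma \ref{lemma:M_invertible} (equivalently Corollary \ref{cor:matrix_multiplicities}) with multiplicities $d_i=0$ and shift parameter $\ell_0=k-m+1$. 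The Schoenberg--Whitney condition $t_j\in(k-m+j,k+j)$ is satisfied automatically since $t_j\in(k,k+1)$ and $1\leq j\leq m$, so the associated collocation matrix is invertible. This forces $c_{k-m+1}=\dots=c_k=0$, and since $k$ was arbitrary we conclude $c\equiv 0$. Theorem \ref{thm:stable_shifts} then yields stability.

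For (ii), the lower bound is immediate from (i): because the first component of $T_\ell\Phi$ is $T_\ell\varphi$,
\begin{equation}
\Big\|\sum_{\ell\in\Z} c_\ell T_\ell\Phi\Big\|_p^{\,p}=\sum_{k=0}^{s}\Big\|\sum_{\ell\in\Z} c_\ell T_\ell\varphi^{(k)}\Big\|_p^{\,p}\ \geq\ \Big\|\sum_{\ell\in\Z} c_\ell T_\ell\varphi\Big\|_p^{\,p}\ \gtrsim\ \|c\|_p^{\,p}.
\end{equation}
For the upper bound, recall that a PEB-spline of order $m$ satisfies $\varphi\in C^{m-2}(\R)$, so for $0\leq k\leq m-2$ the derivative $\varphi^{(k)}$ is continuous and compactly supported in $[0,m]$, and $\varphi^{(m-1)}$ is piecewise continuous with finitely many jump discontinuities, still bounded and supported in $[0,m]$. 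In every case, the elementary estimate given in Section \ref{sec:SI-spaces} for compactly supported bounded functions yields a constant $C_k$ with $\|\sum_\ell c_\ell T_\ell\varphi^{(k)}\|_p\leq C_k\|c\|_p$. Summing over $0\leq k\leq s$ gives the desired upper inequality.

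The main technical point is the local reduction and the correct bookkeeping of the Schoenberg--Whitney interlacing in (i); once that invertibility is secured, both (i) and (ii) follow with only routine estimates.
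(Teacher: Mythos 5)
Your proposal is correct and follows the paper's overall strategy for both parts: reduce (i) to $\ell^\infty$-independence of the translates via Theorem \ref{thm:stable_shifts}, exploit the local finiteness of $\sum_{\ell}c_\ell T_\ell\varphi$ on a unit interval, and deduce (ii) from (i) by noting that the vector norm dominates the norm of the first component, with the upper bound coming from the elementary estimate for bounded, compactly supported generators (which indeed covers the piecewise continuous $\varphi^{(m-1)}$). The one place where you genuinely deviate is the local step in (i): the paper fixes an index $\ell_0$ with $c_{\ell_0}\neq 0$, restricts to the single interval $(\ell_0+m-1,\ell_0+m)$, where only $T_{\ell_0}\varphi,\dots,T_{\ell_0+m-1}\varphi$ are active, and invokes the zero-counting Lemma \ref{thm:Tzeros} for Chebyshev systems to conclude that the restriction cannot vanish identically; you instead pick $m$ distinct points in $(k,k+1)$ and use invertibility of the collocation matrix through the Schoenberg--Whitney conditions (Lemma \ref{lemma:M_invertible}, equivalently Corollary \ref{cor:matrix_multiplicities} with $\ell_0=k-m+1$ and $d_i\equiv 0$), and your verification that $t_j\in(k-m+j,k+j)$ holds automatically for $t_j\in(k,k+1)$ is correct. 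Both routes rest on the same ECC structure; yours invokes the heavier collocation-matrix theorem but in return forces all active coefficients $c_{k-m+1},\dots,c_k$ to vanish simultaneously for every $k$, with no need to single out a nonzero coefficient, while the paper's zero-count argument is lighter and shorter. Part (ii) is handled essentially as in the paper.
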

\begin{proof} 
(i) Without loss of generality, we can assume that  $\varphi$ is supported on $[0,m]$. 
 By Theorem \ref{thm:stable_shifts}, it  suffices to show the
$\ell^\infty$-independence of $(T_\ell \varphi)_{\ell\in\Z}$. To that
end, let  $c\in\ell^\infty(\Z), c \neq 0$, and assume that 
$c_{\ell_0}\neq 0 $. Since 
$\supp(T_\ell \varphi) = [\ell, \ell+m]$, the restriction of $f$ to the  interval $(\ell_0+m-1, \ell_0+m)$ is given by 
\begin{equation}
f = \sum\limits_{\ell=\ell_0}^{\ell_0+m-1}c_\ell T_\ell \varphi.
\end{equation}
The restriction of $f$ to $(\ell_0+m-1,\ell_0+m)$ clearly belongs to the ECC-system. By Lemma \ref{thm:Tzeros}, it has at most $m-1$ zeros. In particular, it does not vanish on this interval.

(ii) follows from  \textrm{(i)}, because the inequality  
\begin{equation} 
    A_p \norm{c}_{\ell^p}^p \leq \Big\lVert\,\sum_{\ell\in\Z}c_\ell T_\ell\varphi\,\Big\rVert_{L^p}^p \leq \sum\limits_{s=0}^{S} \Big\lVert\sum_{\ell\in\Z}c_\ell T_\ell\varphi^{(s)}\,\Big\rVert_{L^p}^p.
  \end{equation}
  implies that the generator $\Phi = (\varphi , \varphi ' , \dots
  \varphi ^S)$ 
  has stable integer shifts. The upper inequality in~\eqref{c1} are
  obvious. 
\end{proof}

\subsection{EB-splines} Our  initial motivation to consider PEB-splines was the  particular
subclass of  exponential B-splines. These  have already been used in time-frequency analysis
in \cite{BannertEtAl2014, KloosStoeckler2014}. Whereas general
PEB-splines are usually defined recursively via divided differences,  EB-splines also have a direct  definition by their  Fourier transform as follows from \cite{ChristensenMassopust2011}. Let $m\in\N$ and let $\alpha =\left( \alpha_1, \dots, \alpha_m \right)\in\R^m$. 
An EB-spline $\mathcal{E}_{m,\alpha}: \R\longrightarrow\R$ of order $m$ for the $m$-tuple $\alpha$ is a function of the form 
\begin{equation}
\mathcal{E}_{m,\alpha}(x):={\prod\limits_{s=1}^{m}}^{\mbox{\raisebox{-7pt}{\large $ *$}}}\, e^{\alpha_s x}\indicator_{[0,1)}(x),
\end{equation}
where $\prod^{\mbox{{\large $ *$}}}$ denotes the convolution product. Its Fourier transform is given by
\begin{equation}\label{c2}
\widehat{\mathcal{E}}_{m,\alpha}(\omega) = \prod\limits_{s=1}^m\frac{e^{\alpha_s - 2\pi i \omega}-1}{\alpha_s - 2\pi i \omega}.
\end{equation} 
If $\alpha_1 = \dots = \alpha_m = 0$, then this is a classical B-spline.
To establish the connection with the original definition of PEB-splines, we set $\gamma_1:=\alpha_1$ and $\gamma_s = \alpha_s-\alpha_{s-1}$ for $2\leq s\leq m$. Then the weight functions $w_s(x) = e^{\gamma_s x}$ induce the EB-spline $B_m = \mathcal{E}_{m,\alpha}$.
For an  EB-spline, the stability follows directly from the factorization~\eqref{c2} and Theorem~\ref{thm:stable_shifts}(iv). 

\section{A Sampling Theorem via Uniform Bounds on the Collocation Matrices }\label{sec:main1} 
In this section, we prove a first sampling theorem with derivatives in a \sis\ generated by a PEB spline. The proof technique is inspired by \cite{AldroubiGroechenig2000}, and its main point is to establish uniform bounds on a family of collocation matrices. The use of derivatives  requires more spline theory and leads to more technicalities.

Our  standing assumption is that the generator $\varphi$ is a periodic exponential B-spline of order $m$,  as described in Subsection \ref{subs:PEB-splines}. Without loss of generality, we  assume that $\varphi$ is supported on $[0,m]$. We always assume that $X$ is
separated, i.e.,  $\inf _{x,y\in X,x\neq y} |x-y| = \delta >0$, and $\delta $ is called the separation constant of $X$. 

As in \cite{AldroubiGroechenig2000}, we partition $\R $ into suitable intervals and analyze the sampling  problem locally. Given integers $M\in\Z$ and $L\in\N$, we partition $\R$ in intervals
\begin{equation}
\R = \bigcup\limits_{k\in\Z}[M+kL, M+kL+L) = \bigcup\limits_{k\in\Z}I_{M,L}(k).
\end{equation}

\begin{remark}
Let $m\in \N$ and let $\ell_0\leq x_1 < \dots <x_N \leq \ell_0 +L$ be sampling locations with multiplicities $\d_1,\dots,\d_N \in \{0,1\dots, m-1\}$ such that 
$$
    \sum\limits_{j=1}^{N}(1+\d_j) = L+m-1
$$
and $\d_N\leq m-2$ if $x_N= \ell_0+L$. Then for all $(c_\ell)_{\ell\in\Z}\in\ell^2(\Z)$ we have 
\begin{equation}\label{eq:rem_restriction}
    \Big(\sum\limits_{\ell\in\Z} c_\ell T_\ell \varphi\Big)^{(s_j)}(x_j) = \Big(\sum\limits_{\ell=\ell_0- m+1}^{\ell_0+ L-1} c_\ell T_\ell \varphi\Big)^{(s_j)}(x_j), \qquad 0\leq s_j\leq \d_j,\ 1\leq j\leq N.
\end{equation}
Since $\varphi$ vanishes outside of $[0,m]$, \eqref{eq:rem_restriction} is clear for all $x_1,\dots ,x_{N-1}$.  
The additional assumption that $\d_N\leq m-2$ if $x_N = \ell_0+L$ implies that the measurements at the last sampling point do not include a point of discontinuity of $T_{\ell_0+L}\varphi ^{(m-1)}$.
Thus for all $0\leq s\leq \d_{N}$ and all $\ell \geq \ell_0 +L$, $T_{\ell}\varphi^{(s)}(\ell _0 + L)=0$. These observations imply \eqref{eq:rem_restriction}. 
    
\end{remark}
To avoid unnecessary banalities, we first settle  the easy case of PEB-splines of order $1$.  
 
\begin{lemma}
Let $\varphi$ be a PEB-spline of order 1 supported on $[0,1)$. Assume that $X\subseteq \R$ is separated. Then the following are equivalent:
\begin{enumerate}[(i)]
\item For every $k\in\Z$, there exists a point $x^k\in X\cap [k,k+1)$.
\item $(X, \mu)$ is a sampling set for $V^p(\varphi)$ for all $p\in[1,\infty]$.
\item $(X, \mu)$ is a uniqueness set for $V^p(\varphi)$ for all $p\in[1,\infty]$.
\end{enumerate}
\end{lemma}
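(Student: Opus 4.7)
The plan is to prove the cycle $(i) \Rightarrow (ii) \Rightarrow (iii) \Rightarrow (i)$, exploiting the fact that for order $m=1$ the integer shifts of $\varphi$ have pairwise disjoint supports, so the problem decouples completely across the intervals $[k,k+1)$.

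For $(i) \Rightarrow (ii)$, I would write $f = \sum_\ell c_\ell T_\ell \varphi \in V^p(\varphi)$ and observe that, because $\mathrm{supp}(T_k\varphi) = [k,k+1)$, the restriction $f|_{[k,k+1)}$ equals $c_k T_k\varphi$. Since $\varphi$ is a PEB-spline of order $1$ associated to the strictly positive weight $w_1(x) = e^{\gamma_1 x}r_1(x)$, we have (up to a positive normalization) $\varphi(x) = w_1(x)\mathbbm{1}_{[0,1)}(x)$, so $T_k\varphi(x) = e^{-\gamma_1 k}w_1(x-k)$ on $[k,k+1)$. The quantity $w_1(y)$ for $y\in[0,1)$ is bounded above and below by strictly positive constants, hence there exist $0<m_0\le M_0$, independent of $k$, such that $m_0 e^{-\gamma_1 k}\le T_k\varphi(x)\le M_0 e^{-\gamma_1 k}$ for $x\in[k,k+1)$. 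In particular, at the sampling point $x^k\in X\cap[k,k+1)$ guaranteed by $(i)$,
\begin{equation}
|f(x^k)|^p = |c_k|^p\, T_k\varphi(x^k)^p \ge m_0^p e^{-p\gamma_1 k}|c_k|^p.
\end{equation}
Summing yields the lower sampling bound once I compare $\sum_k e^{-p\gamma_1 k}|c_k|^p$ to $\|f\|_p^p$; but because on each unit interval $\|f\|_{L^p[k,k+1)}^p = |c_k|^p\int_0^1 w_1(y)^p\,dy\cdot e^{-p\gamma_1 k}$, this comparison is in fact an equality up to a constant, which simultaneously gives the stability $\|f\|_p^p \asymp \sum_k e^{-p\gamma_1 k}|c_k|^p$ for free. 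For the upper sampling bound I use separation of $X$: the number of points of $X$ in any $[k,k+1)$ is at most $\lceil 1/\delta\rceil+1$, and each such $f(x)$ is bounded by $M_0 e^{-\gamma_1 k}|c_k|$, so $\sum_{x\in X}|f(x)|^p$ is controlled by a constant times $\sum_k e^{-p\gamma_1 k}|c_k|^p \asymp \|f\|_p^p$. The case $p=\infty$ is handled analogously with suprema.

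The implication $(ii) \Rightarrow (iii)$ is immediate: a sampling inequality of the form $\sum_{x\in X}|f(x)|^p \asymp \|f\|_p^p$ forces $\|f\|_p = 0$ whenever $f|_X \equiv 0$.

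For $(iii) \Rightarrow (i)$ I argue by contrapositive. Suppose $(i)$ fails, so there exists $k_0\in\Z$ with $X\cap[k_0,k_0+1) = \emptyset$. Take $c = e_{k_0}$ (the standard basis sequence) and set $f = T_{k_0}\varphi$. Then $f \in V^p(\varphi)$ for every $p\in[1,\infty]$ since $c\in\ell^p(\Z)$ for all $p$, $f \not\equiv 0$, yet $\mathrm{supp}\,f \subseteq [k_0,k_0+1)$ is disjoint from $X$, so $f|_X \equiv 0$; hence $X$ is not a uniqueness set for $V^p(\varphi)$ for any $p$, contradicting $(iii)$.

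No real obstacle arises here; the lemma is essentially the observation that disjoint supports make the whole sampling problem degenerate into independent one-dimensional problems on each unit interval. The only mild care required is to keep track of the exponential factors $e^{-\gamma_1 k}$ coming from $w_1$, but these cancel between the ``sampling side'' and the ``norm side'' of the inequality because the same factor appears in both $|f(x^k)|$ and $\|f\|_{L^p[k,k+1)}$.
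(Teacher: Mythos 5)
Your argument is correct and follows essentially the same route as the paper: for $m=1$ the shifts $T_k\varphi$ have pairwise disjoint supports $[k,k+1)$, the lower sampling bound comes from one sample per unit interval together with the strict positivity of $w_1$, the implication (ii)$\Rightarrow$(iii) is trivial, and (iii)$\Rightarrow$(i) uses the very same witness $f=T_{k_0}\varphi$. The only real difference is bookkeeping: the paper quotes its general upper bound (Theorem \ref{thm:upper_bound_W_jumps}) and the stability $\norm{f}_p\asymp\norm{c}_p$ (Lemma \ref{lemma:PEB_stable_shifts}), whereas you re-derive both by computing the $L^p$-norm of $f$ exactly on each unit interval, which is legitimate and, for $m=1$, completely self-contained. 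One computational slip should be fixed: $T_k\varphi(x)=\varphi(x-k)=w_1(x-k)$ for $x\in[k,k+1)$, which by periodicity of $r_1$ equals $e^{-\gamma_1 k}w_1(x)$ --- not $e^{-\gamma_1 k}w_1(x-k)$ --- so $T_k\varphi$ is bounded above and below on $[k,k+1)$ by constants $m_0,M_0$ carrying no factor $e^{-\gamma_1 k}$, and likewise $\norm{f}_{L^p[k,k+1)}^p=\abs{c_k}^p\int_0^1 w_1(y)^p\,dy$ without the factor $e^{-p\gamma_1 k}$. Because you inserted the same spurious factor on both the sample side and the norm side, it cancels and your final inequalities (hence the lemma) are unaffected; but the intermediate displays, including the asserted equivalence $\norm{f}_p^p\asymp\sum_k e^{-p\gamma_1 k}\abs{c_k}^p$, are false as written for $\gamma_1\neq 0$ --- the correct statement is $\norm{f}_p^p\asymp\sum_k\abs{c_k}^p$, i.e., the usual stability of the integer shifts.
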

\begin{proof}
The implication  $(ii) \Rightarrow (iii)$ is trivial. A PEB-spline of the first order is the cut-off of the continuous weight function, namely $\varphi = w_1\cdot\indicator_{[0,1)}$. Thus, if there exists a $k\in\Z$ with $X\cap [k,k+1)= \emptyset$, then $T_k\varphi|_{X} =0$, although this function is clearly not the zero function. This shows $(iii)\Rightarrow(i)$.

Now assume that $(i)$ holds.
The upper sampling bound is satisfied by Theorem  \ref{thm:upper_bound_W_jumps}. Since $w$ does not vanish, $\varphi$ has no zeros on $[0,1)$ and vanishes outside this interval. Therefore, $\varphi(x^k-\ell) \neq 0$ holds if and only if $k=\ell$.
Since $w$ is continuous and is strictly positive on $\R$, it attains a minimum $a>0$ on the compact interval $[0,1]$. For all $p\in[1,\infty)$ we obtain
\begin{align}
\sum\limits_{x\in X}\abs{f(x)}^p & \geq\sum\limits_{k\in\Z} \abs{f(x^k)}^p  = \sum\limits_{k\in\Z} \abs{c_{k}\varphi(x^k - k)}^p \geq a^p \sum\limits_{k\in\Z} \abs{c_{k}}^p ,
\end{align}
with an analogous estimate for $p=\infty$. This shows $(i)\Rightarrow(ii)$ and completes the proof.
\end{proof}
From now on, we assume without loss of
generality that $m\geq 2$. Our first theorem looks a bit technical, but is definitely useful. 

\begin{theorem}\label{thm:main_compact} 
Let $\varphi$ be a PEB-spline of order $m\in\N$ supported on $[0,m]$. Assume $X\subseteq \R$ with multiplicity function $\mu_X$ satisfies the following properties:
\begin{enumerate}[(i)]
\item $X$ is separated. 
\item $S:=\max\limits_{x\in X} \mu_X(x)\leq m-1$.
\item There exist integers $M, L>0$
and $\varepsilon >0$, such that for every $k\in\Z$, there exist points $x_1^k < x_2^k<\dots <x^k_{N(k)}$ in $X\cap I_{M,L}(k)$ and nonnegative integers $\d_1^k ,\, \d_2^k,\, \dots,\, \d^k_{N(k)}$ with the following three properties:
\begin{equation}\label{eq:thm_compact_cond3.1} 
\d^k_j \leq \mu_X(x^k_j)\qquad\text{ for all }k\in\Z,\, 1\leq j\leq N(k),
\end{equation}
\begin{equation}\label{eq:thm_compact_cond3.2}
\sum\limits_{j=1}^{N(k)} \left(1+\d_j^k\right) = L+m-1 \qquad\text{ for all }k\in\Z,
\end{equation}
\begin{equation}\label{eq:thm_compact_cond3.3}
x_j^k 
 \in X\cap \Big(\left[ \d^k_j-m+\varepsilon, -\varepsilon\right] +M+kL+1+\sum\limits_{n=1}^{j-1} \left(1+\d^k_n\right)\Big).
\end{equation}
\end{enumerate}
Then $(X, \mu_X)$ is a sampling set for $V^p(\varphi)$ for all $p\in[1,\infty]$.
\end{theorem}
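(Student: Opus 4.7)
The upper bound in~\eqref{eq:sampling_vectors_def} is immediate from Theorem~\ref{thm:upper_bound_W_jumps} applied to each of $\varphi,\varphi',\dots,\varphi^{(S)}$; all of these are bounded and piecewise continuous with jumps only at the integer knots, and each tuple $(\varphi,\dots,\varphi^{(S)})$ has stable integer shifts by Lemma~\ref{lemma:PEB_stable_shifts}. The entire difficulty resides in the lower bound.

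Following the strategy of~\cite{AldroubiGroechenig2000}, I will analyse the problem locally on each window $I_{M,L}(k)=[M+kL,M+kL+L)$. Fix $k\in\Z$ and $f=\sum_{\ell\in\Z}c_\ell T_\ell\varphi\in V^p(\varphi)$. Since $\supp\varphi\subseteq[0,m]$, only the $D\coloneqq L+m-1$ coefficients indexed by $R_k\coloneqq\{M+kL-m+1,\dots,M+kL+L-1\}$ contribute to the restriction of $f$ and its derivatives to $I_{M,L}(k)$, as recorded in the remark preceding the theorem. Assumption~\eqref{eq:thm_compact_cond3.2} furnishes exactly $D$ data values (counted with multiplicity) inside $X\cap I_{M,L}(k)$, and a direct unfolding shows that the inclusion~\eqref{eq:thm_compact_cond3.3} is nothing but the Schoenberg--Whitney condition of Corollary~\ref{cor:matrix_multiplicities} with $\ell_0=M+kL-m+1$, strengthened by a uniform inward margin $\varepsilon$. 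Consequently the local collocation matrix $\MM_k\coloneqq\MM(x^k,\d^k,M+kL-m+1)$ from~\eqref{eq:M_corollary_def} is invertible for every $k$, and $\MM_k c^{(k)}=\y^{(k)}$, where $c^{(k)}=(c_\ell)_{\ell\in R_k}$ and $\y^{(k)}$ lists the sample values $f^{(s)}(x_j^k)$, $0\le s\le\d_j^k$, $1\le j\le N(k)$.

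The key technical step is to bound $\|\MM_k^{-1}\|_{\ell^p\to\ell^p}$ \emph{uniformly} in $k$. Here I exploit the fact that $\varphi$ itself is independent of $k$: after the integer substitution $\tilde x_j^k\coloneqq x_j^k-M-kL\in[0,L]$, each entry of $\MM_k$ is simply $(T_{\ell-1}\varphi)^{(d_i)}(x_j^k)=\varphi^{(d_i)}(\tilde x_j^k+m-\ell)$ with $\ell\in\{1,\dots,D\}$, so $\MM_k$ depends on $k$ only through the parameter vector $(\tilde x_j^k)_j$ and the multiplicity pattern $\d^k$. The admissible patterns $(N,\d_1,\dots,\d_N)$ with $\d_j\in\{0,\dots,m-1\}$ and $\sum_{j}(1+\d_j)=D$ form a \emph{finite} set, and for each such pattern condition~\eqref{eq:thm_compact_cond3.3} confines the translated parameter vector to a compact subset of $[0,L]^N$ on which the Schoenberg--Whitney condition holds with slack $\varepsilon$. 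On such a compact set the determinant of the corresponding matrix is, by Corollary~\ref{cor:matrix_multiplicities}, a continuous and strictly positive function, hence bounded below; taking the maximum of $\|\cdot^{-1}\|$ over the finite collection of compact sets yields a constant $C=C(\varphi,L,M,\varepsilon)$ with $\|c^{(k)}\|_{\ell^p(R_k)}\le C\|\y^{(k)}\|_{\ell^p}$ for every $k\in\Z$ and every $p\in[1,\infty]$ (with the usual convention for $p=\infty$).

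To conclude, the windows $I_{M,L}(k)$ are pairwise disjoint, so the condition $\d_j^k\le\mu_X(x_j^k)$ yields $\sum_{k\in\Z}\|\y^{(k)}\|_p^p\le\sum_{x\in X}\sum_{s=0}^{\mu_X(x)}|f^{(s)}(x)|^p$. On the coefficient side, adjacent ranges $R_k$ overlap in at most $m-1$ indices, so each $c_\ell$ lies in a bounded (depending only on $L,m$) number of $R_k$'s, giving $\|c\|_p^p\le\sum_{k\in\Z}\|c^{(k)}\|_p^p\le C^p\sum_{k\in\Z}\|\y^{(k)}\|_p^p$. Combined with the norm equivalence $\|f\|_p\asymp\|c\|_p$ from Lemma~\ref{lemma:PEB_stable_shifts}, this produces the required lower sampling bound. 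I expect the main obstacle to be the uniform estimate on $\|\MM_k^{-1}\|$: one has to recognise that the integer translation by $M+kL$ does not move $\varphi$ at all but merely relabels the indices, reducing the family $\{\MM_k\}_{k\in\Z}$ to a continuous matrix-valued function on a compact parameter domain, where the Schoenberg--Whitney margin $\varepsilon$ finally gives uniform non-degeneracy.
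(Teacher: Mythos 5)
Your proposal is correct and follows essentially the same route as the paper's proof: partition $\R$ into the windows $I_{M,L}(k)$, identify condition \eqref{eq:thm_compact_cond3.3} with the Schoenberg--Whitney conditions of Corollary~\ref{cor:matrix_multiplicities}, obtain uniform bounds on the inverse collocation matrices by combining the finiteness of the multiplicity patterns with continuity on the compact parameter sets carved out by the margin $\varepsilon$, and glue the local estimates using the stability of the shifts (Lemma~\ref{lemma:PEB_stable_shifts}). The only difference is cosmetic: you keep $\ell_0=M+kL-m+1$ and relabel indices, whereas the paper translates the points to $[0,L]$ and works with $\MM(z,\d,-m+1)$, which is the same reduction.
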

\begin{remark}
The conditions \eqref{eq:thm_compact_cond3.1} - \eqref{eq:thm_compact_cond3.3}
are compatible with \eqref{eq:rem_restriction} and in some sense are  necessary. Since the restriction of $f$ to an interval $I_{M,L}(k)$ belongs to an $(L+m-1)$-dimensional space, we need at least $L+m-1$ samples, whence \eqref{eq:thm_compact_cond3.2}. If more information is available, we either remove points or the highest derivatives, which is \textit{(ii)} and \eqref{eq:thm_compact_cond3.2}. As we intend  to use the invertibility of the collocation matrix (Corollary \ref{cor:matrix_multiplicities}), the retained information must be chosen diligently and involves a Schoenberg-Whitney condition of type \eqref{eq:thm_compact_cond3.3}, which says that the support of each shift must contain a sampling point (see Figure \ref{fig:necessary_points}). 

\begin{figure}[h!tp]
		\includegraphics[width=\textwidth]{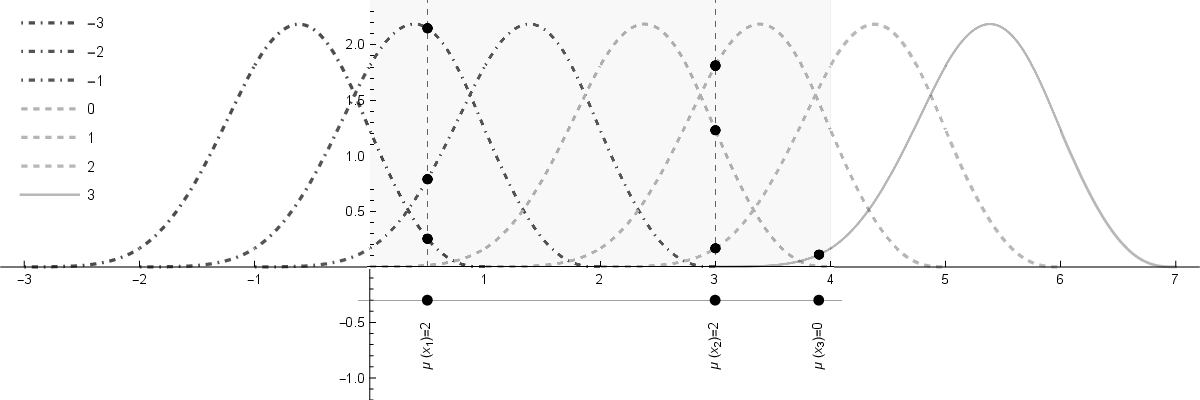}
	\caption{Nonvanishing shifts of $\varphi(x) = {\prod_{j=1}^{\mbox{\large $*$}\ 4\ }}\, e^{x}\indicator_{[0,1)}(x)$. The sampling points are $x_1 = 0.5$, $x_2=3$, $x_3=3.9$, with multiplicities $\mu_{X}(x_1)=\mu_{X}(x_2)=2$, $\mu_{X}(x_3)=0$. The first sampling point lies in the support of the first three shifts of $\varphi$ (dot-dashed), the second point is in the support of the next three shifts of $\varphi$ (dashed), and the last point - in the support of the last shift of $\varphi$ (solid).}
	\label{fig:necessary_points}
\end{figure}
\end{remark}
\begin{proof}
Fix $p\in[1,\infty)$ and let
\begin{equation}
f = \sum_{\ell\in\Z} c_\ell T_\ell \varphi\in V^p(\varphi).
\end{equation} 
Since $\norm{f}_p\asymp \norm{c}_p$ for $f\in V^p(\varphi)$ by Lemma \ref{lemma:PEB_stable_shifts}, it suffices to show the inequality 
\begin{equation} 
\sum\limits_{x\in X}\sum\limits_{s=0}^{\mu_X(x)} \abs{f^{(s)}(x)}^p\asymp \norm{c}_{\ell^p}^p.
\end{equation}
For given $M,\,L\in \N$ every interval $I_{M, L}(k)=[M+kL,M+(k+1)L)$ will be investigated separately. The aim is to use Corollary \ref{cor:matrix_multiplicities}. 

\textbf{Step 1: The upper bound.} 
Since $\varphi^{(s)}$, $0\leq s\leq m-1$, are piecewise continuous, compactly supported functions with finitely many jump discontinuities, Theorem \ref{thm:upper_bound_W_jumps} (ii) applies and yields the upper bound for the sampling inequality. 

\textbf{Step 2: A class of collocation matrices with uniform bounds.} 
Let $\mathcal{M}$ be the set
\begin{equation}\label{eq:compact_posible_mu}
\mathcal{M}\coloneqq \Big\lbrace \d\in\lbrace 0,\dots , S\rbrace^N : N\in\N,\ \sum\limits_{j=1}^N \left( 1+\d_j\right) = L+m-1\Big\rbrace. 
\end{equation}
If $\d\in\mathcal{M} \cap \Z^N$, then $N\leq L+m-1$. Therefore, 
\begin{equation}
\mathcal{M}\subseteq \bigcup\limits_{N=1}^{L+m-1}\left\lbrace 0,\dots,
  S \right\rbrace^{N} \, ,
\end{equation}
in particular, the set $\mathcal{M}$ is finite. Let $\Omega_\d\subseteq \R^{N}$ for $\d\in \mathcal{M} \cap\Z^N$ be the set of vectors $z = ( z_1, z_2,\dots,z_N)^t\in\R^{N}$ with 
\begin{equation}
\min\limits_{i\neq j} \abs{z_i-z_j}\geq \delta
\end{equation}
and 
\begin{equation}\label{eq:thm_compact_z}
    \begin{split}
z_j&\in [0, L]\cap \
\Big(\left[\d_j-m+\varepsilon, -\varepsilon\right]\ +1+\sum\limits_{n=1}^{j-1} \left(1+\d_n\right)\Big) \\
&= \Big[ \max\Big\lbrace 0, \d_j-m+1+\varepsilon+ \sum\limits_{n=1}^{j-1} \left(1+\d_n\right)\Big\rbrace\, ,\, \min\Big\lbrace L, 1-\varepsilon +\sum\limits_{n=1}^{j-1} \left(1+\d_n\right)\Big\rbrace\Big].
    \end{split}
\end{equation}
In other words, $\Omega_\d$ contains all point configurations in $[0,L]$ that satisfy  Condition \eqref{eq:thm_compact_cond3.3}. The set is bounded and closed, hence compact in $\R^{N}$. For each $z\in\Omega_\d$ we consider the collocation matrix
\begin{equation}
\MM(z, \d, -m+1)\in\R^{(L+m-1)\times (L+m-1)}
\end{equation} 
as defined in the proof of Corollary \ref{cor:matrix_multiplicities}
in \eqref{eq:M_corollary_def} with parameters $D = L+m-1$ and $\ell_0 = -m+1$. 
By Corollary \ref{cor:matrix_multiplicities} and the positions of the points (cf. \eqref{eq:thm_compact_z}), the matrix is invertible. In addition, $z\mapsto \MM(z, \d, -m+1)$ is continuous on the compact set $\Omega_\d$, so the matrices are bounded from above and below uniformly in $\omega\in\Omega_\d$. 
To see this, a trivial upper bound is a multiple of the maximum of the entries of $\MM(z, \d, -m+1)$. For the lower bound, we note that $z\mapsto \MM(z, \d, -m+1)^{-1}$ is continuous by Cramer's rule, thus we can again obtain a uniform upper bound $0<a_p(\d)^{-1}$ for $\MM(z,
\d, -m+1)^{-1}$. Then $a_p(\d)$ is a strictly positive uniform lower bound of $\MM(z, \d, -m+1)$, i.e., for all $c\in\R^{L+m-1}$ 
\begin{equation}
\norm{\MM(z, \d, -m+1)\cdot c}_p \geq a_p(\d)\norm{c}_p. 
\end{equation}
Since $\mathcal{M}$ is finite, we can set 
\begin{equation}
A_p\coloneqq \min\lbrace a_p(\d) : \d\in\mathcal{M}\rbrace>0.
\end{equation}
\textbf{Step 3: The lower estimate.}
Let $x^k = (x_1^k, \dots, x_{N(k)}^k)$ be the points in $X \cap I_{M,L}(k)$ that satisfy Condition \textit{(iii)}, i.e., they  satisfy \eqref{eq:thm_compact_cond3.1}, \eqref{eq:thm_compact_cond3.2}, and \eqref{eq:thm_compact_cond3.3}. We set $\d^k = \left(\d_1^k,\dots, \d^k_{N(k)}\right)\in\mathcal{M}$ and $z_j^k = x_j^k-(M+kL)$. Then $z^k\in \Omega_{\d^k}$. 
Let $\textbf{F}(x^k)$  be the complete list of evaluations 
\begin{align}\label{eq:thm_compact:list_of_Eval}
    \textbf{F}(x^k) &:= \left( f(x^k_1),\dots, f^{(\d^k_1)}(x^k_1),
    \dots,  f(x_{N(k)}^k),\dots, f^{(\d^k_{N(k)})}(x_{N(k)}^k)\right) \in \C^{L+m-1}.
\end{align}
For $p\in[1,\infty)$, we estimate from below the sum
\begin{equation}
    \sum\limits_{x\in X}\sum\limits_{s=0}^{\mu_X(x)}\abs{f^{(s)}(x)}^p \geq \sum\limits_{k\in\Z}\sum\limits_{j=1}^{N(k)}\sum\limits_{s=0}^{\d_j^k}\abs{f^{(s)}(x_j^k)}^p = \sum\limits_{k\in\Z} \norm{\textbf{F}(x^k)}_{p}^p.
\end{equation}
We now consider the restriction of $f$ to the interval $I_{M,L}(k)$ and shift it  to $[0,L)$. The resulting function is  
\begin{equation}
    T_{-(M+kL)} \big( f \big| _{I_{M,L}(k)} \big) =  T_{-(M+kL)}\,\sum\limits_{\ell = M+kL-m+1}^{M+kL+L-1} c_\ell T_\ell \varphi = \sum\limits_{\ell =-m+1}^{L-1} c_{\ell+M+kL} T_{\ell} \varphi.
\end{equation}
We denote by $\mathfrak{c}_k = \left(c_{M+kL-m+1},\dots, c_{M+kL+L-1}\right)^t$ the extracted coefficients which are active on $I_{M,L}(k)$. 
By Corollary \ref{cor:matrix_multiplicities}, specifically \eqref{eq:M_corollary_def}, the measurements $\textbf{F}(x^k)$ are given by
\begin{align}
\textbf{F}(x^k) = 
\MM(z^k,\d^k,-m+1)\cdot \mathfrak{c}_k.
\end{align}
From this follows
\begin{equation}
\begin{split}
\sum\limits_{x\in X}\sum\limits_{s=0}^{\mu_X(x)}\abs{f^{(s)}(x)}^p & \geq  \sum\limits_{k\in\Z} \norm{
\MM(z^k,\d^k,-m+1)\cdot \mathfrak{c}_k }_p^p 
\geq A_p^p \, \sum\limits_{k\in\Z} \norm{\mathfrak{c}_k}_p^p \geq
A_p^p \,  \norm{c}_p^p\, .
\end{split}
\end{equation}
In the last inequality, we have used the fact that every $\ell \in \Z $ is
contained in some interval $   M+kL-m+1 \leq \ell\leq M+kL+L-1.$ 
The estimate for $p=\infty$ is analogous, and we are done. 
\end{proof}

\begin{remark}
The assumptions in Theorem \ref{thm:main_compact} are not easy to
visualize. We will therefore  investigate a different set of conditions in Section \ref{sec:main2}.
But Theorem~\ref{thm:main_compact} is a powerful result, and the
imposed conditions are necessary and optimal in several regards.

(i) The restriction of $f\in V^p(\varphi )$ to the   interval $I_{M,
  L}(k)$ is given by~\eqref{eq:rem_restriction} and is a linear combination
of $L+m-1$ linearly independent shifts $T_{\ell}\varphi $, this is
Condition \eqref{eq:thm_compact_cond3.2}).

(ii) To recover the restriction of $f$ on the interval $I_{M,L}(k)$ from $L+m-1$
samples, the associated collocation matrix must be invertible. This is implied by
Condition~\eqref{eq:thm_compact_cond3.3}. 
The margin $\varepsilon >0$ leads to uniform estimates in $k$. 

(iii)  Condition \eqref{eq:thm_compact_cond3.2} implies that $(X,\mu_X)$  contains a subset $X'\subseteq X$ with multiplicity function $\mu _{X'}$  satisfying  $\mu _X'(x) \leq \mu _X(x),$ $x\in X'$, such that $D^-(X',\mu _{X'} )= 1+\frac{m-1}{L}$, cf. \cite[Prop.~3.7.]{GroechenigEtAl2019}. Thus every sampling set subject to the assumptions of Theorem~\ref{thm:main_compact} contains a sampling set with a weighted density close to the  necessary density condition.
\end{remark}

To illustrate the strength of Theorem~\ref{thm:main_compact}, we draw two consequences. First, we check standard sampling sets without multiplicities. It will be convenient to label $X$ as a bi-infinite strictly increasing sequence $X=(x_j)_{j\in \Z}$, $ x_j < x_{j+1}$. In
this notation the set $X$ is $\delta $-separated if $\inf _{j\in \Z} (x_{j+1}-x_j)\geq \delta >0$, and its maximum gap is defined as
\begin{equation}\label{eq:max_gap_OV}
\mg(X)\coloneqq\sup_{j\in \Z}\, (x_{j+1}-x_j) \, .
\end{equation}

The following result extends \cite[Thm.~4]{AldroubiGroechenig2000}
 from B-splines to general  PEB-splines.
\begin{corollary}\label{cor:max_gap_OV} 
Let $\varphi$ be a PEB-spline of order $m\in\N$.
If the sampling set $X\subseteq\R$ is separated and the maximum gap  satisfies $\mg(X)<1$, then $X$ is a sampling set for $V^p(\varphi)$.
\end{corollary}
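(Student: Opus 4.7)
The plan is to deduce the corollary from Theorem \ref{thm:main_compact} applied with the trivial multiplicity $\mu_X \equiv 0$. Conditions (i) and (ii) of that theorem are immediate: $X$ is separated by hypothesis, and $S = 0 \leq m - 1$. The substance of the argument lies in verifying condition (iii), which demands, for suitable integers $M$, $L$ and some $\varepsilon > 0$, a selection of $L + m - 1$ ordered points of $X$ in each window $I_{M,L}(k) = [M+kL, M+kL+L)$, with the $j$-th point satisfying the Schoenberg--Whitney constraint \eqref{eq:thm_compact_cond3.3}.

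Writing $\eta := \mg(X) < 1$, I would set $M := 0$, fix $\varepsilon \in \bigl(0,\, (1-\eta)/2\bigr)$, and then choose $L \in \N$ large enough that the auxiliary slope
\[
\eta_0 \ := \ \frac{L - \eta - \varepsilon}{L + m - 2}
\]
satisfies $\eta_0 > \eta$; this is always possible, since $\eta_0 \to 1$ as $L \to \infty$. For each $k \in \Z$ and each $j = 1, \dots, L + m - 1$ I would then consider the auxiliary interval
\[
I_j^k \ := \ \bigl[\,kL + (j-1)\eta_0,\ kL + (j-1)\eta_0 + \eta\,\bigr].
\]
By the choice of $\eta_0$, the intervals $I_j^k$ are pairwise disjoint (the gap between consecutive ones is $\eta_0 - \eta > 0$), all sit inside $[kL, kL+L)$ (the last one ends at $kL + L - \varepsilon$), and each has length $\eta$, so contains a point of $X$ by the maximum-gap hypothesis. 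Any choice of $x_j^k \in X \cap I_j^k$ together with $\d_j^k := 0$ produces a strictly increasing family of $L + m - 1$ samples in $I_{M,L}(k)$, obviously satisfying \eqref{eq:thm_compact_cond3.1} and \eqref{eq:thm_compact_cond3.2}.

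It remains to verify \eqref{eq:thm_compact_cond3.3}. Using $x_j^k \in I_j^k$, the upper bound $x_j^k \leq kL + j - \varepsilon$ reduces to $(j-1)(1-\eta_0) \geq \eta + \varepsilon - 1$, which is trivial since $\eta + \varepsilon < 1$. The lower bound $x_j^k \geq kL + j - m + \varepsilon$, which is binding only for $j \geq m$, reduces to $(j-1)(1-\eta_0) \leq m - 1 - \varepsilon$; its worst case $j = L + m - 1$ yields $(L+m-2)(1-\eta_0) = m - 2 + \eta + \varepsilon$, and the required inequality collapses to $\eta + 2\varepsilon \leq 1$, precisely the constraint built into the choice of $\varepsilon$.

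The main obstacle will be the calibration of $L$ against $\eta$: as $\eta \to 1^-$ one is forced to take $L$ proportionally large (on the order of $(m-1)\eta/(1-\eta)$) in order to make $\eta_0$ exceed $\eta$. This reflects the underlying tension that an interval of length $L$ carries only about $L/\eta$ points of $X$, which must still accommodate the $(L + m - 1)$-dimensional local model generated by the shifts of $\varphi$ active on $I_{M,L}(k)$. Once the parameters are tuned, the existence of the required sampling sequence is essentially forced by the gap assumption, and Theorem \ref{thm:main_compact} then delivers the sampling inequality for all $p \in [1, \infty]$.
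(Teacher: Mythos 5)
Your argument is correct and follows the same overall route as the paper: reduce the corollary to Theorem \ref{thm:main_compact} with $\mu_X\equiv 0$ and verify the Schoenberg--Whitney-type condition \eqref{eq:thm_compact_cond3.3}. The difference is that the paper does not carry out this verification itself --- it observes that for $\mu_X\equiv 0$ the hypotheses of Theorem \ref{thm:main_compact} coincide with those of \cite[Thm.~3]{AldroubiGroechenig2000} and delegates the gap-condition check to the proof of \cite[Thm.~4]{AldroubiGroechenig2000}, and it later gives an independent second proof via weak limits (Corollary \ref{allequal}, i.e.\ Theorem \ref{thm:max_gap} with $s=0$). Your explicit construction, placing the $j$-th sample in a window of length $\eta=\mg(X)$ whose left endpoint advances at rate $\eta_0=(L-\eta-\varepsilon)/(L+m-2)>\eta$, is a clean self-contained substitute for that citation, and the arithmetic checks out: the windows are disjoint, stay inside $[kL,kL+L)$, each contains a point of $X$ because every closed interval of length $\mg(X)$ must meet $X$, and the two Schoenberg--Whitney inequalities reduce, as you show, to $\eta+\varepsilon<1$ and $\eta+2\varepsilon\le 1$, both guaranteed by $\varepsilon<(1-\eta)/2$. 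One small caveat: your monotonicity and sign arguments use $\eta_0\le 1$, which holds only for $m\ge 2$; for $m=1$ the lower bound in \eqref{eq:thm_compact_cond3.3} can fail for $j=1$ (the point in $[kL,kL+\eta]$ may lie closer than $\varepsilon$ to $kL$). This is harmless, since the paper's standing assumption in that section is $m\ge 2$ and the order-one case is settled directly by the elementary lemma for PEB-splines of order $1$ (a gap $<1$ puts a point of $X$ in every $[k,k+1)$), but you should either restrict to $m\ge 2$ or invoke that lemma explicitly.
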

If $\mu_X\equiv 0$, then the assumptions in Theorem
\ref{thm:main_compact} are identical with those in
\cite[Thm.~3]{AldroubiGroechenig2000}. Further, the proof of \cite[Thm.~4]{AldroubiGroechenig2000} shows that a set with $\mg (X) <1$  satisfies the conditions in \cite[Thm.~4]{AldroubiGroechenig2000}, hence, the conditions in Theorem \ref{thm:main_compact}. We will give an alternative proof in Section~\ref{sec:main2}. 

To construct sampling sets with multiplicities, one can start with a sampling set without derivatives and remove certain sampling points while adding multiplicities at neighbouring sampling points (see Figure \ref{fig:preserving_rearrangement}).

Finally, we show that there exist sets with a given
multiplicity function that satisfies the sufficient conditions of Theorem~\ref{thm:main_compact} with a density arbitrarily close to the necessary condition.
\begin{figure}[h!tp]
	\subfigure[Starting arrangement.]{
		\includegraphics[width=.47\textwidth]{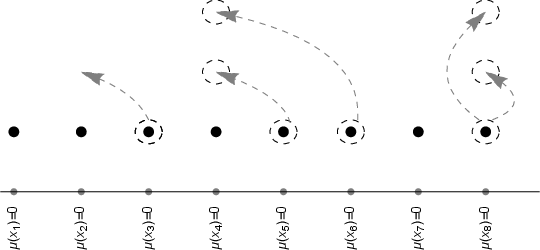}
	}
	\hfill
	\subfigure[Modification with multiplicities.]{
		\includegraphics[width=.47\textwidth]{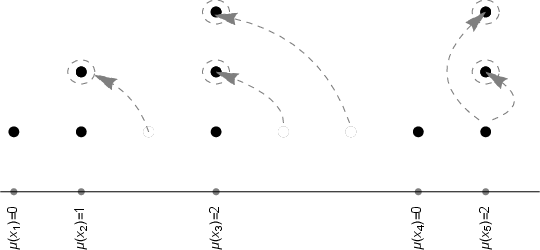}
	}
	\caption{Local modification of a sampling set that preserves the conditions of Thm. \ref{thm:main_compact}.}
	\label{fig:preserving_rearrangement}
\end{figure} 

 \begin{theorem}\label{thm:existence}
   For every $\nu >0$  and given multiplicity sequence  $\mu =
   (\mu _j)_{j\in \Z}$ with values in $\{0,\dots m-1\}$, there
   exists a  sequence  $X= (x_j)_{j\in \Z}$, such that 
   $(X,\mu_X)$ is sampling for $V^p(\varphi)$, $\mu_X(x_j)=\mu_j$ for all $j\in\Z$, and $D^-(X,\mu_X)<
   1+\nu $.
 \end{theorem}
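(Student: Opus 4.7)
The plan is to invoke Theorem~\ref{thm:main_compact} with suitable parameters. Fix $L\in\N$ with $L>2(m-1)/\nu$, set $M=0$, and aim to place one block of the prescribed sequence $(\mu_j)_{j\in\Z}$ inside each interval $I_k=[kL,(k+1)L)$ so that condition (iii) of Theorem~\ref{thm:main_compact} is realised with a uniform margin $\varepsilon>0$.

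\textbf{Blocking and reduction.} Partition $\Z$ into consecutive blocks $(B_k)_{k\in\Z}$ greedily: $B_k$ consists of the smallest set of consecutive indices with $\sigma_k:=\sum_{j\in B_k}(1+\mu_j)\ge L+m-1$. Since every weight $1+\mu_j$ lies in $\{1,\dots,m\}$, the stopping criterion yields $\sigma_k\in[L+m-1,\,L+2m-2]$ and $|B_k|\le L+m-1$ (each of the first $|B_k|-1$ additions contributes at least $1$ while keeping the partial sum $\le L+m-2$). Inside each block, decrease the multiplicities of the last one or two samples to obtain $\mu_j^k\le\mu_j$ with $\sum_{j\in B_k}(1+\mu_j^k)=L+m-1$; the required reduction $e_k=\sigma_k-(L+m-1)\le m-1$ is available since $\sum_{j\in B_k}\mu_j=\sigma_k-|B_k|\ge e_k$.

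\textbf{Placement.} Set $M_k=kL$ and $T_j^k=\sum_{n\in B_k,\,n<j}(1+\mu_n^k)$, and write $U_j^\varepsilon$ for the intersection
\[
U_j^\varepsilon \;=\; \bigl[M_k+T_j^k+\mu_j^k-m+1+\varepsilon,\; M_k+T_j^k+1-\varepsilon\bigr]\cap I_k,
\]
which is the Schoenberg--Whitney-admissible slot prescribed by \eqref{eq:thm_compact_cond3.3}. Each $U_j^\varepsilon$ is nonempty of length $\ge 1-2\varepsilon$, using $\mu_j^k\le m-1$. Place the samples $x_j,\ j\in B_k$, inductively in strictly increasing order by the leftmost-feasible rule $x_j=\max(\inf U_j^\varepsilon,\ x_{j-1}+\delta)$ for a small fixed separation $\delta>0$. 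Once the infima become positive, they satisfy $\inf U_j^\varepsilon-\inf U_{j-1}^\varepsilon=1+\mu_j^k\ge 1$, so the induction never overshoots $\sup U_j^\varepsilon$; in the initial segment (at most $m$ indices) where the infima collapse to $\varepsilon$, a few $\delta$-increments are absorbed provided $m\delta<1-2\varepsilon$. Finally, assign $\mu_X(x_j)=\mu_j$ to realise the prescribed multiplicity sequence.

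\textbf{Verification and density.} By construction $X$ is $\delta$-separated, $\max\mu_X\le m-1$, and condition (iii) of Theorem~\ref{thm:main_compact} holds with $M=0$, the chosen $L,\varepsilon$, and the reduced multiplicities $\mu_j^k$. Hence $(X,\mu_X)$ is sampling for $V^p(\varphi)$. Every window $[y-R,y+R]$ meets at most $\lceil 2R/L\rceil+1$ blocks, each of total weight $\le L+2m-2$, so
\[
D^-(X,\mu_X)\;\le\;\frac{L+2m-2}{L}\;=\;1+\frac{2(m-1)}{L}\;<\;1+\nu.
\]
The main obstacle is the placement step: one must guarantee that the sub-intervals $U_j^\varepsilon$ admit a strictly increasing selection of $|B_k|$ samples inside $I_k$, even in configurations where several consecutive $\mu_j$ are small and the trailing sub-intervals all cap at the right endpoint $M_k+L-\varepsilon$. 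The reduction of multiplicities at the end of each block is precisely what makes the slot $(M_k+L-1,\,M_k+L)$ available for the last sample and clears room for the inductive placement of its predecessors.
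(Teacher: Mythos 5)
Your proposal is correct and follows essentially the same route as the paper: reduce to Theorem~\ref{thm:main_compact}, block the prescribed multiplicity sequence greedily so each block carries weight between $L+m-1$ and $L+2m-2$, lower the multiplicity at the block end to hit exactly $L+m-1$ (note that, by minimality of the block, the last sample alone always absorbs the excess $e_k$, so ``one or two'' can be replaced by ``one''), verify the Schoenberg--Whitney slots with a uniform margin $\varepsilon$, and conclude with the same density estimate $1+\tfrac{2m-2}{L}<1+\nu$. The only deviation is cosmetic: the paper realizes the greedy leftmost placement by choosing points from the fixed lattice $\tfrac{1}{m+1}\Z$ (which gives separation and feasibility by a counting argument), whereas you use a leftmost-feasible rule with an explicit separation parameter $\delta$ and track the unit increments of the slot infima.
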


 \begin{proof}
   The sampling set $X$ will be a suitable subset of a sufficiently dense lattice $\alpha
   \Z $. In line with Theorem~\ref{thm:main_compact} we use a suitable
   partition and argue locally. 
   Since $X$ will be a subset of $\alpha \Z$, it is separated, and
   Condition (ii) in Theorem \ref{thm:main_compact} is satisfied by
   assumption. The main part is to construct  points $x$ and submultiplicities $\tilde \d$ to satisfy the third condition.

\textbf{Step 1. Global parameters.}
Given $\nu >0$, choose $L \in \N $ such that $\frac{2m-2}{L}<\nu$. We set $\varepsilon = \alpha = \frac{1}{(m+1)}$. We will work with the interval partitioning $ I_{L,0}(k) = [kL,(k+1)L)$. The key observation here is the fact that $\#( [\ell+\varepsilon, \ell+1-\varepsilon] \cap \alpha \Z) =m$, and for a fixed $\ell\in\Z$ 
\begin{equation}\label{eq:ce1}
\begin{split}
    [\ell+\varepsilon, \ell+1-\varepsilon]
    \cap \Big(\left[\d^k_j-m+\varepsilon, -\varepsilon\right] +kL+1+\sum\limits_{n=1}^{j-1} \left(1+\d^k_n\right)\Big) 
\end{split}
\end{equation}
is non-empty for at most $m$ consecutive indices $j, j+1,\dots, j+m-1$. Thus, in 
$[\ell+\varepsilon, \ell+1-\varepsilon]$ we will need at most $m$ sampling points in $\alpha \Z$, which are available by choice of $\alpha$.

\textbf{Step 2. Local construction.} Let $N\in \N$ be the minimal natural number with 
$
\sum_{j=1}^N (1+\mu_j) \geq L+m-1.
$
We set  
\vskip-13pt
$$ 
    \tilde \d_j \coloneqq \mu_j, \quad 1\leq j\leq N-1, \qquad \tilde \d_N \coloneqq (L+m-1) -
    \sum\limits_{j=1}^N (1+\mu_j) -1.
$$
   
The minimality of $N$ implies that $0\leq \tilde \d_N\leq \mu_N$, and the definition of $\tilde \d_N$ implies that 
\begin{equation*}
    \sum\limits_{j=1}^{N} (1+\tilde\d_j) = L+m-1.
\end{equation*}
   
\begin{figure}[h!tp] 
    \includegraphics[width=\textwidth]{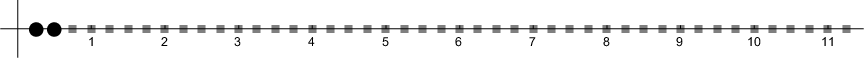}
    \includegraphics[width=\textwidth]{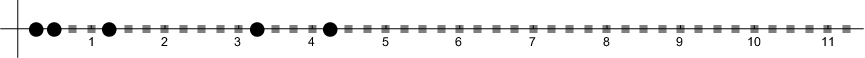}
    \includegraphics[width=\textwidth]{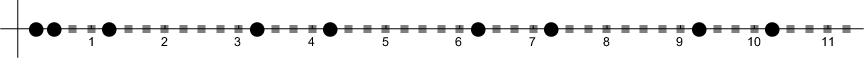}
	\caption{Choice of points (black circles) for $m=3$, $\mu = (1+(-1)^j)_{j\in\Z}$ and $\nu=\frac{1}{2}$ from the lattice $\frac{1}{4}\Z$ (gray squares). The interval length is $L=13$, and the number of points is $N=9$. The sampling locations are $\alpha k,\, k\in\{1, 2,5,13,17,25,29,37,41\}$. The plots represent the choice of the first $j$ points, $j\in\{2,5,9\}$.}
	\label{fig:existence_choice}
\end{figure}  

We now choose points $x_j\in [0,L)$  inductively (see Figure \ref{fig:existence_choice}) and start with $x_1 = \alpha = \varepsilon$, which clearly satisfies \eqref{eq:thm_compact_cond3.3}. Assume now we have picked $x_1,\dots, x_{j-1}$. We choose $x_j$ to be \emph{the minimal element in $\alpha \Z$ that satisfies the Schoenberg-Whitney conditions \eqref{eq:thm_compact_cond3.3}}. Precisely, let
$$
    \ell = \max \Big \lbrace 0, \tilde \d^k_j -m+1+\sum\limits_{n=1}^{j-1} \left(1+\tilde \d^k_n\right)\Big\rbrace \, ,
$$
and define
\begin{equation} \label{eq:in12}
    x_{j} = \min \{ \alpha k: k\in \Z, \alpha k \in
    [\ell+\varepsilon, \ell+1-\varepsilon]\} \, . 
\end{equation}
As observed in  Step 1, we can always choose such $x_j$. The condition $x_j\in [\ell + \varepsilon , \ell +1-\varepsilon ]$ is clearly stronger than the Schoenberg-Whitney conditions \eqref{eq:thm_compact_cond3.3}.
The chosen points are in $[0,L)$ because 
$$
0\leq \ell\leq \tilde \d_N -m+1+\sum\limits_{n=1}^{N-1}
\left(1+\tilde \d _n\right) = L-1<L.
$$
 
\textbf{Step 3. Partitioning $\mu $ and $\R $.} We now partition the sequence $\mu $ into blocks $M_k := \{ \mu _{N_{k-1} + 1 } , \dots, \mu _{N_{k}}\}$. We set   $N_{-1}= 0$ and construct $N_k$ inductively. Given $N_{k-1}$, we choose  $N_k\in\Z$  minimal with 
$$
\sum_{n=N_{k-1} +1}^{N_k}  (1 + \mu _j) \geq L+m-1.
$$
We carry out the local step for each block and obtain points $x_{N_{k-1}+j}\in [kL, (k+1)L)$ with submultiplicities $\tilde \d_{N_{k-1}+j}$ satisfy all conditions of Theorem \ref{thm:main_compact}. 
Consequently, the set $\{ x_m : m\in \Z \} = \bigcup _{k\in \Z} \{x_{N_{k-1} + j}: j=1 , \dots , N_{k}\} $ with multiplicities $(\mu_{m})_{m\in\Z}$ form a sampling set in $V^p(\varphi )$. 
\footnote{Note that only at the interval ends
$N_k$ we had to reduce the multiplicities to apply Theorem~\ref{thm:main_compact}.}

\textbf{Step 4. Density.} Since we have 
$$
\sum _{n=N_{k-1}+1} ^{N_{k}}
(1+\mu _{n}) = L+m-1 +(\mu_{N_k} - \tilde \mu_{N_k}) \leq L+2m-2
$$
samples in every interval $[kL,(k+1)L)$,
the weighted Beurling density is bounded by $D^-(X,\mu ) \leq
\frac{L+2m-2}{L} = 1+ \frac{2m-2}{L} < 1+\nu $.
\end{proof}

\begin{remark} 
Note that a controlled perturbation of $X$ by setting $\tilde{x_j} = x_j + \gamma _j , j\in \Z$ for $|\gamma_j|\leq \tau \varepsilon$ for a fixed $\tau\in(0,1)$ still satisfies the conditions of Theorem~\ref{thm:main_compact}. Additionally, there exist sampling sets with multiplicities $(X,\mu_X)$ which are sampling for $V^p(\varphi)$, $D^-(X,\mu_X)<1+\nu$, but $D^-(X)<1$, in particular, $X$ is not sampling for $V^p(\varphi)$. A simple example is a sequence that alternates between $s_1$ and $s_2$, $s_1+s_2>0$. If $L\in\N$  satisfies $ L+m-1 = T(s_1+s_2+2)$ for sufficiently large  $T\in\N$, then $\tilde\d_j = \mu_j$ for all $j\in\Z$, there are $2T$ sampling locations in each interval $[kL,(k+1)L)$ and the resulting set $X$ is $L$-periodic. Its lower Beurling density is given by 
\begin{align}
    D^-(X) & = \lim \limits_{K\to\infty} \frac{\# (X\cap [-kL, kL])}{2kL} = \frac{2T}{L} = \frac{2T}{T(s_1+s_2+2)-(m-1)} \\
    &= \Big(\frac{s_1+s_2}{2}+1 -\frac{m-1}{2T}\Big)^{-1} <1,
\end{align}
whenever $T > \frac{m-1}{s_1+s_2}$. This argument can be  generalized
to more complicated sequences $\mu$.
\end{remark}

\section{Weak limits}\label{sec:weak_limits}

A particular case to consider is sampling with a constant multiplicity $S$  for some integer $0\leq S\leq m-1$. 
Since every point comes with $S+1$ data, one expects that a suitably modified gap condition might be sufficient for sampling with derivatives. Based on Corollary \ref{cor:max_gap_OV}, it is natural to conjecture that
\begin{equation}\label{eq:max_gap_const}
\frac{\mg(X)}{1+S}<1
\end{equation}
is sufficient for sampling. This is indeed correct, but it does not follow from Theorem \ref{thm:main_compact}. One can show that there are sets that satisfy \eqref{eq:max_gap_const} but violate the conditions \eqref{eq:thm_compact_cond3.1}, \eqref{eq:thm_compact_cond3.2}, and \eqref{eq:thm_compact_cond3.3}. These examples  will be addressed in Section \ref{sec:discussion}. 

To study maximal gap conditions and achieve an optimal result, we will use the  technique of weak limits. These go back to Beurling~\cite{BeurlingCollected1989} and have been introduced for the investigation of \sis s in \cite{GroechenigEtAl2017, GroechenigEtAl2019}. 

We recall  Beurling's notion of a weak limit of a sequence of sets. A sequence $\left(X_n\right)_{n\in\N}$ of subsets of $\R$ \textit{converges weakly} to a set $Y\subseteq\R$, denoted by  $X_n\overset{w}{\longrightarrow} Y$, if for every open bounded interval $(a,b)$ and every $\varepsilon>0$, there exists an $n_0\in\N$ such that for all $n\geq n_0$,
\begin{equation}
X_n \cap (a,b)\subseteq Y+(-\varepsilon, \varepsilon)\qquad\text{and}\qquad Y\cap (a,b)\subseteq X_n+(-\varepsilon,\varepsilon).
\end{equation}
Given a tuple of sets $\overrightarrow{X}= \left( X^0,\dots, X^S\right)$, a tuple $\overrightarrow{Y}=\left(Y^0,\dots,Y^S\right)$ is called a \emph{weak limit of integer translates} of $\overrightarrow{X}$ if there exists a sequence $(k_n)_{n\in\N}\subseteq\Z$ such that $X^s-k_n\overset{w}{\longrightarrow}Y^s$ for all $0\leq s\leq S$. An important part of the definition is the fact that the sequence of translates is the same for all $Y^s$.
The set of weak limits of integer translates of $X$ is denoted by  $W_\Z(\overrightarrow{X})$. 
Weak limits enter sampling theory through the  following characterization of sampling sets \cite[Thm.~2.1]{GroechenigEtAl2019}.
\begin{theorem}\label{thm:sampling_weak_limits}
Assume that $\Phi= \left( \Phi^0, \dots, \Phi^S \right)$ consists of continuous functions with a minimal decay property $|\Phi ^s(x)| \leq C(1+|x|)^{-1-\varepsilon}$ and $\Phi $ has stable integer shifts. Let $\overrightarrow{X} = \left(X^0,\dots, X^S\right)$ be a tuple of separated sets. Then the following are equivalent. 
\begin{enumerate}[(i)]
\item $\overrightarrow{X}$ is a sampling set for $V^p(\Phi)$ for some $p\in[1,\infty]$.
\item $\overrightarrow{X}$ is a sampling set for $V^p(\Phi)$ for all $p\in[1,\infty]$.
\item Every weak limit  $\overrightarrow{Y}\in W_\Z(\overrightarrow{X})$ is a sampling set for $V^\infty (\Phi)$.
\item Every weak limit  $\overrightarrow{Y}\in W_\Z(\overrightarrow{X})$ is a uniqueness set for $V^\infty(\Phi)$.
\end{enumerate}
\end{theorem}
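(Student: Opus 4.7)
The plan is to follow Beurling's classical weak-limit strategy, adapted to vector-valued shift-invariant spaces. The trivial implications are (ii) $\Rightarrow$ (i) and (iii) $\Rightarrow$ (iv); so the substance lies in (i) $\Rightarrow$ (iii) and (iv) $\Rightarrow$ (ii), and these together close the cycle.

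For (i) $\Rightarrow$ (iii): Suppose $\overrightarrow{X}$ is sampling for $V^p(\Phi)$ with constants $A_p, B_p$. Since $V^p(\Phi)$ is invariant under integer translations and the generator has stable integer shifts, the sampling inequality holds with the same constants for every translate $\overrightarrow{X}-k$, $k\in \Z$. Let $\overrightarrow{Y}$ be a weak limit along some sequence $k_n$. I would fix $F\in V^\infty(\Phi)$ with coefficient sequence $c\in\ell^\infty(\Z)$, truncate $c$ to a finite sequence $c^R$ supported in $[-R,R]\cap \Z$ to obtain a compactly supported $F_R\in V^p(\Phi)$ for all $p$, and apply the sampling inequality for $\overrightarrow{X}-k_n$ to $T_{-k_n}F_R$. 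The decay $|\Phi^s(x)|\leq C(1+|x|)^{-1-\varepsilon}$ guarantees equicontinuity of the $F_R^s$, so on any fixed bounded interval the values $F_R^s$ evaluated on $\overrightarrow{X}-k_n$ approach those on $\overrightarrow{Y}$ by the definition of weak convergence. A controlled letting $R\to\infty$ (for $p=\infty$ this needs the proof to be done carefully, proving the sampling inequality first on the dense subset of compactly supported $F$ and then extending) yields the sampling inequality for $\overrightarrow{Y}$ on $V^\infty(\Phi)$.

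For (iv) $\Rightarrow$ (ii): This is the main obstacle. I would argue by contradiction. If $\overrightarrow{X}$ fails to be sampling for some $V^p(\Phi)$, one constructs $F_n\in V^p(\Phi)$ with $\|F_n\|_p=1$ but $\sum_s\sum_{x\in X^s}|F_n^s(x)|^p\to 0$. Using stable integer shifts, the coefficient sequences $c_n\in\ell^p(\Z)$ satisfy $\|c_n\|_p \asymp 1$, and by a standard localization one finds integers $k_n\in\Z$ with $\|T_{-k_n}F_n\|_{L^\infty(I)}$ bounded below on a bounded interval $I$. Passing to a subsequence, I would use Banach-Alaoglu on the shifted coefficient sequences (which are uniformly bounded in $\ell^\infty$) to obtain pointwise limits $c_n(\cdot+k_n)\to c_\infty\in\ell^\infty(\Z)\setminus\{0\}$, and use the weak compactness of separated sets (every separated sequence of integer translates has a weakly convergent subsequence) to extract a common weak limit $\overrightarrow{Y}$ of $\overrightarrow{X}-k_n$. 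The decay and continuity of $\Phi$ let me pass to the limit and identify $F_\infty = \sum_\ell c_{\infty,\ell} T_\ell \Phi\in V^\infty(\Phi)$, nonzero since $\Phi$ has stable shifts. The samples of $F_n(\cdot+k_n)$ on $\overrightarrow{X}-k_n$ tend to zero, and by the pointwise equicontinuity transfer, $F_\infty^s$ vanishes on all of $Y^s$, contradicting that $\overrightarrow{Y}$ is a uniqueness set.

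The hardest step is the simultaneous passage to the limit: one must verify that pointwise convergence of the coefficient sequences, weak convergence of the sampling sets, and uniform bounds coming from the decay of $\Phi$ combine to give vanishing of the limit function on the entire weak-limit set. Two technical points deserve care: first, that the chosen shifts $k_n$ actually produce a nontrivial weak limit of coefficients (which requires a lower-mass argument using the stability inequalities and the compactly-$\ell^p$-approximated structure of $V^p(\Phi)$); second, that for the implication valid across all $p$, one handles the endpoint $p=\infty$ via the dual pair $(V^1(\Phi),V^\infty(\Phi))$ and restricts convergence testing to compactly supported coefficient sequences where everything is tame.
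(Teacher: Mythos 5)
The paper does not actually prove Theorem \ref{thm:sampling_weak_limits}: it is quoted from \cite[Thm.~2.1]{GroechenigEtAl2019}, and the Appendix only sketches the variant needed for generators with jump discontinuities. The machinery there is different from yours: the sampling property is encoded in the pre-Gramian matrix $P_X(\Phi)=\big(\Phi^s(x-k)\big)$, the equivalence (i)$\Leftrightarrow$(ii) is obtained from a noncommutative Wiener lemma (a matrix with off-diagonal decay that is bounded below on one $\ell^{p_0}$ is bounded below on every $\ell^p$, \cite[Prop.~8.1]{GroechenigEtAl2015}), and the weak-limit implications are then run only at the endpoints $p=\infty$ and $p=1$, via a duality (surjectivity of the pre-adjoint on $\ell^1$) and weak-$*$ convergence of measures in $C_0^*(\R)$.

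Your proposal has a genuine gap exactly where this Wiener-lemma input is needed, namely in the passage between a fixed $p<\infty$ and $p=\infty$. In (i)$\Rightarrow$(iii), applying the $V^p$ inequality to truncations $F_R$ of $F\in V^\infty(\Phi)$ cannot produce a uniform $V^\infty$ bound: the $\ell^p$ norm of the truncated coefficients may be as small as $\norm{c}_\infty$ while the number of active samples grows like $R$, so the resulting pointwise bound degrades like $R^{-1/p}$; moreover, compactly supported coefficient sequences are not norm-dense in $\ell^\infty(\Z)$, so ``prove on a dense subset and extend'' is not available at $p=\infty$. In (iv)$\Rightarrow$(ii), if sampling fails for some $p<\infty$, the normalized sequences with $\norm{c_n}_p=1$ may satisfy $\norm{c_n}_\infty\to 0$ (mass spread over $\sim n$ indices of size $n^{-1/p}$); then every weak-$*$ limit of the shifted coefficients is zero, your $F_\infty$ vanishes identically, and no contradiction with uniqueness on the weak limit is reached. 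The ``lower-mass argument'' you invoke cannot repair this by soft means — this $p$-independence is precisely the nontrivial content supplied by the Wiener lemma (Sjöstrand-type techniques in \cite{GroechenigEtAl2015,GroechenigEtAl2017}). Your compactness scheme does work once one first reduces to $p=\infty$, i.e.\ proves the weak-limit equivalences for $V^\infty$ and separately establishes that boundedness below of the pre-Gramian on one $\ell^p$ implies it on all $\ell^p$; without that reduction the two substantive implications are not established.
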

When sampling on $(X,\mu_X)$, this theorem is applied to the generator $\Phi = (\varphi, \varphi',\dots,\varphi^{(S)})$ and the sets
\begin{equation}
X^{s}:=\lbrace x\in X : \mu_X(x)\geq s\rbrace,\qquad 0\leq s\leq \max\left\lbrace \mu_X(x) : x\in X\right\rbrace.
\end{equation} 
In this case, $\overrightarrow{X}-k_n $ can be identified with $(X,\mu_X)-k_n\coloneqq (X-k_n, \mu_{X-k_n})$, where
\begin{equation}\label{eq:mu_of_shift}
\mu_{X-k_n}(x-k_n) = \mu_X(x),\qquad x\in X.
\end{equation}
For separated sets, we can use an alternative characterization of weak convergence (cf. \cite[Lem.~4.4.]{GroechenigEtAl2015}, \cite[Prop.~3.2.]{GroechenigEtAl2019}).
\begin{proposition}\label{prop:weak_limit_separated}
Let $(X,\mu_X)$ be a separated set with multiplicity function $\mu_X:X\to \{0,\dots, S\}$, $(Y,\mu_Y)$ be a set with multiplicity function $\mu_Y$, and $\left(k_n\right)_{n\in\N}\subseteq\Z$ a sequence of integers. Then $X^s-k_n\overset{w}{\longrightarrow} Y^s$ as $n\to \infty$ for all
$0\leq s\leq S$, if and only if
\begin{equation}
\sum\limits_{x\in X}\left(1+\mu_X(x)\right)\delta_{x-k_n} \longrightarrow \sum\limits_{y\in Y}\left(1+\mu_Y(y)\right)\delta_y,\qquad\text{ as }n\longrightarrow\infty,
\end{equation}
in the $(C_c^*,C_c)$-topology (where $C_c$ denotes the class of continuous functions with compact support).
\end{proposition}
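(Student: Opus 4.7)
The core identity is combinatorial: because $X^0\supseteq X^1\supseteq\dots\supseteq X^S$, for every $x\in X$ one has $\#\{s:0\le s\le S,\ x\in X^s\}=1+\mu_X(x)$. This immediately yields the decomposition
\[
\sum_{x\in X}(1+\mu_X(x))\delta_{x-k_n}=\sum_{s=0}^{S}\sum_{x\in X^s}\delta_{x-k_n},
\]
with the analogous identity on the $Y$ side. Note that each $X^s$ is a subset of the $\delta$-separated set $X$ and is therefore itself $\delta$-separated. The plan is to use this decomposition to reduce both directions to the unweighted case, which is precisely \cite[Lem.~4.4]{GroechenigEtAl2015} / \cite[Prop.~3.2]{GroechenigEtAl2019}: for a separated set $A$, $A-k_n\overset{w}{\to}B$ is equivalent to $\sum_{x\in A}\delta_{x-k_n}\to\sum_{b\in B}\delta_b$ in the $(C_c^\ast,C_c)$-topology.

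For the forward implication, assume $X^s-k_n\overset{w}{\to}Y^s$ for every $s$. Apply the unweighted result to each separated set $X^s$ to obtain $\sum_{x\in X^s}\delta_{x-k_n}\to\sum_{y\in Y^s}\delta_y$ in the $(C_c^\ast,C_c)$-topology, and sum these $S+1$ convergent sequences termwise.

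For the converse, write $\mu_n:=\sum_{x\in X}(1+\mu_X(x))\delta_{x-k_n}$ and $\mu:=\sum_{y\in Y}(1+\mu_Y(y))\delta_y$, and assume $\mu_n\to\mu$ vaguely. First verify that $Y$ is separated: if distinct $y,y'\in Y$ satisfied $|y-y'|<\delta/2$, testing against bumps localized in disjoint tiny neighborhoods of $y$ and $y'$ would force two atoms of $\mu_n$ to lie within distance $<\delta$ for all large $n$, contradicting the $\delta$-separation of $X-k_n$. Thus $Y$ is $\delta'$-separated for some $\delta'>0$. Now fix an interval $(a,b)$ and $\varepsilon>0$, set $\eta:=\min\{\varepsilon,\delta/4,\delta'/4\}$, and for each of the finitely many $y\in Y\cap(a,b)$ choose $\phi_y\in C_c(\R)$ with $\phi_y\equiv 1$ on $[y-\eta,y+\eta]$ and $\supp\phi_y\subseteq[y-2\eta,y+2\eta]$, chosen disjoint from the supports for other $y'\in Y\cap(a,b)$. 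Vague convergence gives $\mu_n(\phi_y)\to 1+\mu_Y(y)$, and by the $\delta$-separation of $X-k_n$ the support of $\phi_y$ contains at most one atom of $\mu_n$; for large $n$ it must contain exactly one, say at $x_n-k_n$, whose integer weight $1+\mu_X(x_n)$ must equal the integer $1+\mu_Y(y)$. Hence $\mu_X(x_n)=\mu_Y(y)$, so $x_n-k_n\in X^s-k_n$ exactly when $y\in Y^s$, which produces the inclusion $Y^s\cap(a,b)\subseteq (X^s-k_n)+(-\varepsilon,\varepsilon)$ for large $n$. A symmetric argument, starting from atoms of $X-k_n$ inside a slightly enlarged interval and using that their weights must match nearby atoms of $\mu$, gives the reverse inclusion $(X^s-k_n)\cap(a,b)\subseteq Y^s+(-\varepsilon,\varepsilon)$.

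The main obstacle is the converse direction: the single weighted measure conflates the $S+1$ layers $X^s$, so one must use the $\delta$-separation of $X$ (ensuring atoms are well-isolated) together with the integer-valued weights $1+\mu_X(x)\in\{1,\dots,S+1\}$ (ensuring that the multiplicity can be read off exactly from $\mu_n$) in order to recover the full stratification layer by layer.
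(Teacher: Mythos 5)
The paper itself does not prove this proposition; it is quoted from the literature (\cite[Lem.~4.4]{GroechenigEtAl2015}, \cite[Prop.~3.2]{GroechenigEtAl2019}), so your argument has to stand on its own. Its skeleton is sound: the layer decomposition $\sum_{x\in X}(1+\mu_X(x))\delta_{x-k_n}=\sum_{s=0}^{S}\sum_{x\in X^s}\delta_{x-k_n}$ is correct, each $X^s$ inherits the separation of $X$, and the forward implication (apply the unweighted equivalence to each of the $S+1$ layers and add the finitely many vaguely convergent sequences) is fine, as is your preliminary observation that the limit $Y$ must be separated.

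The converse --- which you yourself identify as the main obstacle --- has a genuine gap at its crucial step. From $\mu_n(\phi_y)\to 1+\mu_Y(y)$ and the fact that $\supp\phi_y$ carries at most one atom of $\mu_n$, you may conclude that for large $n$ there is exactly one atom $x_n-k_n$ with $\phi_y(x_n-k_n)>0$ and $(1+\mu_X(x_n))\,\phi_y(x_n-k_n)\to 1+\mu_Y(y)$; but this does \emph{not} force $1+\mu_X(x_n)=1+\mu_Y(y)$, because the atom may sit in the annulus $\eta<|t-y|<2\eta$ where $0<\phi_y<1$ (weight $2$ times value $\tfrac12$ has the same limit as weight $1$ times value $1$). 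The single test function only yields $\mu_X(x_n)\ge\mu_Y(y)$ eventually, so the identification ``$x_n-k_n\in X^s-k_n$ exactly when $y\in Y^s$'' is not established as written. The repair is standard but must be carried out: either test additionally with a function that is $1$ on the annulus and whose support avoids $Y$ (its $\mu$-integral is $0$, which eventually pushes the atom into the plateau $[y-\eta,y+\eta]$, where $\mu_n(\phi_y)=1+\mu_X(x_n)$ and integrality gives equality), or pair $\phi_y$ with a larger plateau function $\psi_y\equiv 1$ on $[y-2\eta,y+2\eta]$, still supported within the separation radius of $X$ and of $Y$, so that $\mu_n(\psi_y)=1+\mu_X(x_n)\to 1+\mu_Y(y)$. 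The same exact matching is what your ``symmetric argument'' for the inclusion $(X^s-k_n)\cap(a,b)\subseteq Y^s+(-\varepsilon,\varepsilon)$ leans on; there you additionally need to exclude atoms of $\mu_n$ in $(a,b)$ that stay far from $Y$, which requires one more test function supported in $[a,b]\setminus\big(Y+(-\varepsilon,\varepsilon)\big)$ and is only gestured at. (A cosmetic point: with $\eta=\min\{\varepsilon,\delta/4,\delta'/4\}$ your atom is only within $2\eta\le 2\varepsilon$ of $y$; take $\eta\le\varepsilon/2$.) With these additions the argument goes through and matches the approach of the cited results.
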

For splines, an additional difficulty arises that is not formally covered by Theorem \ref{thm:sampling_weak_limits}. The last derivative $\varphi^{(m-1)}$ is only piecewise continuous with finitely many jump discontinuities. In this case, we have to impose an additional condition on the $m$-th sampling set. The modified version of Theorem \ref{thm:sampling_weak_limits} goes as follows:
\begin{corollary}\label{cor:sampling_weak_mult}
Let $\varphi\in C^{S-1}(\R)$ be a compactly supported, piecewise $C^{S}$-function with finitely many jump discontinuities $\mathcal{J}\subseteq \R$. Further, assume that $\varphi$ has stable integer translates. Let $X$ be a separated set with multiplicity function $\mu_X :X\to \lbrace 0,\dots S\rbrace$ satisfying
\begin{equation}\label{eq:dist_requirement}
    \mathrm{dist}\big(\lbrace x\in X : \mu_X(x) = S\rbrace \,,\mathcal{J}+\Z\big)>0.
\end{equation}
Then the following statements are equivalent: 
\begin{enumerate}[(i)]
\item $(X,\mu_X)$ is a sampling set for $V^p(\varphi)$ for some $p\in[1,\infty]$.
\item $(X,\mu_X)$ is a sampling set for $V^p(\varphi)$ for all $p\in[1,\infty]$.
\item Every weak limit  $(Y, \mu_Y)\in W_\Z(X,\mu_X)$ is a sampling set for $V^\infty (\varphi)$.
\item Every weak limit  $(Y,\mu_Y)\in W_\Z(X,\mu_X)$ is a uniqueness set for $V^\infty(\varphi)$.
\end{enumerate}
\end{corollary}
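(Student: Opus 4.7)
The plan is to reduce the statement to Theorem \ref{thm:sampling_weak_limits} by encoding the sampling-with-derivatives problem as a standard sampling problem for the vector-valued generator $\Phi = (\varphi, \varphi', \dots, \varphi^{(S)})$. Sampling of $F\in V^p(\Phi)$ on the tuple $\overrightarrow{X} = (X^0,\dots, X^S)$ with $X^s = \{x\in X : \mu_X(x) \geq s\}$ coincides with sampling of $f\in V^p(\varphi)$ on $(X,\mu_X)$. All components of $\Phi$ except $\Phi^S = \varphi^{(S)}$ are continuous, and the hypothesis $\varphi \in C^{S-1}$ piecewise $C^S$ with jumps in $\mathcal{J}$ means that $\Phi^S$ is piecewise continuous with discontinuities in $\mathcal{J} + \Z$ on any shift. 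Theorem \ref{thm:sampling_weak_limits} cannot be invoked directly because of this; the distance hypothesis \eqref{eq:dist_requirement} is exactly what is needed to bridge the gap.

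I would first verify the trivial implications. The equivalence (i) $\Leftrightarrow$ (ii) follows by the same truncation/extension arguments already used in \cite{AldroubiGroechenig2001} and reproduced in Theorem \ref{thm:upper_bound_W_jumps}, the implication (iii) $\Rightarrow$ (iv) is immediate, and (ii) $\Rightarrow$ (iii) follows from the following transfer: if $(Y,\mu_Y)\in W_\Z(X,\mu_X)$ arises from translates $k_n$, then $(Y,\mu_Y)$ inherits $\mathrm{dist}(Y^S, \mathcal{J}+\Z)>0$ since each $X-k_n$ does (integer shifts stabilize $\mathcal{J}+\Z$), and Proposition \ref{prop:weak_limit_separated} together with a standard semicontinuity argument on the sampling functionals yields the same sampling constants in the limit.

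The main work lies in (iv) $\Rightarrow$ (ii). Suppose $(X,\mu_X)$ fails to be sampling for $V^\infty(\varphi)$, so there is a sequence $f_n\in V^\infty(\varphi)$ with $\|f_n\|_\infty = 1$ and $\sup_{x\in X,\, s\le \mu_X(x)} |f_n^{(s)}(x)| \to 0$. Pick $y_n\in\R$ with $|f_n(y_n)| \geq 1/2$, set $k_n = \lfloor y_n\rfloor$, and replace $f_n$ by $T_{-k_n} f_n$. By a diagonal extraction based on Proposition \ref{prop:weak_limit_separated}, pass to a subsequence along which $(X-k_n, \mu_{X-k_n})$ converges weakly to some $(Y,\mu_Y)$; by stability of $\varphi$ and weak-$*$ compactness of the coefficient sequences in $\ell^\infty$, pass to a further subsequence for which $T_{-k_n} f_n$ converges locally uniformly (together with all derivatives of order $\le S-1$, away from $\mathcal{J}+\Z$ also for the $S$-th derivative) to some $f\in V^\infty(\varphi)$ with $\|f\|_\infty \geq 1/2$. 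The goal is to show $f^{(s)}$ vanishes on $Y^s$ for all $s$, which contradicts (iv).

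The hard part, which is the whole point of the corollary, is showing that $f^{(S)}$ vanishes on $Y^S$. Given $y\in Y^S$, weak convergence produces $x_n\in X^S - k_n$ with $x_n \to y$. The distance hypothesis \eqref{eq:dist_requirement} is preserved under integer translation, so $\mathrm{dist}(X^S - k_n,\, \mathcal{J}+\Z) \geq \eta > 0$ uniformly in $n$, and hence also $\mathrm{dist}(y,\, \mathcal{J}+\Z) \geq \eta$. On a neighbourhood of $y$ of radius $\eta/2$, $\Phi^S$ is continuous in every relevant translate, and $T_{-k_n} f_n^{(S)}$ converges uniformly to $f^{(S)}$ there. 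Therefore $f^{(S)}(y) = \lim f_n^{(S)}(x_n) = 0$. The remaining derivatives $s < S$ pose no problem because $\Phi^s$ is continuous, so the standard argument of \cite[Thm.~2.1]{GroechenigEtAl2019} applies verbatim. This contradicts the hypothesis that $(Y,\mu_Y)$ is a uniqueness set in $V^\infty(\varphi)$, completing the proof.
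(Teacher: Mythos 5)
Your step (iv)\,$\Rightarrow$\,(ii) is essentially the right Beurling-type compactness argument, and you correctly identify where the hypothesis \eqref{eq:dist_requirement} enters there: near a point $y\in Y^S$ all relevant translates of $\varphi^{(S)}$ are continuous, so the vanishing of the $S$-th derivative passes to the limit. The genuine gap is in (ii)\,$\Rightarrow$\,(iii), which you dispose of with ``a standard semicontinuity argument on the sampling functionals.'' No such argument works as stated: for a \emph{fixed} $f\in V^\infty(\varphi)$ the inequality $\sup_{x\in X-k_n}\sum_{s\le\mu(x)}|f^{(s)}(x)|\ge A\|f\|_\infty$ gives near-extremal sample points that may escape to infinity, where $X-k_n$ bears no relation to $Y$; weak convergence only controls $X-k_n$ on compact sets, and the $\ell^\infty$ sampling inequality does not localize. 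This is precisely why the paper (following \cite{GroechenigEtAl2015,GroechenigEtAl2017}) proves (ii)\,$\Rightarrow$\,(iii) by duality: $\ell^\infty$-sampling is equivalent to surjectivity of the transposed pre-Gramian on $\ell^1$; one solves $c_{k-k_n}=\sum_{x}a^n_x\varphi(x-k)$ with $\|a^n\|_1\lesssim\|c\|_1$, encodes the solutions as measures $\mu_n$ supported on $X-k_n$, and extracts a weak$^*$ limit supported on $Y$. Moreover, weak$^*$ convergence of measures only pairs against \emph{continuous} test functions, so the discontinuous component of the generator must be replaced by a continuous $\varphi_0$ that agrees with $\varphi$ off an $\varepsilon$-neighbourhood of $\mathcal{J}+\Z$; the hypothesis \eqref{eq:dist_requirement} is exactly what makes this replacement harmless, because the measures and the limit set live at positive distance from $\mathcal{J}+\Z$. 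This second, decisive use of the distance condition is entirely absent from your proposal.

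A secondary inaccuracy: (i)\,$\Leftrightarrow$\,(ii) is not a ``truncation/extension'' matter and is not contained in Theorem \ref{thm:upper_bound_W_jumps}, which only yields the upper bound. The $p$-independence of the lower sampling bound rests on a noncommutative Wiener-type lemma (Prop.~8.1 of \cite{GroechenigEtAl2015}) applied to the pre-Gramian $\bigl(\varphi^{(s)}(x-k)\bigr)$, which applies since the generator is bounded and compactly supported; this is how the paper argues, and it needs no continuity, so \eqref{eq:dist_requirement} is indeed not used there. Organizationally, your reduction to the vector-valued generator $\Phi=(\varphi,\dots,\varphi^{(S)})$ is fine and matches how the paper lifts its scalar appendix version to multiplicities, but to complete the proof you must supply the duality step (or an equivalent mechanism) for (ii)\,$\Rightarrow$\,(iii); semicontinuity alone does not do it.
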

\begin{remark}
The equivalence of \textit{(i)} and \textit{(ii)} holds without condition \eqref{eq:dist_requirement}. For PEB-splines of order $m$, \eqref{eq:dist_requirement} is only needed if $X^{m-1} = \lbrace x\in X : \mu(x) = m-1\rbrace$ is non-empty, i.e., when $(m-1)$-th derivatives are part of the data. If $X^{m-1}=\emptyset$, then \eqref{eq:dist_requirement} is void and not required.

The proof of Corollary \ref{cor:sampling_weak_mult} is a small generalization of \cite[Thm.~3.1]{GroechenigEtAl2017}. For the sake of completeness, we sketch the modifications in the  
\hyperref[sec:appendix]{Appendix}.
\end{remark}

When  we  label  a separated set $X\subseteq\R$ as a strictly increasing sequence, we can define the weighted maximum gap as 
\begin{align}
    \mg(X,\mu_X)& \coloneqq \max\bigg\lbrace
    \sup\limits_{j\in\Z}\frac{x_{j+1}-x_{j}}{1+\mu_X(x_j)},\,
    \sup\limits_{j\in\Z}\frac{x_{j+1}-x_{j}}{1+\mu_X(x_{j+1})}\bigg\rbrace \notag \\
    & =  \sup\limits_{j\in\Z}\,\frac{x_{j+1}-x_{j}}{1+\min\{\mu_X(x_j), \mu_X(x_{j+1})\}} \, .
\label{c3}
\end{align}
We  will show that $\mg(X,\mu_X)<1$ 
is sufficient for a set to be a sampling set.

To apply Corollary \ref{cor:sampling_weak_mult}, we first  study  the weighted maximum gap
of a weak limit $(Y,\mu_Y)\in W_\Z(X,\mu_X)$. 

\begin{lemma}\label{lemma:MG_weak_preserved}
Let $(X,\mu_X)$ be a separated set with a multiplicity function $\mu_X$ and $(Y,\mu_Y)$ be a  weak limit of $(X,\mu_X)$. Then 
\begin{equation}
\mg(Y,\mu_Y) \leq \mg(X,\mu_X).
\end{equation}
\end{lemma}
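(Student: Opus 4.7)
The plan is to use the measure-theoretic characterization of weak convergence provided by Proposition~\ref{prop:weak_limit_separated}. Write $Y = (y_j)_{j\in\Z}$ in strictly increasing order and fix a consecutive pair $y_j < y_{j+1}$. It suffices to show
\begin{equation}
y_{j+1} - y_j \;\leq\; \mg(X,\mu_X)\cdot\bigl(1+\min\{\mu_Y(y_j),\mu_Y(y_{j+1})\}\bigr),
\end{equation}
since taking the supremum in $j$ then yields $\mg(Y,\mu_Y) \leq \mg(X,\mu_X)$.

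Let $\delta$ be the separation constant of $X$; the same constant is inherited by $Y$, because two distinct points of $Y$ must arise from two disjoint sequences of points of $X-k_n$. Pick $\varepsilon\in(0,\delta/2)$ small enough that the three open intervals $(y_j-\varepsilon, y_j+\varepsilon)$, $(y_{j+1}-\varepsilon, y_{j+1}+\varepsilon)$, $(y_j+\varepsilon, y_{j+1}-\varepsilon)$ are pairwise disjoint, their endpoints avoid $Y$, and the middle interval contains no point of $Y$. By Proposition~\ref{prop:weak_limit_separated}, the weighted counting measures $\mu_n := \sum_{x\in X}(1+\mu_X(x))\delta_{x-k_n}$ converge vaguely to $\sum_{y\in Y}(1+\mu_Y(y))\delta_y$; since the boundaries of the three intervals are $\mu$-null, the mass on each interval converges. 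For $n$ sufficiently large, these masses are $1+\mu_Y(y_j)$, $1+\mu_Y(y_{j+1})$, and $0$, respectively.

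The separation condition $\varepsilon<\delta/2$ guarantees that each of the first two intervals contains at most one atom of $\mu_n$; combined with the mass count, this forces exactly one point $x_n\in X$ with $x_n-k_n\in(y_j-\varepsilon,y_j+\varepsilon)$ and multiplicity $\mu_X(x_n)=\mu_Y(y_j)$, and analogously one point $\tilde x_n\in X$ corresponding to $y_{j+1}$ with $\mu_X(\tilde x_n)=\mu_Y(y_{j+1})$. The vanishing mass on $(y_j+\varepsilon, y_{j+1}-\varepsilon)$ means no point of $X-k_n$ lies strictly between, so $x_n$ and $\tilde x_n$ are consecutive in $X$.

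Applying the definition of $\mg(X,\mu_X)$ to this consecutive pair gives
\begin{equation}
\tilde x_n - x_n \;\leq\; \mg(X,\mu_X)\cdot\bigl(1+\min\{\mu_X(x_n),\mu_X(\tilde x_n)\}\bigr) = \mg(X,\mu_X)\cdot\bigl(1+\min\{\mu_Y(y_j),\mu_Y(y_{j+1})\}\bigr).
\end{equation}
Since $|(\tilde x_n - x_n) - (y_{j+1}-y_j)| < 2\varepsilon$ and $\varepsilon>0$ is arbitrary, the required estimate follows. The main thing to be careful with is the bookkeeping in the choice of $\varepsilon$ (smaller than half the separation constant, compatible with the spacing of $y_j,y_{j+1}$, and avoiding $Y$ at endpoints), but after that the argument is a direct translation of weak convergence into pointwise correspondences.
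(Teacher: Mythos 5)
Your proposal is correct and follows essentially the same route as the paper: both invoke Proposition~\ref{prop:weak_limit_separated}, localize the weak convergence near $y_j$, $y_{j+1}$ and the gap between them, identify (for large $n$) consecutive points of $X-k_n$ with matching multiplicities, and then apply the definition of $\mg(X,\mu_X)$ with a limiting argument in $\varepsilon$. The only cosmetic difference is that you phrase the localization via convergence of masses of intervals with $\mu$-null boundary, while the paper carries out the same squeezing explicitly with continuous bump functions $f_0$, $f_1$.
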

\begin{proof}
Let $(Y,\mu _Y)$ by a weak limit of $(X,\mu _X)$. Since $X$ is separated, we can use Proposition \ref{prop:weak_limit_separated} to describe the
weak convergence by means of measures 
\begin{equation} \label{c5}
    \begin{split}
    \sum\limits_{x\in X}\left(1+\mu_X(x)\right)\delta_{x-k_n} \longrightarrow \sum\limits_{y\in Y} \left(1+\mu_Y(y)\right)\delta_y, \qquad \text{ as} n\longrightarrow\infty. 
    \end{split}
\end{equation}

For arbitrary $\varepsilon>0$, we  choose a pair $y_j<y_{j+1}$ of consecutive points in $Y$ such that 
\begin{equation}
    \frac{y_{j+1}-y_{j}}{1+\min\{\mu_Y(y_j), \mu_Y(y_{j+1})\}} >\mg(Y,\mu_Y)-\varepsilon.
  \end{equation}
Let $\delta >0$ be the separation constant of $X$ and $f_0,\,f_1\in C_c(\R)$ be two auxiliary functions such that $0\leq f_0,f_1\leq 1$, 
\begin{equation}
    \begin{split}
    \supp(f_0)\subseteq \left(-\tfrac{\delta}{2}, \tfrac{\delta}{2}\right),&\qquad f_0(x) = 1\ \text{for all }x\in \left(-\tfrac{\delta}{4},\tfrac{\delta}{4}\right), \\ \supp(f_1)\subseteq \left(y_j, y_{j+1}\right), &\qquad f_1(x) = 1\ \text{for all } x\in \left(y_j+\tfrac{\delta}{2},y_{j+1}-\tfrac{\delta}{2}\right).
    \end{split}
\end{equation}
Then by \eqref{c5} 
\begin{align*}
    \lefteqn{\sum\limits_{x\in X}\left(1+\mu_X(x)\right)f_0(x-k_n-y_j) =}\\
    &= \sum\limits_{x\in X} \left(1+\mu_X(x)\right) \delta_{x-k_n}(T_{y_j}f_0) \to \sum\limits_{y\in Y}\left(1+\mu_Y(y)\right)\delta_y
    (T_{y_j}f_0) = 1+\mu_Y(y_j).
\end{align*}

Since $X$ is $\delta$-separated, $f_0\geq 0$ and $ \supp(f_0)\subseteq \left(-\frac{\delta}{2}, \frac{\delta}{2}\right)$, for every $n\in\N$ there is at most one $x\in X$, say $x_{n_j}$, such that
\begin{equation}
    |x_{n_j}-k_n-y_j|<\tfrac{\delta}{2}.
\end{equation}
So $x_{n_j}-k_n \to y_j$ as $n$ tends to infinity and consequently $\mu_X(x_{n_j}) = \mu_Y(y_j)$ for all $n\in\N$. Likewise, there is a sequence $\widetilde{x}_{n_j}-k_n - y_{j+1}$ such that 
\begin{equation}\label{c6}
    |\widetilde{x}_{n_j}-k_n-y_{j+1}|<\tfrac{\delta}{2}
\end{equation}
and $\mu_X(\widetilde{x}_{n_j}) = \mu(y_{j+1})$ for all $j\in\N$ sufficiently large.

Using the weak*-convergence for $f_1$, we obtain
\begin{equation}
    \sum\limits_{x\in X} \left(1+\mu_X(x)\right) f_1(x-k_n) \longrightarrow \sum\limits_{y\in Y}\left(1+\mu_Y(y)\right)f_1(y) = 0,
\end{equation}
because $\supp(f_1)\cap Y = \emptyset$. As $f_1\geq 0$, this implies that for all sufficiently large $n\in\N$, $\supp(f_1)\cap (X-k_n ) \subseteq [y_j, y_{j}+\frac{\delta}{2})\cap [ y_{j+1}-\frac{\delta}{2}, y_{j+1})$.

By \eqref{c6} and the $\delta$-separability of the sets, these must be the points $x_{n_j}-k_n$ and
$\widetilde{x}_{n_j}-k_n$, so that necessarily  $\widetilde{x}_{n_j} = x_{n_j+1}$. Consequently, since $\mu_{X-k_n} =\mu_X(\cdot +k_n)$,
\begin{equation}
    \begin{split}
    \mg(X,\mu_X) 
    \geq &\, \frac{x_{n_j+1}-x_{n_j}}{1+\min\{\mu_{X}(x_{n_j}), \mu_{X}(x_{n_{j+1}})\}} \\
    \geq &\,
    \frac{(x_{n_j+1}-k_n)-(x_{n_j}-k_n)}{1+\min\{\mu_{X-k_n}(x_{n_j}-k_n),
    \mu_{X-k_n}(x_{n_j+1}-k_n)\}} \\[.5ex]      
    \geq &\,
    \frac{y_{j+1}-y_{j}}{1+\min\{\mu_Y(y_j), \mu_Y(y_{j+1})\}} \\[.5ex]
    > & \, \mg(Y,\mu_Y)-\varepsilon.
    \end{split}
\end{equation}
Since $\varepsilon>0$ was arbitrary, the claim follows.
\end{proof}

\section{Sampling via maximal gap and  weak limits}\label{sec:main2}

In this section, we will prove two sampling theorems that use gap conditions for the sampling set. 
These conditions  are  easier to check than the technical conditions of Theorem~\ref{thm:main_compact}. 

We first relate the weighted maximum gap to the Schoenberg-Whitney conditions~\eqref{eq:x_pos_condition}. The following  statement does not make reference to splines and only involves arrangements of points on the line. 

\begin{lemma}[Combinatorial lemma]\label{lem:maxgap_SW_conditions}
Let $m\geq 2$ and let $(X,\mu_X)$ be a separated set with multiplicity function $\mu_X$. Assume that  $\mg(X,\mu_X)<1$. Then for all $x \in X$ there exist  $N,L\in\N$, $N,L\geq 2$, depending on $x$, and points 
$$
    x^*_1 < x^*_2<\dots <x^*_{N} \text{ in } X\cap \big[\left\lfloor x \right\rfloor ,\left\lfloor x \right\rfloor+L\big]
$$
with submultiplicities $\mu _j\in \N$, 
\begin{equation} \label{eq:ii1}
    \d_j \leq \mu_X(x^*_j),   \qquad \qquad  1\leq j\leq N \, ,  
\end{equation}
satisfying the following properties:
\begin{enumerate}
\item[(i)]  dimension count: 
\begin{equation} \label{eq:hmhm3}
    \sum\limits_{j=1}^{N}(1+\d_j) = L+m-1,    
\end{equation}
\item[(ii)] Schoenberg-Whitney conditions:
\begin{equation}
    \label{eq:c7}
    x_j^* \in  ( \d_j-m+1, 1 ) + \left\lfloor x \right\rfloor  + \sum\limits_{n=1}^{j-1} (1+\d_n), \qquad 1\leq j\leq N
\end{equation}
with the usual adaptation if $\d_j = m-1$,
\item[(iii)]  
$ X\cap \big[\left\lfloor x \right\rfloor  +L-1,\left\lfloor x \right\rfloor+L\big] \neq \emptyset \, .$
\end{enumerate}
$(^*)$ If $x^*_{N} = \left\lfloor x \right\rfloor+L\in\Z$, then, in addition, we have  $\mu_{N}\leq m-2$.  
\end{lemma}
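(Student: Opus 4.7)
The plan is a greedy forward construction, tracking the normalized offset
\[
    p_j := x_j^* - \lfloor x\rfloor - s_{j-1}, \qquad s_{j-1} := \sum_{n<j}(1+\mu_n).
\]
The Schoenberg-Whitney condition \eqref{eq:c7} is precisely $p_j \in (\mu_j - m + 1, 1)$ (half-open at $\mu_j = m-1$). Since $x \in X \cap [\lfloor x\rfloor, \lfloor x\rfloor + 1)$, I initialize with any $x_1^* \in X \cap [\lfloor x\rfloor, \lfloor x\rfloor + 1)$, which yields $p_1 \in [0, 1)$. At the inductive step, writing $T_j := \lfloor x\rfloor + s_{j-1} + 1$, I would look for $x_{j+1}^* \in X \cap [T_j,\, T_j + 1 + \mu_X(x_j^*))$ and set $\mu_j := \lfloor x_{j+1}^* - T_j\rfloor \in \{0,\dots,\mu_X(x_j^*)\}$, which automatically produces $p_{j+1} \in [0,1)$. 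The weighted max gap applied to $x_j^*$ and its right-neighbor $y \in X$ gives
\[
    y - x_j^* \,<\, 1 + \min\bigl(\mu_X(x_j^*),\mu_X(y)\bigr) \,\leq\, 1 + \mu_X(x_j^*),
\]
so $y < T_j + 1 + \mu_X(x_j^*)$: the upper bound of the search window is automatic, and whenever $y \geq T_j$ the induction closes immediately.

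The principal obstacle is the \emph{sub-threshold} case $y < T_j$, in which the clean one-step recovery fails. My proposed remedy is to insert $y$ into the sequence with $\mu_j := 0$, accepting a temporary excursion $p_{j+1} \in (-1, 0)$, and to prove that the invariant is restored within at most $m-1$ further steps. The key point is that when $y$ is sub-threshold, the weighted max gap forces a compensating multiplicity profile on the subsequent points: either $\mu_X(y) \geq \mu_X(x_j^*) + 1$ (which widens the next target window to $[T_{j+1}, T_{j+1} + 1 + \mu_X(y))$), or the next-next neighbor of $y$ already lies near $T_{j+1}$; in either scenario a strict recovery becomes available. Making this rigorous requires a finite case analysis on chains of consecutive sub-threshold neighbors — whose length is bounded in terms of $m$ because each insertion decreases $p$ by less than one, and the invariant would break at $p < 1-m$ — and this is what I expect to be the technical heart of the proof.

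For termination, once $p_N$ lies in the admissible window $(\mu_N - m + 1,\,\mu_N - m + 2]$ for some $\mu_N \in \{0, \dots, \mu_X(x_N^*)\}$ at an index $N \geq 2$, I set $L := s_N - (m-1)$, a positive integer $\geq 2$. Condition (i) is then the definition of $L$; (ii) is the maintained Schoenberg-Whitney; (iii) follows because SW combined with $x_N^* \leq \lfloor x\rfloor + L$ places $x_N^*$ itself in $X \cap [\lfloor x\rfloor + L - 1,\, \lfloor x\rfloor + L]$; and (*) is enforced by SW forcing $\mu_N \leq m-2$ whenever $x_N^* = \lfloor x\rfloor + L$. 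A concrete realization: when $p_N > 0$ take $\mu_N = m-1$ with $L = s_{N-1} + 1$ (using the half-open adaptation of SW), and when $p_N = 0$ take $\mu_N = m-2$ with $L = s_{N-1} + 2$; both satisfy $L \geq 2$ for $N \geq 2$ and close the construction.
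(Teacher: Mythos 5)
Your construction leaves the actual difficulty unproved, and the claim you defer is in fact false. The heart of your argument is the assertion that after inserting a sub-threshold neighbour with submultiplicity $0$ the invariant $p_j\in[0,1)$ is ``restored within at most $m-1$ further steps'', because the gap condition allegedly forces a compensating multiplicity profile downstream. It does not: the weighted gap only bounds $x_{j+1}-x_j$ by $1+\min\{\mu_X(x_j),\mu_X(x_{j+1})\}$, so a sub-threshold neighbour need not carry any multiplicity at all, and the compensation may sit entirely at \emph{earlier} points whose multiplicity your rule $\mu_j:=\lfloor x_{j+1}^*-T_j\rfloor$ has already forfeited. Concretely, take $m=3$ and $X=\{0.5\}\cup 0.9\,\Z$ with $\mu_X(0.5)=2$ and $\mu_X\equiv 0$ elsewhere; then $\mg(X,\mu_X)=0.9<1$. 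Starting from $x=0.5$, the right neighbour $0.9$ is sub-threshold, and with your insertion rule every subsequent neighbour stays sub-threshold, the offset drifts down by $0.1$ per step forever (so neither recovery within $m-1$ steps nor your chain-length bound holds, and eventually the Schoenberg--Whitney lower bound is violated). If instead you skip to the first in-window point, the invariant survives but your termination criterion never fires: closing the count requires $p_N\in(\mu_N-m+1,\mu_N-m+2]$ with $p_N\in[0,1)$, hence a final point with $\mu_X(x_N^*)\geq m-2$, and no such point exists after $0.5$. (Skipping is also not available in general: if the sub-threshold neighbour has large $\mu_X$, the following point can overshoot the window.) So the scheme, as described, fails on admissible inputs, and the missing case analysis is not a technicality one can expect to fill in along these lines.

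The paper resolves exactly this issue by \emph{not} being greedy with the submultiplicities: it assigns the full multiplicities $\mu_j=\mu_X(x_j)$ to all selected points, defines the stopping index $K$ as the smallest index with $\lceil x_K\rceil+m-1\leq\sum_{j\leq K}(1+\mu_X(x_j))$ (the gap condition guarantees such $K$ exists and is bounded), and uses the reverse inequality valid for $j<K$ to obtain the Schoenberg--Whitney lower bounds; only the multiplicity of the last point is trimmed to make the dimension count exact, and a short list of boundary cases ($x_K\in\Z$ with trimmed multiplicity $m-1$, $x_K<1$, $K=1$) is handled by appending one extra point. In the example above this gives $K=1$, $N=2$, $L=2$, using the full multiplicity $2$ at $0.5$ together with the point $1.8$. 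If you want to salvage your forward scheme, you would have to allow the submultiplicity of a point to be chosen retroactively (i.e.\ as large as the count requires, not as small as the next point permits), at which point you are essentially reproducing the paper's counting argument.
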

Although the number of points $N$ and the length of the interval $L$ depend on $x$, they may be bounded uniformly by 
\begin{equation} 
    N \leq  1+\left\lceil (m+1)(1-\mg(X,\mu_X))^{-1} \right\rceil = N_0,\qquad \text{ and } L \leq m(N_0+m+2).
\end{equation}
\begin{proof}
The proof is split into several steps. 
We make a first  selection of  points $x_1^*,\dots, x_{K}^*$ that satisfy all conditions. In the generic case, we are done, but in some exceptional cases, we need to adapt the intermediate parameters and add more points.  

\textbf{Reduction.} 
Since the problem is invariant under integer shifts and reindexing, we may  assume without loss of generality that
$\left\lfloor x \right\rfloor=0$ and $x_1 = \min \{x\in X : x\geq 0\}$. We need to find $N$, $L\in\N$, points $x^*\in [0,L]^N$ and submultiplicities $\d\in \{ 0,\dots,m-1\}^N$ with the stated properties.       
   
\textbf{Step 1: (Preliminary) choice of $N$ and $L$.} 
The condition $\mg (X,\mu _X) <1$ leads to a bound for consecutive samples $x_{j+1} - x_j \leq \mg (X, \mu _X) (1+\mu_X(x_j))$ for all $j\in \Z $. By telescoping we obtain~\footnotemark \footnotetext{Here we use $\frac{x_{j+1}-x_{j}}{1+\mu_X(x_{j})} \leq \mg(X,\mu_X) $.}
\begin{equation} \label{eq:fi1}
    x_n-x_1 = \sum _{j=1}^{n-1} (x_{j+1}-x_j) \leq \mgg \sum_{j=1}^{n-1} (1+\mu _X(x_j)) < \sum_{j=1}^{n-1} (1+\mu _X(x_j)) \, .
\end{equation}
Thus the number of data $\sum_{j=1}^{n-1} (1+\mu _X(x_j))$ always exceeds the length of the sampling interval $x_n-x_1$ by fixed factor $\mgg ^{-1} >1$. This leaves some room for manipulation.

In particular, for $n$ large enough, $(n-1)(1-\mgg ) > m+1$, we obtain 
\begin{align}    
    \left\lceil x_{n}\right\rceil +m -1 &\leq x_n-x_1 + 2 + m-1 \notag\\ 
    &\leq \mgg \sum   _{j=1}^{n-1} (1+\mu _X(x_j)) + m+1 \notag\\
    &\leq \mgg \sum   _{j=1}^{n-1} (1+\mu _X(x_j)) + (1 - \mgg ) (n-1)\notag \\
    &\leq \sum   _{j=1}^{n-1} (1+\mu _X(x_j)) \, . \label{eq:fi2}
\end{align}
Now we define $K$ as the smallest integer with     \begin{equation}\label{eq:N_choice}
    \left\lceil x_{K}\right\rceil +m -1 \leq \sum\limits_{j=1}^{K}(1+\mu_X(x_j)). 
\end{equation}
Clearly $K\geq 1$ and by the preceding argument $K \leq (m+1)/(1-\mgg)$. This choice of $K$ implies the reverse inequality
\begin{equation} \label{eq:fi3}
    \left\lceil x_{j}\right\rceil +m -1 >
    \sum\limits_{k=1}^{j}(1+\mu_X(x_k)) \qquad \text{ for } j=1, \dots , K-1 \, . 
\end{equation}
The inequalities \eqref{eq:N_choice} and \eqref{eq:fi3} contain the key idea of the proof.

As the preliminary choice for the interval length we set
$$
L_0 = \left\lceil x_{K}\right\rceil \, . 
$$
The final choice of the number of points $N$ will be from $\{2, 3,K,K+1\}$ and $L \leq  \left\lceil x_{K}\right\rceil+ m\leq (K+1)m + m$, whence we obtain their boundedness independent of the initial choice of $x\in X$. 

\textbf{Step 2: Choice of the initial $x^*$ and their submultiplicities $\d$ when $K>1$.}
Assume first that $K> 1$. The exceptional case $K=1$ will be treated later. We choose 
$$
x_j^*\coloneqq x_j,\quad \ \d_j \coloneqq \mu_X(x_j),\qquad \
1\leq j \leq K-1 \, ,
$$
and first show the Schoenberg-Whitney conditions \eqref{eq:c7} for these points. Since  $\left\lfloor x_1\right\rfloor = 0$, for $j=1$ these are the inequalities $x_1<1$ and $ x_1 \geq 0 \geq \mu_1 -m+1$ with equality if and only if $\mu_1 = m-1$.

For all $j=  2,\dots, K-1$, \eqref{eq:fi1} yields the upper inequality
\begin{equation}\label{eq:c_lower_bound}
    \begin{split}
    x_j^*
    & = x_j = x_j-x_1 + x_1 \\ 
    & \leq \mg(X,\mu_X) \sum\limits_{n=1}^{j-1}(1+\mu_X(x_n)) + 1
    <1 + \sum\limits_{n=1}^{j-1}(1+\mu_n) \, .
    \end{split}
\end{equation}
The lower bound is contained in~\eqref{eq:fi3} via     
$$
    x_j >   \left\lceil x_{j}\right\rceil -1  > -m + \sum\limits_{n=1}^{j}(1+\d_n)   
    = 1+\d_{j} -m + \sum\limits_{n=1}^{j-1}(1+\d_n).
$$
\textbf{Step 3.}
For $j=K$ we set $x_K^*=x_K$ and  
$$
    \d_{K}\coloneqq \left\lceil x_{K}\right\rceil + m-2-\sum\limits_{j=1}^{K-1}(1+\mu_j).
$$
This choice ensures condition~\eqref{eq:hmhm3};      
\begin{equation}\label{eq:c11}
    \sum\limits_{j=1}^{K}(1+\d_j) = \left\lceil x_{K}\right\rceil
    +m-1 = L_0+m-1 \, .
\end{equation}
To check the range of $\d_K$, we use   \eqref{eq:N_choice} and obtain 
\begin{align}
    1+ \d_{K}  &=  \left\lceil x_{K}\right\rceil +
    m-1-\sum\limits_{j=1}^{K-1}(1+\mu_j)\\
    & \leq  \sum\limits_{j=1}^{K}(1+\mu_j) -
    \sum\limits_{j=1}^{K-1}(1+\d_j)  = 1+\mu_X(x_K) \, , 
\end{align}
and by \eqref{eq:fi3} for $j=K-1$
\begin{align*}
    1+ \d _K &= \lceil x_{K}\rceil + m-1-\sum\limits_{j=1}^{K-1}(1+\mu_j)\\
    & \geq \lceil x_{K-1}\rceil +
    m-1-\sum\limits_{j=1}^{K-1}(1+\mu_j) >0 \, .
\end{align*}
Since $\d _K \in \Z$, we obtain $0\leq \d_K \leq \mu _X(x_K^*)$. 
To show that $x_{K}^*$ satisfies the Schoenberg-Whitney condition \eqref{eq:c7}, we recall that for $j\leq K-1$, $\d_j = \mu_X(x_j)$, thus the estimate in \eqref{eq:c_lower_bound} holds for $x_{K}^*$ as well. The lower bound holds by choice of $\d_{K}$: 
\begin{align*}
    x_K^* = x_{K} > \left\lceil x_{K}\right\rceil -1 = \d_{K} -m+1 +\sum\limits_{n=1}^{K-1} \left(1+\d_n\right).
\end{align*}

\textbf{Step 4: Generic Case  $K>1, x_K>1$. } If $x_{K}>1$, and additionally $x_{K-1}\notin\Z$ or $\d_{K-1} \leq m-2$, then we set $N \coloneqq K$ and $L\coloneqq L_0= \lceil x_{K}\rceil$ so that $L-1 \leq x_N^* = x_K \leq L$. By construction  the points $x^*_1 < \dots <x^*_{N} $ with submultiplicities $\d_1,\dots, \d_N $ satisfy (i) - (iii) and we are done. 

\vspace{3mm}

It remains to consider the exceptional cases: (i) $x_K\geq 1$ with $x_K \in \Z$ and $\d _K = m-1 $,  (ii) $x_K < 1$, and  (iii) $K=1$. 

\textbf{Step 5. Case $x_{K}^* = x_{K}\geq 1$, $x_{K}\in\Z$, $\d_{K}= m-1$}. In this case, we add the next  sampling point $x_{K+1}$ and set
$$
    x_{K+1}^* \coloneqq x_{K+1} \quad \text{ and }\quad \d_{K+1} \coloneqq \left\lceil x_{K+1}\right\rceil - x_{K}-1 \, .
$$
Since $\left\lceil x_{K+1}\right\rceil - x_k >0$ is an integer, we have $\d _{K+1} \geq 0$.
We set $N = K+1$ and $L = \left\lceil x_{K+1}\right\rceil$ and verify items (i) --- (iii) after adding the new point $x^*_{K+1}$. We substitute \eqref{eq:c11} and the definition of $\d _K$ and obtain 
\begin{align}
    \sum\limits_{j=1}^{K+1}(1+\d_j) = \lceil x_{K} \rceil +m-1 +1+\d_{K+1} = \left\lceil x_{K+1}\right\rceil+m-1 = L+m-1. 
\end{align}
Next, 
\begin{align}
    \d_{K+1} &= \left\lceil x_{K+1}\right\rceil - \left\lceil x_{K}\right\rceil -1 < x_{K+1} -  x_{K}  < \mu_{X}(x_{K+1})+1.~\footnotemark 
\end{align}
\footnotetext{Here we use $\frac{x_{j+1}-x_{j}}{1+\mu_X(x_{j+1})} \leq \mg(X,\mu_X) <1$.}         
This  inequality of integers implies  $\d_{K+1} \leq \mu_{X}(x_{K+1})$. If also $x_{K+1} \in \Z $, then
$$
\d_{K+1} =x_{K+1}-  x_{K} -1 < \mu _X(x_{K+1}) 
$$
and thus $ \d _{K+1} \leq m-2$, which takes care of the extra condition~$(^*)$.

To verify the Schoenberg-Whitney conditions \eqref{eq:c7}, we use 
$\sum\limits_{n=1}^{K} (1+\d_n) = \lceil x_K \rceil + m-1$ from \eqref{eq:c11} and find the upper estimate from 
\begin{align*}
    x_{K+1}^* &= x_{K+1} - x_K + \lceil x_K \rceil \\
    &< 1+ \mu _X(x_K) + \sum\limits_{n=1}^{K} (1+\d_n)  - m+ 1 \leq 1+ \sum\limits_{n=1}^{K}  (1+\d_n) \,  ,
\end{align*}
whereas the lower estimate follows from
\begin{align*}
    \d _{K+1} - m+1 + \sum\limits_{n=1}^{K} (1+\d_n) &= \left\lceil x_{K+1}\right\rceil - \left\lceil x_{K}\right\rceil -1 -m+1 + \lceil x_K \rceil + m-1 \\
    &= \lceil x_{K+1} \rceil -1 < x_{K+1} \, .
\end{align*}
    
The last condition (iii) is satisfied by definition because $x_{K+1}^* \in [L,L+1)$.

\textbf{Step 6: Case $x_K <1, K>1$.} 
Let $\widetilde{K}$ be the minimal index with $ x_{\widetilde{K}}> 1$. Due to its minimality and the maximum gap condition, the points 
$ x_{\widetilde{K}-1}, x_{\widetilde{K}}$ satisfy
\begin{equation}\label{eq:c8}
    x_{K} \leq x_{\widetilde{K}-1} \leq 1 \quad \text{ and }\quad 1 <  x_{\widetilde{K}}< x_{\widetilde{K}-1} +m\leq 1+m.
\end{equation}
We set  $N\coloneqq K+1$, $L\coloneqq \big\lceil x_{\widetilde{K}} \big\rceil$, and 
$$ 
    x_{K+1}^* \coloneqq x_{\widetilde{K}},\qquad \d_{K+1}  \coloneqq \left\lceil x_{\widetilde{K}} \right\rceil-2< x_{\tilde{K}} -1 \, .
$$ 
This choice guarantees that  $L=\lceil x_{\widetilde{K}} \rceil  \geq 2$, $\d_{K+1}\geq 0$, and $x_{K+1}^* \in [L-1,L] \cap X$.
We have      
\begin{equation}\label{eq:c13}
    \d_{K+1} < x_{\widetilde{K}} -1 \leq x_{\widetilde{K}} - x_{\widetilde{K}-1}  < 1+\mu_X(x_{\widetilde{K}})~\footnotemark,
\end{equation}
and again $\d_{K+1}\leq \mu_X(x_{K+1}^*)$. 
\footnotetext{Here we use $\frac{x_{j+1}-x_{j}}{1+\mu_X(x_{j+1})} \leq \mg(X,\mu_X) <1$.} 
     
For the Schoenberg-Whitney condition \eqref{eq:c7} for $x_{K+1}$, we observe that \eqref{eq:c11} says that $\sum\limits_{j=1}^{K}(1+\d_j) = \left\lceil x_{K} \right\rceil +m-1 = m$, so that \eqref{eq:c8} says that   
$$
x_{K+1}^* = x_{\tilde{K}} < 1+m = 1+\sum\limits_{j=1}^{K}(1+\d_j) \, ,
$$ 
and
$$
\d _{K+1} - m+1 + \sum\limits_{j=1}^{K}(1+\d_j) = \lceil x_{\tilde{K}} \rceil  -2 - m+1 + m = \lceil x_{K+1}^* \rceil  -1 < x_{K+1}^*   \, .
$$ 

\textbf{Step 7: Case $K=1$.}
In this case, \eqref{eq:fi3} is void and cannot be used for   the Schoenberg-Whitney conditions  for $j>1$.  We need to modify the selection of points. We distinguish three subcases. 

If $x_1>0$, then the inequalities $\mu _X(x_1) - m + 1 \leq 0 < x_1 <1$ express the Schoenberg-Whitney condition for $x_1$. We proceed as in Step~6 and choose $x_2^* = x_{\tilde{K}}$ with $x_{\tilde{K}-1} \leq 1 < x_{\tilde{K}}$ and $\d _2 = \lceil x_{\tilde{K}} \rceil - 2$. We set $N=2$ and $L= \lceil x_{\tilde{K}} \rceil \geq
2$. Conditions (i) --- (iii) are verified readily as before.

If $x_1=0$, then we must have  $\mu _X(x_1) = m-1$.  If $x_2 \leq 1$, then we set 
\begin{align*}
    x_2^* &\coloneqq x_2,\qquad \d_2\coloneqq 0 \, , \\
    x_3^* & = x_{\tilde{K}} \qquad \d_3 = \lceil x_{\tilde{K}} \rceil - 2 \, .
\end{align*}
With $N=3$ points and interval length $L= \lceil x_{\tilde{K}} \rceil \geq 2$, all conditions (i) --- (iii) are satisfied.

If $x_1=0$ and $x_2 >1$, set $N=2$ $N\coloneqq 2$, $L\coloneqq \left\lceil x_{2} \right\rceil \geq 2$, and 
$$
    x_{2}^* \coloneqq x_{2},\qquad \d_{2}  \coloneqq \left\lceil x_{2} \right\rceil-2.
$$
Properties (i) --- (iii) are  verified as in Step~6. 

As we have covered all possible cases, this  concludes the proof.
\end{proof}

\begin{theorem}\label{thm:combinatorial_USet} 
Let $\varphi$ be a PEB-spline of order $m\in\N$ and $X\subseteq\R$ be a separated set with a multiplicity function $\mu_X:X\to\{0,\dots, m-1\}$. If 
\begin{equation}
\mg(X,\mu_X)=\max\left\lbrace  \sup\limits_{j\in\Z} \frac{x_{j+1}-x_{j}}{1+\mu_X(x_j)}\, ,\, \sup\limits_{j\in\Z} \frac{x_{j+1}-x_{j}}{1+\mu_X(x_{j+1})}\right\rbrace <1,
\end{equation}
then $(X, \mu)$ is a uniqueness set for $V^p(\varphi)$ for all $1\leq p\leq \infty$.
\end{theorem}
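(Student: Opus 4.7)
The plan is to use the Combinatorial Lemma~\ref{lem:maxgap_SW_conditions} to turn the gap hypothesis into a collection of \emph{local} Schoenberg--Whitney configurations, and then invoke the collocation matrix invertibility from Corollary~\ref{cor:matrix_multiplicities} to force all coefficients to vanish.

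Let $f=\sum_{\ell\in\Z}c_\ell T_\ell\varphi\in V^p(\varphi)$ and assume that $f^{(s)}(x)=0$ for all $x\in X$ and all $0\leq s\leq \mu_X(x)$. The strategy is to show $c_\ell=0$ for every $\ell\in\Z$. Since the maximum gap satisfies $\mg(X,\mu_X)<1$ and $\mu_X\leq m-1$, consecutive samples in $X$ differ by strictly less than $m$. In particular $X$ is unbounded above and below and the set $\{\lfloor x\rfloor : x\in X\}$ meets every interval of length~$m$.

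Fix an arbitrary $x\in X$. By Lemma~\ref{lem:maxgap_SW_conditions}, there exist integers $N,L\geq 2$, points $x_1^*<\dots<x_N^*$ in $X\cap[\lfloor x\rfloor,\lfloor x\rfloor+L]$, and submultiplicities $\d_j\leq \mu_X(x_j^*)$ satisfying the dimension count $\sum_{j=1}^N(1+\d_j)=L+m-1$ and the Schoenberg--Whitney conditions~\eqref{eq:c7}, together with the additional restriction $\d_N\leq m-2$ when $x_N^*=\lfloor x\rfloor+L\in\Z$. By~\eqref{eq:rem_restriction} applied on the interval $I=[\lfloor x\rfloor,\lfloor x\rfloor+L)$, the sample data of $f$ at the points $x_j^*$ (with derivatives up to order $\d_j$) depend only on the coefficients $c_\ell$ for $\ell=\lfloor x\rfloor-m+1,\dots,\lfloor x\rfloor+L-1$, a total of $L+m-1$ coefficients; the extra clause $\d_N\leq m-2$ when $x_N^*\in\Z$ guarantees that no sample involves a jump of $\varphi^{(m-1)}$ at the right endpoint. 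Setting up the Hermite interpolation problem in the sense of Corollary~\ref{cor:matrix_multiplicities} with $\ell_0=\lfloor x\rfloor-m+1$, the Schoenberg--Whitney conditions~\eqref{eq:c7} translate verbatim into the positional conditions~\eqref{eq:x_pos_condition}, so the collocation matrix $\MM(x^*,\d,\ell_0)$ is invertible.

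Since $f^{(s)}(x_j^*)=0$ for $0\leq s\leq\mu_X(x_j^*)$ and in particular for $0\leq s\leq \d_j$, the corresponding data vector vanishes. Invertibility of the collocation matrix then forces
\begin{equation}
c_\ell=0\quad\text{for all }\ell\in\{\lfloor x\rfloor-m+1,\dots,\lfloor x\rfloor+L-1\}.
\end{equation}
Running this argument over all $x\in X$, each application kills a whole window of at least $L+m-1\geq m+1$ consecutive coefficients centred near $\lfloor x\rfloor$. Because $\{\lfloor x\rfloor : x\in X\}$ intersects every interval of length $m$ on $\R$, the resulting union of windows covers all of $\Z$. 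Hence every $c_\ell=0$ and therefore $f\equiv 0$, which proves that $(X,\mu_X)$ is a uniqueness set for $V^p(\varphi)$ for every $p\in[1,\infty]$.

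The only delicate step is the combinatorial lemma itself, which has already been established; once it is available, the invertibility of each local collocation matrix and the covering of $\Z$ by the windows are both immediate from the maximum gap condition. The two minor technical points to handle with care are the boundary case $x_N^*=\lfloor x\rfloor+L\in\Z$, where one must use the clause $\d_N\leq m-2$ to avoid the discontinuity of $\varphi^{(m-1)}$, and the verification that the selected submultiplicities $\d_j$ do not exceed the actual multiplicities $\mu_X(x_j^*)$ available in the data, both of which are guaranteed by Lemma~\ref{lem:maxgap_SW_conditions}.
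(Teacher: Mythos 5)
Your proposal is correct, and its core is the same as the paper's: apply Lemma~\ref{lem:maxgap_SW_conditions} at a point of $X$ to obtain a local Schoenberg--Whitney configuration, use the remark around \eqref{eq:rem_restriction} (including the clause $\d_N\leq m-2$ at an integer right endpoint) to reduce to a finite Hermite problem, and invoke the invertibility of the collocation matrix from Corollary~\ref{cor:matrix_multiplicities} to kill the $L+m-1$ active coefficients. Where you differ is in the globalization. The paper propagates to the right by induction: property (iii) of the combinatorial lemma guarantees a sampling point in $[a_k+L_k-1,a_k+L_k]$, from which the next window is launched, so the chain of intervals covers $[\lfloor x\rfloor,\infty)$, and unboundedness of $X$ below finishes the argument. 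You instead run the local step at \emph{every} $x\in X$ simultaneously and observe that the coefficient windows $\{\lfloor x\rfloor-m+1,\dots,\lfloor x\rfloor+L-1\}$ cover $\Z$; this works because $\mg(X,\mu_X)<1$ and $\mu_X\leq m-1$ give $x_{j+1}-x_j<m$, hence $\lfloor x_{j+1}\rfloor-\lfloor x_j\rfloor\leq m$, while each window reaches from $\lfloor x\rfloor-m+1$ to at least $\lfloor x\rfloor+1$ (using $L\geq 2$), so consecutive windows abut or overlap. Your version thus never needs property (iii) of the lemma, only the dimension count, the Schoenberg--Whitney conditions, $L\geq2$, and the clause $(^*)$, which makes the patching slightly more direct; the paper's inductive chain, on the other hand, is what property (iii) was designed for. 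Two cosmetic cautions: the phrase ``intersects every interval of length $m$'' should be read for \emph{closed} intervals (open intervals of length $m$ can miss the floors), and the windows are not centred at $\lfloor x\rfloor$ but skewed to the left, which is exactly why the explicit overlap computation $\lfloor x_{j+1}\rfloor-m+1\leq\lfloor x_j\rfloor+1$ is the right thing to record.
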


\begin{proof}
Assume that $f^{(s)}(x) = 0$ for all $0\leq s\leq \mu_X(x)$ for all $x\in X$. We need to prove that $f\equiv 0$.

\textbf{Step 1.} 
Let $x\in X$ be arbitrary and set $a = \left\lfloor x\right\rfloor$. By Lemma~\ref{lem:maxgap_SW_conditions}, there exists an integer length $L\geq 2$ and points $a\leq x_1^* <\dots <x_N^*\leq a+L$ with multiplicities $\d_1,\dots,\d_N\in\{0,\dots, m-1\}$  satisfying properties (i) --- (iii) (correct dimension count, Schoenberg-Whitney conditions and $x_N^* \in [L-1,L]$). By assumption on $f$, we know that 
\begin{equation}
    0 = f^{(s_j)}(x^*_j) =  \Big(\sum\limits_{\ell\in\Z} c_\ell T_\ell \varphi\Big)^{(s_j)}(x^*_j) = \Big(\sum\limits_{\ell=a- m+1}^{a+ L-1} c_\ell T_\ell \varphi\Big)^{(s_j)}(x^*_j), \quad 0\leq s_j\leq \d_j,\ 1\leq j\leq N.
\end{equation}
Since the Schoenberg-Whitney conditions are satisfied for the points $x_j^*$ with multiplicities $\d _j, j=1, \dots ,N$, Corollary \ref{cor:matrix_multiplicities} asserts that  $f=0$ on $[a, a+L]$. 

\textbf{Step 2.} By induction we construct a sequence of intervals  $[a_k,a_k+L_k]$ such that 
\begin{align*}
    \bigcup\limits_{k=1}^\infty \, [a_k,a_k+L_k] &= [\left\lfloor x\right\rfloor,\infty), \\
    [a_k+L_k-1,a_k+L_k] \cap X &\neq \emptyset \\
    f &\equiv 0 \text{ on } [a_k,a_k+L_k]    \text{ for all } k\in\N \, .
\end{align*}
We start with $a_1 = \lfloor x \rfloor $ and $L_1=L$ from Step~1. Assume that we have already constructed $K$ intervals with this
property by induction. By the induction hypothesis, there exists a point 
\begin{equation}\label{eq:c10}
    \tilde x\in [a_K+L_K-1,a_K+L_K]\cap X \, .
\end{equation}
Set $a_{K+1}\coloneqq \left\lfloor \tilde x\right\rfloor$. By Step~1, there exists an $L_{K+1}\geq 2$ such that $f=0$ on the interval $[a_{K+1},a_{K+1}+L_{K+1}]$ and 
$[a_{K+1}+L_{K+1}-1,a_{K+1}+L_{K+1}] \cap X\neq \emptyset.$
The sequence $a_K + L_K$ is unbounded, because $a_{K+1} + L_{K+1}  \geq L_K + a_K-1 + 2$. Consequently, 
\begin{equation}
    \bigcup\limits_{k=1}^K [a_k,a_k+L_k] =[\left\lfloor x\right\rfloor,a_{K+1}+L_{K+1}].
\end{equation}    
Since $X$ is unbounded from below and $a_K + L_K$ is unbounded, $f$ vanishes on the union $\bigcup _{x\in X, K \to \infty}  [ \lceil x \rceil, a_K + L_K] = \R$. This shows $f\equiv 0$.
\end{proof}

We conclude the section with the proof of the sampling theorem.

\begin{theorem}[Maximum Gap Theorem] \label{thm:max_gap}
Let $\varphi$ be a PEB-spline of order $m\geq 2$
and let $X\subseteq\R$ be a separated set with multiplicity function $\mu_X:X\to \lbrace 0,\dots, m-1\}$. If the multiplicity function satisfies 
\begin{equation}
    \mathrm{dist}\left(\left\lbrace x \in X: \mu_X(x) = m-1\right\rbrace,\ \Z\,\right) >0
\end{equation}
and the weighted maximum gap of $(X,\mu_X)$ satisfies
\begin{equation}
\mg(X,\mu_X)<1,
\end{equation}
then $(X, \mu_X)$ is a sampling set for $V^p(\varphi)$, $1\leq p\leq\infty$.
\end{theorem}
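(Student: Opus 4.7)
The plan is to combine the weak-limit characterization of sampling sets (Corollary~\ref{cor:sampling_weak_mult}) with the combinatorial uniqueness result (Theorem~\ref{thm:combinatorial_USet}) and the fact that the weighted maximum gap can only decrease under passage to weak limits (Lemma~\ref{lemma:MG_weak_preserved}). In other words, the hard analytic work has already been done; the theorem will follow from assembling these pieces.

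First I would verify that Corollary~\ref{cor:sampling_weak_mult} applies. For a PEB-spline $\varphi$ of order $m$ with knots at the integers, the only discontinuities of $\varphi^{(m-1)}$ occur at integer points, so the set $\mathcal{J}+\Z$ in~\eqref{eq:dist_requirement} reduces to $\Z$. The hypothesis $\mathrm{dist}(\{x\in X:\mu_X(x)=m-1\},\Z)>0$ is therefore exactly the distance requirement of the corollary, and the stability of $\varphi$ and its first $m-1$ derivatives has been established in Lemma~\ref{lemma:PEB_stable_shifts}. Consequently, it suffices to prove that every weak limit $(Y,\mu_Y)\in W_\Z(X,\mu_X)$ is a uniqueness set for $V^\infty(\varphi)$.

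Next I would fix a weak limit $(Y,\mu_Y)\in W_\Z(X,\mu_X)$. Since $X$ is $\delta$-separated, the characterization of weak convergence in Proposition~\ref{prop:weak_limit_separated} easily implies that $Y$ is also $\delta$-separated. By Lemma~\ref{lemma:MG_weak_preserved} the weighted maximum gap satisfies
\begin{equation}
\mg(Y,\mu_Y)\leq \mg(X,\mu_X)<1.
\end{equation}
Since $\mu_Y$ takes values in $\{0,\dots,m-1\}$ as well, the hypotheses of Theorem~\ref{thm:combinatorial_USet} are met, and therefore $(Y,\mu_Y)$ is a uniqueness set for $V^\infty(\varphi)$. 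Invoking Corollary~\ref{cor:sampling_weak_mult} then upgrades this to the conclusion that $(X,\mu_X)$ is a sampling set for $V^p(\varphi)$ for every $p\in[1,\infty]$.

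The only step requiring any care is the invocation of the weak-limit corollary: one must make sure that the distance condition~\eqref{eq:dist_requirement} is needed only for the original set $X$ and not for the weak limits $Y$, since $\mu_Y$ could in principle place $m{-}1$-multiplicity points at integers. This is precisely how Corollary~\ref{cor:sampling_weak_mult} is stated, so no additional work is needed there. The heavy lifting, namely the combinatorial construction that turns the gap estimate into Schoenberg--Whitney conditions interval by interval, has already been carried out in Lemma~\ref{lem:maxgap_SW_conditions} and Theorem~\ref{thm:combinatorial_USet}, so the present proof is essentially a three-line synthesis.
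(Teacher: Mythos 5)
Your proposal is correct and is essentially identical to the paper's own proof: reduce via Corollary~\ref{cor:sampling_weak_mult} to showing every weak limit is a uniqueness set, use Lemma~\ref{lemma:MG_weak_preserved} to see that the weighted maximum gap is preserved under weak limits, and then invoke Theorem~\ref{thm:combinatorial_USet}. The additional checks you make (stability from Lemma~\ref{lemma:PEB_stable_shifts}, separation of the weak limit, and the observation that the distance condition~\eqref{eq:dist_requirement} is only required of $X$ itself) are consistent with how the corollary is stated and used in the paper.
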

\begin{proof}
By Corollary \ref{cor:sampling_weak_mult}, it suffices to prove that every weak limit of integer translates is a uniqueness set for $V^\infty(\varphi)$. Let $(Y,\mu_Y)\in W_\Z(X,\mu_X)$ be a weak limit of integer translates of $(X,\mu_X)$. Lemma {\ref{lemma:MG_weak_preserved}} asserts that $\mg(Y,\mu_Y)<1$. 
By Theorem \ref{thm:combinatorial_USet}, $(Y,\mu_Y)$ is a uniqueness set for $V^\infty(\varphi)$, and we are done.
\end{proof}

\textbf{Constant multiplicity.} 
Assume that $\mu _X \equiv s < m-1$, i.e., we sample the same number of derivatives at every point $x\in X$. In this case,
$$
    \mg (X,\mu _X)  = \frac{\mg (X)}{s+1} \, .
$$
We then obtain the following consequence.

\begin{corollary} \label{allequal}
Let $\varphi$ be a PEB-spline of order $m\geq 2$ and let $X\subseteq\R$ be a separated set. Assume that the multiplicity function is constant $\mu _X(x) = s < m-1$ for all $x\in X$. If the  maximum gap of $X$ satisfies 
\begin{equation}
\mg(X)< s+1,
\end{equation}
then $(X, \mu_X)$ is a sampling set for $V^p(\varphi)$, $1\leq p\leq\infty$.

In particular, if $\mu _X \equiv 0$ and $\mg (X) <1$, then $X$ is a sampling set for $V^p(\varphi), 1\leq p\leq \infty $. 
\end{corollary}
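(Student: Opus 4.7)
The plan is to derive Corollary \ref{allequal} as a direct specialization of the Maximum Gap Theorem (Theorem \ref{thm:max_gap}), which was just proven. The key observation is that a constant multiplicity function collapses the weighted maximum gap to a rescaling of the ordinary maximum gap.

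First, I would compute $\mg(X,\mu_X)$ under the assumption $\mu_X \equiv s$. Since both denominators $1+\mu_X(x_j)$ and $1+\mu_X(x_{j+1})$ equal $1+s$, definition \eqref{c3} simplifies to
\begin{equation}
\mg(X,\mu_X) \;=\; \sup_{j\in\Z}\frac{x_{j+1}-x_j}{1+s} \;=\; \frac{\mg(X)}{s+1}.
\end{equation}
The hypothesis $\mg(X)<s+1$ therefore translates to $\mg(X,\mu_X)<1$, which is exactly the weighted gap condition in Theorem \ref{thm:max_gap}.

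Next, I would verify the remaining auxiliary hypothesis of Theorem \ref{thm:max_gap}, namely that $\mathrm{dist}(\{x\in X:\mu_X(x)=m-1\},\Z)>0$. Since by assumption $s<m-1$, the set $\{x\in X:\mu_X(x)=m-1\}$ is empty, and the distance condition is vacuously satisfied. Applying Theorem \ref{thm:max_gap} then yields that $(X,\mu_X)$ is a sampling set for $V^p(\varphi)$ for all $1\leq p\leq\infty$, which is the claim.

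The ``in particular'' statement is the specialization $s=0$ (legitimate because $m\geq 2$ forces $0<m-1$), and the case $X=\alpha\Z$ with $\mg(X)=\alpha<s+1$ is a further specialization to a lattice. There is no real obstacle here; the substantive work was carried out in the combinatorial Lemma \ref{lem:maxgap_SW_conditions}, the weak-limit transfer Lemma \ref{lemma:MG_weak_preserved}, and Theorem \ref{thm:combinatorial_USet}, all of which feed into Theorem \ref{thm:max_gap}. The corollary is purely a matter of unwinding definitions.
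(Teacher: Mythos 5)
Your proposal is correct and follows exactly the paper's route: the paper also notes that constant multiplicity $\mu_X\equiv s$ gives $\mg(X,\mu_X)=\mg(X)/(s+1)$ and then invokes Theorem \ref{thm:max_gap}, with the distance condition on $\{x:\mu_X(x)=m-1\}$ vacuous since $s<m-1$. Your explicit check of that vacuous hypothesis is a welcome bit of care the paper leaves implicit.
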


For $B$-splines and multiplicity  $\mu_X\equiv 0$, this sampling theorem was already proved in \cite{AldroubiGroechenig2000}. The method with weak limits yields a different proof.  

For  uniform sampling on $X=\alpha \Z$, the maximum gap is $\mg(\alpha\Z) = \alpha$, and $\alpha < s+1$ guarantees the sampling inequality
$$
\sum _{k\in \Z} \sum _{j=0}^s |f^{(j)}(\alpha k)|^p \asymp \|f\|_p^p \qquad \text{ for all } f\in V^p(\varphi ) \, .
$$
Note that the weighted density in this case is $D^-(\alpha \Z, \mu) = \frac{s+1}{\alpha}>1$. As $\alpha $ approaches $s+1$, we obtain sampling sets of density arbitrarily close to the critical
density. The condition $\alpha <s+1$ cannot be improved. The set $\alpha \Z$ for $\alpha >s+1$ fails to have the necessary density and is thus not a sampling set~\footnote{ One can show that
  there is always some $x_0\in \R$, such that 
  $x_0+ (s+1) \Z $ fails to be sampling.}.
In this case, Theorem~\ref{thm:max_gap} is optimal and cannot be improved. 

\section{Discussion of the methods}\label{sec:discussion}

\textbf{Comparison.} Both theorems (Theorems~\ref{thm:main_compact} and~\ref{thm:max_gap}) are based on similar ideas. We choose a suitable covering of $\R $ and
study sampling locally on each interval of the partition by means of the active collocation matrix.

In Theorem~\ref{thm:main_compact} we control the distance of a sampling point to the
boundary of the support of $\varphi $ as  a security margin. This parameter guarantees uniform bounds for the inversion of the local collocation matrices (Step~2 in the proof).

In Theorem~\ref{thm:max_gap}, we omit that condition, and the Schoenberg-Whitney conditions imply only the invertibility of the local collocation matrices. As a consequence, we obtain only uniqueness sets. In this case, the theory of weak limits and subtle criteria for sampling sets help to derive stability estimates.

Both main theorems are optimal in some sense, but their assumptions are not comparable. To see this, let us consider a few simple examples. In all examples, the generator $\varphi $ is a PEB-spline of order $m$ with support in $[0,m]$. 

\begin{example}
Let $X=\alpha \Z$ and consider $\mu _X(\alpha j) = \tfrac{1}{2} (1 + (-1)^j)$ so that $\mu _X(\alpha j) = 1$ for even $j$ and $\mu _X(\alpha j) = 0$ for odd $j$. The weighted maximum gap is then
$ \mg (X,\mu _X)  = \alpha \, .$ Consequently, by Theorem~\ref{thm:max_gap} $\alpha <1$ implies the sampling inequality
$$
    \sum _{j\in \Z } |f(\alpha j)|^p + \sum _{j\in \Z} |f'(2\alpha j)|^p \asymp \|f\|_p^p \qquad \text{ for all } f\in V^p(\varphi ) \, .
$$
However, in an interval of length $K$ there are $K/(2\alpha )$ samples with multiplicity $0$ and $K/(2\alpha )$ samples with multiplicity $1$ (up to a bounded error term), so the weighted Beurling density of $(X,\mu _X)$ is
$$  
    D^-(X , \mu _X) = \frac{3}{2\alpha} >\frac{3}{2}\, ,
$$
i.e., the maximum gap does not deliver an optimal estimate. 

By contrast, by Theorem  \ref{thm:existence}, for every $\varepsilon >0$, using the conditions of Theorem~\ref{thm:main_compact}, we can produce a set $(X,\mu _X)$  with weighted Beurling density $D^-(X,\mu _X) < 1+\varepsilon $ that is sampling. Hereby, $\mu_X$ can be chosen to be an alternating function as above, and the explicit construction in the proof of Theorem \ref{thm:existence} provides a subset of a lattice.
\end{example}

\begin{example}\label{ex:gap_vs_compact}
We provide an example of a set uniformly bounded away from $\Z$ with a constant multiplicity function $\mu\equiv S-1>0$ and maximum gap $\mg(X) = S\,\mg(X,\mu_X)<S$ that  does not satisfy the assumptions of Theorem \ref{thm:main_compact}.
While the combinatorial theorem relied heavily on \emph{overlaps}, Theorem \ref{thm:main_compact} asks for a \emph{partition} in intervals $I(k)=I_{M,L}(k)$. The last assumption \eqref{eq:thm_compact_cond3.3} implies that there exists a sampling point, namely $x_1^k \in X\cap I_{M,L}(k)$, in the first subinterval of $I_{M,L}(k)$ of length $1$;  
\begin{equation}
\begin{split}
    & [M+kL,M+kL+L) \cap [0-m+M+kL+1+\varepsilon, M+kL+1-\varepsilon) \\
    =& [M+kL, M+kL+1-\varepsilon)\subseteq [M+kL, M+kL+1) \eqqcolon J_{M,L}(k).
\end{split}
\end{equation}
Note that $M, L$ are supposed to be chosen globally. Furthermore, for any fixed $L$, one can always assume that $0\leq M<L$, as $I_{M+L,L}(k) = I_{M,L}(k+1)$. Hence, to prove that the claimed set exists, it suffices to construct a separated set $X\subseteq \R$ with maximum gap $\mg(X)<2$ and $\mathrm{dist}(X,\Z)>0$ such that for all $M\in \Z,\, L\in\N$, there exists an integer $k\in\Z$ with $X\cap J_{M, L}(k)=\emptyset$. 
The set can be constructed inductively by starting with a sufficiently dense lattice $X_0$, e.g., $X_0 = \frac{1}{n}\Z$ for some  $n\in\N , n\geq 2$, and for each $L\in\N$ and each $0\leq M\leq L$ removing the first $n$ points of $X_0$ in the interval $I_{M,L}((M+L)2^{2^L})$ (these are the points $X\cap J_{M,L}(k)$), resulting in a new sampling set $X$ given by 
$$
    X\coloneqq X_0 \setminus \bigg( \bigcup\limits_{L\in\N} \bigcup\limits_{M = 0}^{L-1} X\cap J_{M,L}((M+L)2^{2^L})\bigg).
$$
One can verify that each of the distinguished  intervals is sufficiently far apart from the others so that the maximum gap is $\mg(X) = 1+\frac{1}{n}<2$. 
By construction of the set, there are no integers $M, L \in \Z$ that provide a suitable partition of the real axis where Theorem \ref{thm:main_compact} is applicable. However, this is also a sampling set because the weighted maximum gap is $\mg(X,\mu_X) = \frac{1}{S}\mg(X) <\frac{2}{S}\leq 1$, so the conditions of Theorem \ref{thm:max_gap} are satisfied.
\end{example}

\section{Implications for Gabor systems} \label{sec:gabor}
In the final section, we exploit the connection between sampling in \sis s and Gabor frames and  improve  a result in~\cite{KloosStoeckler2014}.

We denote with $\pi(x,\omega)$, $(x,\omega)\in\R^2$, the time-frequency shift (operator) acting on functions as $\pi(x,\omega)f(t) = e^{2\pi i \omega t}f(t-x)$. In terms of stable expansions, given a \textit{window function} $\varphi$ and a discrete set $\Lambda\subseteq\R^{2}$, one asks  when a \textit{Gabor system} $\G(\varphi,\Lambda)$, defined as 
\begin{equation}
    \G(g, \Lambda) = \lbrace \pi(\lambda)\varphi : \lambda\in\Lambda\rbrace,
\end{equation}
is a \textit{(Gabor) frame}, i.e., it satisfies the frame inequality
\begin{equation}
    A\norm{f}_2^2\leq \sum\limits_{\lambda\in\Lambda}\abs{\left\langle f, \pi(\lambda) \varphi\right\rangle}^2\leq  B\norm{f}_2^2, \qquad f\in L^2(\R),
\end{equation}
with frame bounds $0 < A \leq B<\infty$ independent of $f\in L^2(\R)$. From the general frame theory, the inequality implies a reconstruction formula
\begin{equation}
    f = \sum\limits_{\lambda\in\Lambda} \langle f, \pi(\lambda)\varphi\rangle\, \psi_\lambda= \sum\limits_{\lambda\in\Lambda} \langle f, \psi_\lambda\rangle\, \pi(\lambda)\varphi, \qquad f\in L^2(\R)
\end{equation}
with 
$\left(\langle f, \pi(\lambda)\varphi\rangle\right)_{\lambda\in\Lambda}$, $\left(\langle f, \psi_\lambda\rangle\right)_{\lambda\in\Lambda}\in \ell^2(\Lambda)$ and $\psi_\lambda\in L^2(\R)$ for all $\lambda\in\Lambda$. 
Therefore, given $\varphi$, we are interested in determining those sets $\Lambda\subseteq\R^2$ whose Gabor system $\G(\varphi,\Lambda)$ is a frame.

While there is no manageable tool available for arbitrary point configurations, semi-regular sets of type $\Lambda = X\times\Z$ can be treated by the  connection between sampling in shift-invariant spaces and Gabor frames~\cite[Thm.~3.1, Thm.~3.3]{GroechenigEtAl2017}.
\begin{theorem}\label{thm:Gabor_vs_Sampling} 
Assume that $\varphi\in C(\R)$ decays as $|\varphi(x)|\lesssim (1+|x|)^{-1-\varepsilon}$ for an $\varepsilon>0$, and has stable integer shifts. Let $X\subseteq\R$ be a separated set. Then the following are equivalent:
\begin{enumerate}[(i)]
\item The family $\G(\varphi, (-X)\times \Z)$ is a frame for $L^2(\R)$.
\item $X$ is a sampling set of $V^p(\varphi)$ for some $p\in[1,\infty]$.
\item $X$ is a sampling set of $V^p(\varphi)$ for all $p\in[1,\infty]$.
\end{enumerate}
\end{theorem}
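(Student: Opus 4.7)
The plan is to use a Fourier-analytic fiber decomposition to identify the Gabor frame operator with a direct integral of sampling operators on translates of $V^2(\varphi)$, thus reducing the frame condition to a sampling condition.

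First, I would compute the contribution of the frequency shifts at a fixed $x\in X$. Since
\begin{equation}
\langle f,\pi(-x,k)\varphi\rangle = \int_\R f(t)\,\overline{\varphi(t+x)}\,e^{-2\pi i k t}\,dt
\end{equation}
is the $k$-th Fourier coefficient of $t\mapsto f(t)\overline{\varphi(t+x)}$, Parseval's identity for Fourier series applied to the $1$-periodization yields
\begin{equation}
\sum_{k\in\Z}\bigl|\langle f,\pi(-x,k)\varphi\rangle\bigr|^2 = \int_0^1\Bigl|\sum_{\ell\in\Z}f(t+\ell)\,\overline{\varphi(t+x+\ell)}\Bigr|^2\,dt.
\end{equation}
Introducing the unitary fiber map $\mathcal{U}\colon L^2(\R)\to L^2([0,1);\ell^2(\Z))$, $(\mathcal{U}f)(t)_\ell = f(t+\ell)$, and the fiber vector $\Phi_x(t)_\ell = \overline{\varphi(t+x+\ell)}$, the integrand above becomes $|\langle (\mathcal{U}f)(t),\Phi_x(t)\rangle_{\ell^2}|^2$. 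Summing over $x\in X$, the Gabor frame inequality becomes
\begin{equation}
A\int_0^1\|(\mathcal{U}f)(t)\|^2\,dt \leq \int_0^1\sum_{x\in X}\bigl|\langle (\mathcal{U}f)(t),\Phi_x(t)\rangle\bigr|^2\,dt \leq B\int_0^1\|(\mathcal{U}f)(t)\|^2\,dt.
\end{equation}

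Next, since $\mathcal{U}$ is surjective, by testing against vectors of the form $c\cdot \chi_{E}(t)$ for shrinking measurable sets $E\subseteq[0,1)$, this integrated inequality is equivalent to the fiberwise inequality
\begin{equation}
A\,\|c\|_{\ell^2}^2 \leq \sum_{x\in X}\bigl|\langle c,\Phi_x(t)\rangle\bigr|^2 \leq B\,\|c\|_{\ell^2}^2
\end{equation}
holding for all $c\in\ell^2(\Z)$ and a.e.\ $t\in[0,1)$, with the same constants. The inner sum equals $\sum_{\ell}c_\ell\varphi(t+x+\ell) = g_{\tilde c}(t+x)$, where $g_{\tilde c}(y)=\sum_\ell\tilde c_\ell\varphi(y-\ell)\in V^2(\varphi)$ and $\tilde c_\ell = c_{-\ell}$. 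By Lemma~\ref{lemma:PEB_stable_shifts} (stable integer shifts), $\|c\|_{\ell^2}\asymp\|g_{\tilde c}\|_{L^2}$, so the fiberwise condition is exactly the sampling inequality for $V^2(\varphi)$ on the translated set $t+X$, with uniform constants in $t$.

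Finally, I would argue that this uniform-in-$t$ sampling condition on $t+X$ is equivalent to sampling on $X$ itself. One direction is clear: if the fiberwise inequality holds for a.e.\ $t$, then taking any such $t$ (for which $t+X$ is separated since $X$ is) and using integer shift-invariance plus a continuity argument based on the decay of $\varphi$ and Theorem~\ref{thm:stable_shifts}, one transfers sampling to $X$. The converse direction uses the same transfer back. This establishes $(i)\Leftrightarrow(ii)$ for $p=2$. The equivalence $(ii)\Leftrightarrow(iii)$ across all $p\in[1,\infty]$ follows from Theorem~\ref{thm:sampling_weak_limits} (or Corollary~\ref{cor:sampling_weak_mult} with $\mu_X\equiv 0$), since sampling for any single $p$ is equivalent to every weak limit being a uniqueness set for $V^\infty(\varphi)$.

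The main obstacle will be the passage between the a.e.-in-$t$ fiberwise condition and a uniform sampling condition on $X$: one must carefully exploit the continuity of $t\mapsto\Phi_x(t)$ in $\ell^2(\Z)$ (which requires the decay assumption $|\varphi(x)|\lesssim(1+|x|)^{-1-\varepsilon}$) to ensure that nullsets in $t$ do not spoil uniformity, and that the shift by a non-integer $t$ does not affect the sampling bounds on the integer-shift-invariant space $V^2(\varphi)$. The decay hypothesis is also what guarantees the absolute convergence of all the periodizations and the boundedness of the sampling operator on the translates.
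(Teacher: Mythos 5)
You should first note that the paper does not prove Theorem~\ref{thm:Gabor_vs_Sampling} at all: it is quoted from \cite[Thm.~3.1, Thm.~3.3]{GroechenigEtAl2017}, so there is no internal proof to compare with, and your fiberization is indeed the standard mechanism underlying that circle of results. Up to the fiberwise statement your sketch is sound: Parseval for the $1$-periodization gives $\sum_{k\in\Z}|\langle f,\pi(-x,k)\varphi\rangle|^2=\int_0^1\big|\sum_{\ell}f(t+\ell)\overline{\varphi(t+x+\ell)}\big|^2\,dt$ (the decay hypothesis puts $\varphi$ in $W(C_0,\ell^1)$, so all periodizations converge), and testing with fields that are a fixed $c\in\ell^2$ times the indicator of a shrinking set $E$, then passing from a countable dense set of $c$ to all $c$ by the semicontinuity of $c\mapsto\sum_{x\in X}|\langle c,\Phi_x(t)\rangle|^2$, converts the integrated inequality into: for a.e.\ $t\in[0,1)$ the translated set $t+X$ is a sampling set for $V^2(\varphi)$ \emph{with the same constants} $A,B$. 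The identification of $\langle c,\Phi_x(t)\rangle$ with the evaluation of an element of $V^2(\varphi)$ at $t+x$, and the use of stability of the shifts, are also correct, and the equivalence of (ii) and (iii) via Theorem~\ref{thm:sampling_weak_limits} is fine.

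The genuine gap is your last step, ``the converse direction uses the same transfer back.'' The space $V^2(\varphi)$ is invariant only under \emph{integer} shifts, so a sampling inequality for $X$ controls $t+X$ only for $t$ in a small neighbourhood of $0$ (via the oscillation of $\varphi$); it does not yield the uniform fiber bounds for a.e.\ $t\in[0,1)$ that the Gabor frame property requires. This is not a removable technicality. Take $\varphi=B_2$, the hat function on $[0,2]$ (continuous, compactly supported, stable shifts), and $X=\Z$: then $f(k)=c_{k-1}$ for $f=\sum_\ell c_\ell T_\ell B_2$, so $X$ is a sampling set for every $V^p(B_2)$ with constants coming from stability alone; yet the fiber at $t$ is governed by the symbol $t+(1-t)e^{2\pi i\xi}$, whose lower bound $(2t-1)^2$ degenerates as $t\to\tfrac12$, so there is no uniform fiber bound, and indeed $\G(B_2,\Z\times\Z)$ is not a frame (the Zak transform of $B_2$ vanishes at $(\tfrac12,\tfrac12)$; this is the amalgam Balian--Low obstruction). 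Hence the implication (ii)$\Rightarrow$(i) cannot be obtained by your continuity transfer from the listed hypotheses alone; the fiber condition genuinely concerns \emph{all real translates} $t+X$, and bridging this gap is exactly the nontrivial content of the cited result, which works with weak limits under arbitrary (not merely integer) translations rather than a direct perturbation argument. The same example shows that the statement must be read with this translation-robust fiber condition in mind; note that the applications in Section~\ref{sec:gabor} only use the direction ``sampling $\Rightarrow$ frame'' for hypotheses ($\mg(X)<1$, or the margin-$\varepsilon$ Schoenberg--Whitney condition) that are invariant under translating $X$, which is precisely what makes that direction viable there.
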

The following corollary is a small extension of a result in \cite{AldroubiGroechenig2000} from B-splines to PEB-splines.
\begin{corollary}\label{cor:PEB_Gabor}
Let $\varphi$ be a PEB-spline of order $m\geq 2$. Assume $X\subseteq\R$ is a separated set satisfying the following property:

$(^*)$ There exist integers $M, L>0$ and $\varepsilon >0$, such that for every $k\in\Z$, there exist points $x_1^k< x_2^k<\dots <x^k_{L+{m-1}}$ in $X\cap I_{M,L}(k)$ with
\begin{equation} 
x_j^k\in [M+kL+j-m+\varepsilon, M+kL+j-\varepsilon].
\end{equation}
Then $\G(\varphi, (-X)\times\Z)$ is a Gabor frame.
In particular, if $X$ is separated and $\mg(X)<1$, then $\G(\varphi, X\times\Z)$ is a Gabor frame. 
\end{corollary}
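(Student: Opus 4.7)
The plan is to reduce the Gabor-frame statement to a sampling statement via Theorem~\ref{thm:Gabor_vs_Sampling} and then invoke the two main sampling theorems of the paper. First I will check that $\varphi$ satisfies the hypotheses of Theorem~\ref{thm:Gabor_vs_Sampling}: for $m\geq 2$ a PEB-spline lies in $C^{m-2}(\R)\subseteq C(\R)$, has compact support (so the decay condition $|\varphi(x)|\lesssim (1+|x|)^{-1-\varepsilon}$ is trivial), and by Lemma~\ref{lemma:PEB_stable_shifts}(i) it has stable integer shifts. Moreover if $X$ is separated then so is $-X$, with identical maximum gap and identical structural constants. Thus it suffices to verify in each case that the relevant set is a sampling set for $V^p(\varphi)$, and then feed the result into the equivalence (i)$\Leftrightarrow$(ii) of Theorem~\ref{thm:Gabor_vs_Sampling}.

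For the main claim under the structural hypothesis $(^*)$, I would apply Theorem~\ref{thm:main_compact} with the trivial multiplicity function $\mu_X\equiv 0$. Conditions (i) and (ii) of that theorem are then immediate. For condition (iii) I take the very same $M, L, \varepsilon$ as in $(^*)$, set $N(k) = L+m-1$, and $\d_j^k = 0$ for all $j,k$. Then \eqref{eq:thm_compact_cond3.1} is automatic, \eqref{eq:thm_compact_cond3.2} becomes $\sum_{j=1}^{L+m-1}(1+0) = L+m-1$, and \eqref{eq:thm_compact_cond3.3} reduces to
\begin{equation}
x_j^k \in X\cap \bigl([-m+\varepsilon,-\varepsilon] + M+kL + j\bigr),
\end{equation}
which is exactly the requirement in $(^*)$. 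Thus $X$ is a sampling set for $V^p(\varphi)$, and Theorem~\ref{thm:Gabor_vs_Sampling} applied with $-X$ in place of the sampling set yields that $\G(\varphi,(-X)\times\Z)$ is a Gabor frame.

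For the "in particular" clause I would bypass the technical condition $(^*)$ entirely and use Corollary~\ref{cor:max_gap_OV} (equivalently the constant-multiplicity case $s=0$ of the Maximum Gap Theorem~\ref{thm:max_gap}): a separated $X$ with $\mg(X)<1$ is a sampling set for $V^p(\varphi)$. Applying Theorem~\ref{thm:Gabor_vs_Sampling} to the separated set $-X$ (which still satisfies $\mg(-X)=\mg(X)<1$, hence is also sampling) gives that $\G(\varphi,-(-X)\times\Z) = \G(\varphi, X\times\Z)$ is a Gabor frame.

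There is really no substantial obstacle in this proof; it is a synthesis step. The only mild subtleties are matching the parameters of $(^*)$ with the slightly more general formulation in Theorem~\ref{thm:main_compact} (specifically checking that with $\mu_X\equiv 0$ the prescribed intervals collapse precisely to the ones given in $(^*)$), and keeping track of the sign flip between the sampling set and the translation component of the Gabor frame, so that the quoted statement appears with $(-X)\times\Z$ in the general case and with $X\times\Z$ in the corollary's special case.
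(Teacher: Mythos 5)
Your proof is correct and follows essentially the same route as the paper: verify stability of $\varphi$ via Lemma~\ref{lemma:PEB_stable_shifts}, apply Theorem~\ref{thm:main_compact} with $\mu_X\equiv 0$ for condition $(^*)$ and Corollary~\ref{cor:max_gap_OV} for the maximum-gap case, then transfer to Gabor frames via Theorem~\ref{thm:Gabor_vs_Sampling}. Your explicit check that \eqref{eq:thm_compact_cond3.3} collapses to $(^*)$ when all $\d_j^k=0$, and your careful handling of the sign flip by applying the gap criterion to $-X$ in the ``in particular'' clause, are slightly more detailed than the paper's terse argument but substantively identical.
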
 
\begin{proof}
We proved that $\varphi$ has stable integer shifts in the proof of Theorem \ref{thm:main_compact}. 
The sampling property of $X$ follows from Theorem \ref{thm:main_compact}, applied to $(X, 0)$, and Corollary \ref{cor:max_gap_OV}. The claim follows from Theorem \ref{thm:Gabor_vs_Sampling}. 
\end{proof}

A direct consequence of the last corollary is the lattice case.
\begin{corollary}  
Let $\varphi$ be a PEB-spline of order $m\geq 2$. Then   $\G(\varphi, \alpha\Z\times\Z)$ is a Gabor frame if and only if $0 <\alpha<1$. 
\end{corollary}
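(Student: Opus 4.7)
The plan is to reduce the Gabor frame question to a sampling question via Theorem \ref{thm:Gabor_vs_Sampling} and then apply the lattice case of Corollary \ref{allequal}. First I would check the hypotheses of Theorem \ref{thm:Gabor_vs_Sampling}: because $m\geq 2$ the PEB-spline $\varphi$ is continuous (in fact $\varphi\in C^{m-2}(\R)$) with support in $[0,m]$, so the decay requirement $|\varphi(x)|\lesssim(1+|x|)^{-1-\varepsilon}$ is immediate, and Lemma \ref{lemma:PEB_stable_shifts} supplies stability of the integer shifts. Since $\alpha\Z$ is a separated set with $-\alpha\Z=\alpha\Z$, the theorem delivers the equivalence that $\G(\varphi,\alpha\Z\times\Z)$ is a frame for $L^2(\R)$ if and only if $\alpha\Z$ is a sampling set for $V^p(\varphi)$ for some (equivalently every) $p\in[1,\infty]$.

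Second I would invoke Corollary \ref{allequal} with constant multiplicity $\mu_X\equiv s=0$, which is admissible since $m\geq 2$ forces $s<m-1$. The maximum gap of the lattice is $\mg(\alpha\Z)=\alpha$, and the corollary together with its accompanying sharpness discussion characterizes $\alpha\Z$ as a sampling set for $V^p(\varphi)$ precisely when $\alpha<1$. The sufficiency is the Maximum Gap Theorem applied to $\mu\equiv 0$; the necessity for $\alpha>1$ is the density obstruction $D^-(\alpha\Z)=1/\alpha<1$ against the Beurling bound in the necessary density theorem of the introduction, and the critical case $\alpha=1$ is covered by the optimality remark attached to Corollary \ref{allequal}.

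Chaining these two equivalences delivers the claim. Since every piece is assembled from previously established results, I anticipate no real obstacle; the proof is essentially a matter of verifying that the hypotheses of both Theorem \ref{thm:Gabor_vs_Sampling} and Corollary \ref{allequal} apply in the present setting (PEB-spline of order $m\geq 2$, lattice $\alpha\Z$ with zero multiplicity, $s=0<m-1$). The only conceptual point worth emphasizing is the symmetry $-\alpha\Z=\alpha\Z$, which lets one use the theorem in the form printed (with $-X$ rather than $X$ on the time side of the Gabor lattice) without any adaptation.
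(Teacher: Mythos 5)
Your reduction to sampling via Theorem~\ref{thm:Gabor_vs_Sampling} and your sufficiency argument for $0<\alpha<1$ are fine and essentially coincide with the paper's route (the paper goes through Corollary~\ref{cor:PEB_Gabor}, you through Corollary~\ref{allequal}; both rest on the maximum-gap results plus Theorem~\ref{thm:Gabor_vs_Sampling}). The same holds for $\alpha>1$, where the density obstruction $D^-(\alpha\Z)=1/\alpha<1$ rules out sampling and hence the frame property. The genuine gap is the critical case $\alpha=1$. The ``optimality remark'' attached to Corollary~\ref{allequal} does not give you what you need: for $\alpha=s+1$ its footnote only asserts that \emph{some} translate $x_0+(s+1)\Z$ fails to be sampling, not that $(s+1)\Z$ itself does. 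Worse, the statement your argument requires --- that $\Z$ is not a sampling set for $V^p(\varphi)$ --- is false for some PEB-splines: take the order-two B-spline $\varphi=B_2$ (the hat function, a PEB-spline with $w_1=w_2\equiv 1$, supported on $[0,2]$). Then $\varphi(1)=1$ and $\varphi(k)=0$ for all integers $k\neq 1$, so every $f=\sum_\ell c_\ell T_\ell\varphi$ satisfies $f(k)=c_{k-1}$, and $\Z$ is a sampling set for $V^p(\varphi)$ for every $p$. Consequently no argument on the sampling side can exclude $\alpha=1$; the exclusion must be proved on the Gabor side (this example also shows one should not push Theorem~\ref{thm:Gabor_vs_Sampling} at critical density, since otherwise it would produce a frame $\G(B_2,\Z\times\Z)$, contradicting Balian--Low).

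This is precisely where the paper argues differently: the necessity, in particular $\alpha=1$, is obtained from the Balian--Low theorem \cite{BenedettoEtAl1995}. A PEB-spline of order $m\geq 2$ is continuous and compactly supported, so $x\varphi(x)\in L^2(\R)$, and $\varphi'$ is of bounded variation, so $\widehat{\varphi}(\omega)=\mathcal{O}(|\omega|^{-2})$ and $\omega\widehat{\varphi}(\omega)\in L^2(\R)$; hence $\G(\varphi,\Z\times\Z)$ cannot be a frame, and a fortiori no $\G(\varphi,\alpha\Z\times\Z)$ with $\alpha\geq 1$ is (for $\alpha>1$ one may also use the density theorem for Gabor frames, or your sampling-density argument). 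So the repair is simple: keep your argument for $\alpha<1$ and $\alpha>1$, but replace the appeal to the optimality remark at $\alpha=1$ by the Balian--Low theorem.
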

\begin{proof} 
The sufficient part is due to Corollary \ref{cor:PEB_Gabor}. The necessary part is due to the Balian-Low theorem, see, e.g., ~\cite{ BenedettoEtAl1995}.
\end{proof} 

\section{Appendix}\label{sec:appendix} 
We sketch the proof of (an extended version of)  Corollary \ref{cor:sampling_weak_mult}. The only changes are required for the last, right-continuous $\varphi^{(s)}$. 
\begin{corollary}
Let $\varphi:\R\to \C$ be a compactly supported, piecewise continuous function with finitely many jump discontinuities $\mathcal{J}\subseteq \R$. Further, assume that $\varphi$ has stable integer translates. Let $X$ be a separated set satisfying
\begin{equation}
    \varepsilon \coloneqq \mathrm{dist} \big(X\,,\mathcal{J}+\Z\big)>0.
\end{equation}
Then the following statements are equivalent: 
\begin{enumerate}[(i)]
\item $X$ is a sampling set for $V^p(\varphi)$ for some $p\in[1,\infty]$.
\item $X$ is a sampling set for $V^p(\varphi)$ for all $p\in[1,\infty]$.
\item Every weak limit  $Y\in W_\Z(X)$ is a sampling set for $V^\infty (\varphi)$.
\item Every weak limit  $Y\in W_\Z(X)$ is a uniqueness set for $V^\infty(\varphi)$.
\end{enumerate}
\end{corollary}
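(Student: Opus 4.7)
The plan is to follow the weak-limit machinery of \cite[Thm.~3.1]{GroechenigEtAl2017} essentially verbatim, and only to verify that the jump discontinuities of $\varphi$ cause no harm thanks to the assumption $\varepsilon = \mathrm{dist}(X,\mathcal{J}+\Z) > 0$. The equivalence (i)~$\Leftrightarrow$~(ii) follows from the standard Wiener-amalgam argument (see \cite[Thm.~3.1(iii)]{AldroubiGroechenig2001} and \cite[Thm.~3.1]{GroechenigEtAl2017}) combined with Theorem~\ref{thm:upper_bound_W_jumps}; it uses only stable integer shifts and the decay/support of $\varphi$, but not continuity. The implication (iii)~$\Rightarrow$~(iv) is trivial. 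Hence the real content is (ii)~$\Rightarrow$~(iii) and (iv)~$\Rightarrow$~(i).

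The key observation that makes the argument work for piecewise continuous $\varphi$ is the following. Since $\mathcal{J}+\Z$ is $\Z$-invariant, every integer translate $X-k_n$ also satisfies $\mathrm{dist}(X-k_n,\mathcal{J}+\Z)\geq\varepsilon$, and by Proposition~\ref{prop:weak_limit_separated} this property persists in every weak limit $Y\in W_\Z(X)$. On the other hand, any $f = \sum_{\ell\in\Z} c_\ell T_\ell \varphi \in V^p(\varphi)$ is locally a finite sum of translates of $\varphi$, so by Theorem~\ref{thm:upper_bound_W_jumps}(i) its set of discontinuities is contained in $\mathcal{J}+\Z$. Consequently $f$ agrees with a genuinely continuous function on the $\varepsilon/2$-neighborhood of every $x\in X$, of every $x\in X-k_n$, and of every $y\in Y$. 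This is exactly the substitute for the continuity of $\varphi$ that the proof in \cite{GroechenigEtAl2017} relies on.

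With this in hand, the two non-trivial implications proceed as in \cite{GroechenigEtAl2017}. For (ii)~$\Rightarrow$~(iii): given $g=\sum d_\ell T_\ell \varphi \in V^\infty(\varphi)$ and a weak limit $X-k_n\to Y$, apply (ii) with $p=\infty$ to the translates $T_{k_n} g\in V^\infty(\varphi)$, and use Proposition~\ref{prop:weak_limit_separated} together with the distance condition to pass sampling points to the limit; the piecewise continuity is harmless because every sampled value of $T_{k_n}g$ sits inside a uniform continuity neighborhood of a sampling point of $X-k_n$. For (iv)~$\Rightarrow$~(i): suppose $X$ fails to be sampling for $V^2(\varphi)$, choose norm-one $f_n = \sum c^{(n)}_\ell T_\ell \varphi$ with $\sum_{x\in X}|f_n(x)|^2\to 0$, translate by integers $k_n$ so that $|c^{(n)}_{k_n}|$ is within a factor of the supremum, and extract a subsequence so that $c^{(n)}_{\cdot+k_n}\to d$ in $\ell^\infty$ weak-$\ast$ and $X-k_n\to Y$ weakly. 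The limit function $g=\sum d_\ell T_\ell \varphi\in V^\infty(\varphi)$ is nonzero by stability of the integer shifts, and the distance hypothesis together with the local equicontinuity of the shifted $f_n(\cdot -k_n)$ at points of $Y$ yields $g|_Y\equiv 0$, contradicting (iv).

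The main obstacle — the only place where the extension beyond \cite[Thm.~3.1]{GroechenigEtAl2017} requires care — is the pointwise-convergence step: when $x_n\in X-k_n$ tends to $y\in Y$, one needs $f_n(x_n)\to g(y)$ even though both $f_n$ and $g$ may have jump discontinuities. The hypothesis $\mathrm{dist}(X,\mathcal{J}+\Z)>0$, and its preservation under integer translates and weak limits, is exactly what freezes the sampling points into a uniform $\varepsilon$-neighborhood free of discontinuities, so that the approximation can be carried out modulus-wise. All other steps of the Beurling-style compactness argument are identical to the continuous case.
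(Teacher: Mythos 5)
Your identification of the key structural point --- that $\mathrm{dist}(X,\mathcal{J}+\Z)>0$ is inherited by the integer translates $X-k_n$ and by every weak limit $Y$, so that all relevant sample points sit in a fixed neighbourhood on which $\varphi$ can be replaced by a continuous function --- matches the paper's "decisive modification". However, your proof of (ii)$\Rightarrow$(iii) has a genuine gap. Applying the $\ell^\infty$-sampling inequality to $T_{k_n}g$ gives $\sup_{z\in X-k_n}|g(z)|\geq A\|g\|_\infty$, and hence near-extremal points $z_n\in X-k_n$; but nothing confines these points to a bounded window, and the weak convergence $X-k_n\overset{w}{\longrightarrow}Y$ controls $X-k_n$ only on compact sets. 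Since functions in $V^\infty(\varphi)$ do not decay, the large samples may escape to infinity, and you cannot conclude $\sup_{y\in Y}|g(y)|\geq A\|g\|_\infty$. This difficulty is not caused by the jump discontinuities at all; it is the reason the paper (following \cite{GroechenigEtAl2015,GroechenigEtAl2017}) argues by duality with the pre-Gramian $P_X(\varphi)=(\varphi(x-k))_{x\in X,k\in\Z}$: boundedness below on $\ell^\infty$ is converted into surjectivity of the pre-adjoint $P_X(\varphi)'\colon\ell^1(X)\to\ell^1(\Z)$ with norm control; for a fixed target $c\in\ell^1(\Z)$ one solves the shifted problems, encodes the solutions as measures $\mu_n=\sum_{z\in X-k_n}b^n_z\,\delta_z$ of uniformly bounded total variation, and passes to a weak-$*$ limit $\mu=\sum_{y\in Y}b_y\delta_y$. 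Surjectivity survives the limit because each coordinate $c_k$ is the pairing of $\mu_n$ with a fixed compactly supported \emph{continuous} test function --- and this is precisely where the distance hypothesis enters: one replaces $\varphi$ by a continuous $\varphi_0$ with $\varphi_0=\varphi$ off an $\varepsilon$-neighbourhood of $\mathcal{J}+\Z$, so that $\varphi_0(\cdot-k)$ agrees with $\varphi(\cdot-k)$ on $X-k_n$ and on $Y$. Your "pointwise continuity neighbourhood" idea is the right ingredient, but it must be inserted into this duality scheme, not into a pointwise limit of sampling inequalities.

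A secondary problem is your (iv)$\Rightarrow$(i) sketch at $p=2$: with the normalization $\|f_n\|_2=1$ the coefficient suprema $\|c^{(n)}\|_\infty$ may tend to $0$, in which case the weak-$*$ limit $d$ vanishes and no nonzero $g$ is produced; renormalizing by $\|c^{(n)}\|_\infty$ destroys the control on $\sum_{x\in X}|f_n(x)|^2$. Since you already have (i)$\Leftrightarrow$(ii) via the Wiener-type lemma, the compactness argument should be run at $p=\infty$ with $\|c^{(n)}\|_\infty=1$, exactly as in \cite{GroechenigEtAl2015}, again using the surrogate $\varphi_0$ when passing sample values to the limit; with that change this direction is fine.
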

\begin{proof}[Proof sketch]
The sampling property can be  expressed in terms of the pre-Gramian matrices
\begin{equation}
    P_X(\varphi)\coloneqq \left(\varphi(x-k)\right)_{x\in X,k\in \Z}.
\end{equation}
Precisely, $X$ is sampling for $V^p(\varphi )$ if and only if $P_X(\varphi )$ is bounded above and below on $\ell ^p(\Z )$. 
By Schur's test, $P_X:\ell^p(\Z)\to\ell^p(X)$ is a bounded linear operator for all $p\in[1,\infty]$. We make use of a non-commutative version of Wiener's Lemma \cite[Prop.~8.1]{GroechenigEtAl2015}.
\begin{proposition} 
Let $X, Y\subseteq \R$ be relatively separated and $A\in\C^{X\times Y}$ be a matrix such that ~\footnote{The decay conditions can be weakened, e.g., $\abs{A_{x,y}} \lesssim \Theta(x-y)$ for some continuous upper bound $\Theta$ in the Wiener-amalgam space $ W(\R)$.} 
\begin{equation}
    \abs{A_{x,y}} \lesssim (1+|x-y|)^{-1-\varepsilon}\qquad x\in X,y\in Y\qquad \text{for some }\varepsilon>0.
\end{equation}
The operator $A$ is bounded below on some $\ell^{p_0}(Y)$, $1\leq p_0\leq \infty$, i.e., $\norm{Ac}_{p_0}\geq C_0\norm{c}_{p_0}$ for all $c\in\ell^{p_o}(Y)$, if and only if $A$ is bounded below on \emph{all} $\ell^p(Y)$, $1\leq p\leq \infty$.
\end{proposition} 
Since $\varphi$ is supported on some compact set $K\subseteq\R$ and bounded,
\begin{equation}
    \big|\big( P_X(\varphi)\big)_{x,k} \big| \lesssim (1+|x-k|)^{-1-\varepsilon}\qquad x\in X,k\in \Z \qquad \text{for all }\varepsilon>0.
\end{equation}
This gives the equivalence $(i)\Leftrightarrow (ii)$. 

$(ii)\Rightarrow (iii)$ Let $Y\in W_\Z(X)$, i.e., $X-k_n\to Y$ weakly for some $(k_n)_{n\in\N}\subseteq \Z$. Since $X$ is a sampling set, $P_X(\varphi):\ell^\infty(\Z)\to \ell^\infty(X)$ is bounded from below, so its pre-adjoint $P_X(g)':\ell^1(X)\to \ell^1(\Z)$, given by
\begin{equation}
    P_X(\varphi)' = \left(\varphi(x-k)\right)_{k\in\Z,x\in X},
\end{equation}
is onto by the closed graph theorem.
Fix now $c\in\ell^1(\Z)$. For every sequence $(c_{k-k_n})_{k\in\Z}$ there exists a sequence $a^n\in\ell^1(X)$ with $\norm{a^n}_1\lesssim \norm{c}_1$ and 
\begin{equation}
    c_{k-k_n} = \sum\limits_{x\in X} a^n_x g(x-k),\qquad k\in\Z.
\end{equation}
The weak convergence can be rewritten by means of distributions. Set $b^n_{x-k_n} = a^n_x$ and $\mu_n\coloneqq \sum_{z\in X-k_n} b^n_z\, \delta_z$.
Then $\mu_n \in C_0^*(\R)$ has support in $X-k_n$ and satisfies
\begin{equation}
    \norm{\mu_n}_{C^*_0(\R)} = \norm{b^n}_1 = \norm{a^n}_1\lesssim \norm{c}_1.
\end{equation}
By restricting to a subsequence and using the $w^*$-completeness of $C^*_0(\R)$, $\mu_n$ converges to $\mu\in C^*_0(\R)$ in $\sigma(C_0^*(\R),C_0(\R))$. 
Since $X-k_n\to Y$, the weak limit $Y$ is relatively separated and $\supp(\mu)\subseteq Y$ (see \cite[Lem.~4.3 and 4.5]{GroechenigEtAl2015}). Furthermore, $\mu =\sum_{y\in Y}b_y \delta_y$ for suitable $b\in\ell^1(Y)$, 
\begin{equation}
    \norm{b}_1 = \norm{\mu} _{C^*_0(\R)}\lesssim \liminf\limits_{n\to\infty} \norm{\mu_n}_{C^*_0(\R)}.
\end{equation}
Here comes the decisive modification. 
By assumption, $\mathrm{dist}(X-k_n,\mathcal{J}+\Z) = \mathrm{dist}(X,\mathcal{J}+\Z) >0$ for all $n\in\Z$ and therefore $\mathrm{dist}(Y,\mathcal{J}+\Z)>0$. 
Now we replace the generator $\varphi$ (which has discontinuities $\mathcal{J}$) by a continuous compactly supported function $\varphi_0$ such that $\varphi = \varphi_0$ on $\R\setminus \big(\mathcal{J}+\Z+(-\varepsilon , \varepsilon )\big)$ for an $\varepsilon \in (0, \mathrm{dist}(X,\mathcal{J}+\Z))$. 
In particular, $\varphi = \varphi_0$ on $Y$ and on $X-k_n$ for all $n\in\N$. Hence we obtain
\begin{align}
    \sum\limits_{y\in Y} b_y \varphi(y-k) & = \int_\R \varphi_0(x-k) \,d\mu(x) = \lim\limits_{n\to\infty} \int_\R \varphi_0(x-k) \,d\mu_n(x)  \\
    & = \lim\limits_{n\to\infty} \sum\limits_{z\in X-k_n} b_z^n \varphi_0(z-k)= \lim\limits_{n\to\infty} \sum\limits_{z\in X-k_n} b_z^n \varphi(z-k) = c_k.
\end{align}

$(iii)\Rightarrow (iv)$ is trivial. To obtain $(iv)\Rightarrow (ii)$, one proceeds precisely as in \cite{GroechenigEtAl2015}, with the analogous modification with $\varphi _0$ when it comes to the limits. 
\end{proof}
The lift to sampling with multiplicities in Corollary \ref{cor:sampling_weak_mult} is the same as in \cite{GroechenigEtAl2019}.

\def\cprime{$'$} \def\cprime{$'$} \def\cprime{$'$} \def\cprime{$'$}
  \def\cprime{$'$} \def\cprime{$'$}

\end{document}